\def\CC{{\mathbb C}}
\def\NN{{\mathbb N}}
\def\PP{{\mathbb P}}
\def\RR{{\mathbb R}}
\def\ZZ{{\mathbb Z}}
\def\kk{{\Bbbk}}
\newcommand{\ccB}{{\mathcal{B}}}
\newcommand{\ccM}{{\mathcal{M}}}
\DeclareMathOperator{\codim}{codim}
\DeclareMathOperator{\Gr}{Gr}
\DeclareMathOperator{\Id}{Id}
\DeclareMathOperator{\rk}{rk}
\newcommand{\set}[1]{\left\{#1\right\}}
\newcommand{\fromto}[2]{#1, \dotsc, #2}
\newcommand{\setfromto}[2]{\set{\fromto{#1}{#2}}}
\newcommand{\reduced}[1]{{#1}_{\operatorname{red}}}
\numberwithin{equation}{section}
\newtheorem{thm}[equation]{Theorem}
\newtheorem{prop}[equation]{Proposition}
\newtheorem{lem}[equation]{Lemma}
\newtheorem{cor}[equation]{Corollary}
\newtheorem{problem}{Problem}
\newtheorem{example}[equation]{Example}
\newtheorem{rem}[equation]{Remark}
\newtheorem{defin}[equation]{Definition}
\newtheorem{notation}[equation]{Notation}
\newenvironment{prf}[1][]
  {\medskip\par\noindent{\bf Proof#1. }}
  {\nopagebreak\qed\par\smallskip}
\newcounter{betweenenumi}
\theoremstyle{plain}
\renewcommand{\theenumi}{(\roman{enumi})}
\title{On Strassen's rank additivity for small three-way tensors \thanks{Submitted to the editors
DATE.}}
\author{Jaros\L{}aw Buczy\'{n}ski\thanks{
   Institute of Mathematics of the Polish Academy of Sciences, ul.~\'Sniadeckich 8, 00-656 Warsaw, Poland,
 and Faculty of Mathematics, Computer Science and Mechanics, University of Warsaw, ul.~Banacha 2, 02-097 Warsaw, Poland (\email{jabu@mimuw.edu.pl})}\and 
 Elisa Postinghel\thanks{
Department of Mathematical Sciences, Loughborough University, Leicestershire LE11 3TU, United Kingdom (\email{e.postinghel@lboro.ac.uk})}\and 
Filip Rupniewski\thanks{ 
Institute of Mathematics of the Polish Academy of Sciences, ul.~\'Sniadeckich 8, 00-656 Warsaw, Poland (\email{f.rupniewski@impan.pl})}}
\DeclareMathOperator{\Seg}{Seg}
\date{}
\newcommand{\linspan}[1]{\left\langle#1\right\rangle}
\newcommand{\repletion}[1]{{}^{\Re} #1}
\newcommand{\gap}[1]{R(#1) - \dim (#1)}
\newcommand{\bfa}{\mathbf{a}}
\newcommand{\bfb}{\mathbf{b}}
\newcommand{\bfc}{\mathbf{c}}
\newcommand{\bfe}{\mathbf{e}}
\newcommand{\bff}{\mathbf{f}}
\newcommand{\Prime}{\ensuremath{\operatorname{Prime}}}
\newcommand{\Bis}{\ensuremath{\operatorname{Bis}}}
\newcommand{\VL}{\ensuremath{\operatorname{VL}}}
\newcommand{\VR}{\ensuremath{\operatorname{VR}}}
\newcommand{\HL}{\ensuremath{\operatorname{HL}}}
\newcommand{\HR}{\ensuremath{\operatorname{HR}}}
\newcommand{\Mix}{\ensuremath{\operatorname{Mix}}}
\newcommand{\prim}{\mathbf{prime}}
\newcommand{\bis}{\mathbf{bis}}
\newcommand{\vl}{\mathbf{vl}}
\newcommand{\vr}{\mathbf{vr}}
\newcommand{\hl}{\mathbf{hl}}
\newcommand{\hr}{\mathbf{hr}}
\newcommand{\mix}{\mathbf{mix}}
\newenvironment{red}{\color{red}}{}
\newcommand{\bred}{\begin{red}}
\newcommand{\ered}{\end{red}}
\newenvironment{blue}{\color{blue}}{}
\newcommand{\bblue}{\begin{blue}}
\newcommand{\eblue}{\end{blue}}
\newenvironment{green}{\color{green}}{}
\newcommand{\bgreen}{\begin{green}}
\newcommand{\egreen}{\end{green}}
\newcommand{\ts}{\otimes}
\newcommand{\br}{{\underline{R}}}
\DeclareMathOperator{\adj}{adj}
\begin{document}

\maketitle

\begin{abstract}
We address the problem of the additivity of the tensor rank.
That is, for two independent tensors we study if the rank of their direct sum is equal to the sum of their individual ranks.
A positive answer to this problem was previously known as Strassen's conjecture 
until recent counterexamples were proposed by Shitov. The latter are not very explicit, and they are only known to exist asymptotically for very large tensor spaces.
In this article we prove that for some small three-way tensors the additivity holds.
For instance, if the rank of one of the tensors is at most 6, then the additivity holds.
Or, if one of the tensors lives in $\CC^k\otimes \CC^3\otimes \CC^3$ for any $k$,
  then the additivity also holds.
More generally, if one of the tensors is concise and its rank is at most 2
more than the dimension of one of the linear spaces, then additivity holds.
In addition we also treat some cases of the additivity of the border rank of such tensors.
In particular, we show that the additivity of the border rank holds if the direct sum tensor is contained in 
   $\CC^4\otimes \CC^4\otimes \CC^4$.
Some of our results are valid over an arbitrary base field.
\end{abstract}

\medskip
{\footnotesize

\begin{keywords}
Tensor rank, additivity of tensor rank, Strassen's conjecture, slices of tensor, secant variety, border rank.
\end{keywords}
\begin{AMS}
Primary: 15A69, Secondary: 14M17, 68W30, 15A03.
\end{AMS}

\section{Introduction}\label{sect_intro}

The matrix multiplication is a bilinear map $\mu_{i,j,k}\colon \ccM^{i\times j} \times \ccM^{j\times k} \to \ccM^{i \times k}$, 
   where $\ccM^{l \times m}$ 
   is the linear space of $l\times m $ matrices with coefficients in a field $\kk$.
In particular, $\ccM^{l \times m} \simeq \kk^{l \cdot m}$,
   where $\simeq$ denotes an isomorphism of vector spaces.
   We can interpret $\mu_{i,j,k}$ as a three-way tensor 
\[
   \mu_{i,j,k}  \in (\ccM^{i \times j})^* \otimes (\ccM^{j \times k})^*\otimes \ccM^{i \times k}.
\]
Following the discoveries of Strassen \cite{strassen_gaussian_elimination_is_not_optimal}, scientists started to wonder
   what is the minimal number of multiplications required to calculate the product of two matrices $MN$, 
  for any $M\in \ccM^{i \times j}$ and $N \in \ccM^{j \times k}$.
This is a question about the \emph{tensor rank} of $\mu_{i,j,k}$.

Suppose $A$, $B$, and $C$ are finite dimensional vector spaces over $\kk$.
A \emph{simple tensor} is an element of the tensor space $A\otimes B \otimes C$ which can be written as $a\otimes b \otimes c$ for some $a\in A$, $b\in B$, $c\in C$.
The rank of a tensor $p\in A\otimes B \otimes C$ is the minimal number $R(p)$ of simple tensors needed, 
  such that $p$ can be expressed as a linear combination of simple tensors.
Thus $R(p)=0$ if and only if $p=0$, and $R(p)=1$ if and only if $p$ is a simple tensor.
In general, the higher the rank is, the more complicated $p$ ``tends'' to be.
In particular, the minimal number of multiplications needed to calculate $MN$ as above is equal to $R(\mu_{i,j,k})$. 
See for instance \cite{comon_tensor_decompositions_survey}, \cite{landsberg_tensorbook},
   \cite{carlini_grieve_oeding_four_lectures_on_secants} and references therein
   for more details and further motivations to study tensor rank.

Our main interest in this article is in the \emph{additivity} of the tensor rank.
Going on with the main example, given arbitrary four matrices 
  $M'\in \ccM^{i' \times j'}$, $N' \in \ccM^{j' \times k'}$, $M''\in \ccM^{i'' \times j''}$, $N'' \in \ccM^{j'' \times k''}$,
  suppose we want to calculate both products $M' N'$ and $M''N''$ simultaneously.
What is the minimal number of multiplications needed to obtain the result? 
Is it equal to the sum of the ranks $R(\mu_{i',j',k'}) + R(\mu_{i'',j'',k''})$?
More generally, the same question can be asked for  arbitrary tensors. 
If we are given two tensors in independent vector spaces, is the rank of their sum equal to the sum of their ranks?
A positive answer to this question was widely known as Strassen's Conjecture \cite[p.~194, \S4, Vermutung~3]{strassen_vermeidung_von_divisionen}, \cite[\S5.7]{landsberg_tensorbook}, 
until it was disproved by Shitov \cite{shitov_counter_example_to_Strassen}.

\begin{problem}[Strassen's additivity problem]\label{prob_strassen}
Suppose $A = A' \oplus A''$, $B = B' \oplus B''$, and $C = C' \oplus C''$, where all $\fromto{A}{C''}$ are finite dimensional vector spaces over a field $\kk$.
Pick $p' \in A' \otimes B' \otimes C'$ and $p'' \in A'' \otimes B'' \otimes C''$ 
  and let $p= p' + p''$, which we will write as $p= p'\oplus p''$.
Does the following equality hold
\begin{equation}\label{equ_additivity}
  R(p) = R(p') + R(p'')?
\end{equation}
\end{problem}

In this article we address several cases of Problem~\ref{prob_strassen} and its gen\-er\-al\-isa\-tions.
It is known that if one of the vector spaces $A'$, $A''$, $B'$, $B''$, $C'$, $C''$ is at most two dimensional,
    then the additivity of the tensor rank \eqref{equ_additivity} holds: 
    see \cite{jaja_takche_Strassen_conjecture} for the original proof
    and Section~\ref{sec_hook_shaped_spaces} for a discussion of more recent approaches.
One of our results includes the next case, that is, if say $\dim B''=\dim C'' = 3$, then \eqref{equ_additivity} holds. The following theorem summarises our main results. 

\begin{thm}\label{thm_additivity_rank_intro}
  Let $\kk$ be any base field 
     and let $A'$, $A''$, $B'$, $B''$, $C'$, $C''$ be vector spaces over $\kk$.
  Assume $p' \in A' \otimes B' \otimes C'$ and $p'' \in A'' \otimes B'' \otimes C''$ and let
  \[
      p = p' \oplus p'' \in (A'\oplus A'') \otimes (B'\oplus B'') \otimes(C'\oplus C'').
  \]
  If at least one of the following conditions holds, 
     then the additivity of the rank holds for $p$, that is, $R(p) = R(p') + R(p'')$:
  \begin{itemize}
   \item $\kk=\CC$ or $\kk=\RR$ (complex or real numbers) and $\dim B''\le 3$ and $\dim C'' \le 3$.
   \item $R(p'')\le \dim A'' +2$ and $p''$ is not contained in $\tilde{A''} \otimes B'' \otimes C''$
          for any linear subspace $\tilde{A''} \subsetneqq A''$
          (this part of the statement is valid for any field $\kk$).
   \item $\kk=\RR$ or $\kk$ 
            is an algebraically closed field of characteristic $\ne 2$ 
            and $R(p'')\le 6$. 
  \end{itemize}
  Analogous statements hold if we exchange the roles of $A$, $B$, $C$ and/or of ${}'$ and ${}''$. 
\end{thm}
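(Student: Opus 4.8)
The plan is to reduce the three cases of Theorem~\ref{thm_additivity_rank_intro} to a common framework based on the ``slice technique'': choosing one of the three tensor factors — say $B = B'\oplus B''$ — and viewing $p$ as a linear map $B^* \to A\otimes C$, so that $p$ is encoded by a pencil of slices $\{p_i\}$, a subspace of $A\otimes C = (A'\oplus A'')\otimes (C'\oplus C'')$. A rank decomposition of $p$ into $R(p)$ simple tensors then corresponds to a list of simple tensors in $A\otimes C$ whose span contains the slice space of $p$; the additivity statement $R(p)=R(p')+R(p'')$ is equivalent to saying no such list can be shorter than the obvious one built from separate decompositions of $p'$ and $p''$. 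I would first record the easy inequality $R(p)\le R(p')+R(p'')$ and the reduction to the case where $p'$ and $p''$ are both \emph{concise} (i.e.\ not supported on a proper subspace of any factor), since an optimal decomposition of $p$ restricted to the smaller spaces can only do better.

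For the second bullet — $R(p'')\le \dim A''+2$ with $p''$ concise in its $A''$-factor — I expect the core argument to be a direct combinatorial/linear-algebraic analysis of how a putative economical decomposition of $p=p'\oplus p''$ must interact with the block decomposition $A'\oplus A''$. One sorts the simple tensors $a_t\otimes b_t\otimes c_t$ of an optimal decomposition of $p$ according to where their $A$-components and $C$-components land relative to $A',A'',C',C''$, using the projections. The ``pure'' terms (those that after projection only see $p''$) must, since $p''$ is concise in $A''$, already span enough of $A''\otimes(\text{stuff})$; the conciseness bound $R(p'')\le \dim A''+2$ is exactly what is needed to control the ``mixed'' terms, so that one can surgically replace them and split the decomposition. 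This is the step I expect to be the main obstacle: making the exchange/substitution argument work uniformly, in particular handling the ``$+1$'' and ``$+2$'' cases beyond the classical ``$\dim A'' $'' (maximal rank of a concise tensor on the boundary), likely invoking the substitution lemma of Landsberg--Michałek type and a careful case split on the number of mixed terms. I anticipate this occupies the technical heart of the paper and relies on results classifying concise tensors of near-minimal rank.

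The first and third bullets should then follow by reduction to the second together with known low-dimensional classifications. For $\dim B''=\dim C''=3$: a concise tensor $p''\in A''\otimes B''\otimes C''$ with $\dim B''=\dim C''=3$ either has $\dim A''\le 3$ as well — where over $\CC$ or $\RR$ one uses the classification of tensors in $\kk^3\otimes\kk^3\otimes\kk^3$ and the known small cases — or $\dim A''\ge 4$, in which case conciseness forces a bound on $R(p'')$ (the maximal rank of a concise tensor in $\kk^a\otimes\kk^3\otimes\kk^3$ grows slowly with $a$) that brings us into the range $R(p'')\le \dim A''+2$, so the second bullet applies with the roles of the factors permuted. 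Similarly, for $R(p'')\le 6$ over $\RR$ or algebraically closed $\kk$ of characteristic $\ne 2$: a concise $p''$ of rank at most $6$ lives in $\kk^a\otimes\kk^b\otimes\kk^c$ with $a,b,c\le 6$, and one checks that in every such shape either $R(p'')\le \min(a,b,c)+2$ for some ordering of the factors (apply bullet two) or the shape is small enough ($\le \kk^3\otimes\kk^3\otimes\kk^3$ essentially, after using $\min\le 2$ cases handled classically by \cite{jaja_takche_Strassen_conjecture}) to be covered by explicit classification. The remaining work is bookkeeping: enumerating the finitely many relevant formats, and in each citing either the second bullet or a structural result on low-rank concise tensors, taking care that the characteristic-$2$ exclusion and the choice of field enter only through those classification inputs.
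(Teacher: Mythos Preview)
Your reductions for the first and third bullets to the second are essentially correct and match the paper: for $\dim B''=\dim C''=3$ one invokes the maximal-rank bound in $\kk^a\otimes\kk^3\otimes\kk^3$ from \cite{sumi_miyazaki_sakata_maximal_tensor_rank} to land in the ``$R(p'')\le\dim A''+2$'' regime, and for $R(p'')\le 6$ one runs through the finitely many concise formats, using JaJa--Takche when some dimension is $\le 2$ and the $3\times3\times3$ maximal-rank bound from \cite{bremner_hu_Kruskal_theorem} for the residual case. Good.

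The gap is in your treatment of the second bullet. You correctly propose to sort the simple tensors of a minimal decomposition by how their factors interact with the splittings, but you then describe the endgame as ``surgically replace [the mixed terms] and split the decomposition,'' and you anticipate needing a classification of concise tensors of near-minimal rank. Neither is how the paper proceeds, and I do not see how your sketch closes. The paper's mechanism is this: slice along $A$ to get $W=W'\oplus W''\subset B\otimes C$ with $\dim W''=\bfa''$, fix a minimal decomposition $V\supset W$, and define $E''\subset B''$, $F''\subset C''$ as the smallest subspaces so that the projections $\pi_{C''}(V)$, $\pi_{B''}(V)$ stay inside $(B'\oplus E'')\otimes C'$, $B'\otimes(C'\oplus F'')$. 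A short projection-and-dimension-count argument (Lemma~\ref{lemma_bound_r'_e'_R_w'}) gives $R(W')+\bfe''\le R(W)-\dim W''$, whence if additivity fails then $\bfe'',\bff''< R(W'')-\dim W''\le 2$, so $\bfe'',\bff''\le 1$. This is precisely why ``$+2$'' is the right threshold, and no classification is invoked.

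The second missing idea is how to finish once $\bfe'',\bff''\le 1$. The paper does not split the given decomposition; instead it runs a two-step reduction (``repletion'' then ``digestion'') that replaces $(W',W'')$ by a pair $(S',S'')$ with the same or smaller gap, whose minimal decomposition has no pure $B'\otimes C'$ or $B''\otimes C''$ terms, forcing $S''\subset E''\otimes C''+B''\otimes F''$ --- a $(1,1)$-hook shaped space. Additivity for such hook-shaped summands is then proved by an iterated application of the substitution lemma (your Landsberg--Micha{\l}ek ingredient), Proposition~\ref{prop_1_1_hook_shaped}. So the substitution method enters, but only after the $E'',F''$ bound and the repletion/digestion reduction have manufactured the hook shape; it is not the engine that handles the general mixed terms directly.
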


The theorem summarises the content of Theorems~\ref{thm_additivity_rank_plus_2}--\ref{thm_additivity_rank_6}
   proven in Section~\ref{sect_proofs_of_main_results_on_rank}.

\begin{rem}
   Although most of our arguments are characteristic free, 
     we partially rely on some earlier results which 
     often are proven only over the fields of the
     complex or the real numbers, or other special fields.
   Specifically, we use upper bounds on the maximal rank 
     of small tensors, 
     such as \cite{bremner_hu_Kruskal_theorem} or \cite{sumi_miyazaki_sakata_maximal_tensor_rank}.
   See Section~\ref{sect_proofs_of_main_results_on_rank}
     for a more detailed discussion.
   In particular, the consequence of the proof of
     Theorem~\ref{thm_additivity_rank_6} is that if
     (over any field $\kk$) there are  $p'$ and $p''$ 
     such that $R(p'')\le 6$ and $R(p'\oplus p'')< R(p')+R(p'')$, 
     then $p''\in \kk^3\otimes \kk^3 \otimes \kk^3$ and $R(p'')=6$.
   In \cite{bremner_hu_Kruskal_theorem}
     it is shown that if $\kk=\ZZ_2$ (the field with two elements),
     then such tensors $p''$ with $R(p'')=6$ exist.
\end{rem}

Some other cases of additivity were shown in \cite{feig_winograd_strassen}.
Another variant of Problem~\ref{prob_strassen} asks the same question in the setting of symmetric tensors 
   and the symmetric tensor rank, or equivalently, for homogeneous polynomials and their Waring rank.
No counterexamples to this version of the problem are yet known,
   while some partial positive results are described in 
   \cite{carlini_catalisano_chiantini_symmetric_Strassen_1},
   \cite{carlini_catalisano_chiantini_geramita_woo_symmetric_Strassen_2},
   \cite{carlini_catalisano_oneto_Strassen},
   \cite{casarotti_massarenti_mella_Comon_and_Strassen}, 
   and \cite{teitler_symmetric_strassen}.
   Possible ad hoc extensions to the symmetric case of the techniques and results obtained in this article are subject of a follow-up research.

Next we turn our attention to the \emph{border rank}. 
Roughly speaking, over the complex numbers, a tensor $p$ has border rank at most $r$, 
  if and only if it is a limit of tensors of rank at most $r$.
The border rank of $p$ is denoted by $\br(p)$.
One can pose the analogue of Problem~\ref{prob_strassen} for the border rank:
  for which tensors  $p' \in A' \otimes B' \otimes C'$ and $p'' \in A'' \otimes B'' \otimes C''$ 
  is the border rank additive, that is, $\br(p'\oplus p'') = \br(p')+\br(p'')$?
  
In general, the answer is negative; in fact there exist examples for which $\br(p'\oplus p'') <\br(p')+ \br (p'')$:
Sch\"onhage \cite{schonhage_matrix_multiplication} proposed a family of counterexamples amongst which the smallest is
\[
  \br(\mu_{2,1,3})=6,\quad \br(\mu_{1,2,1})=2, \quad \br(\mu_{2,1,3}\oplus \mu_{1,2,1})=7,
\]
see also \cite[\S11.2.2]{landsberg_tensorbook}.

Nevertheless, one may be interested in special cases of the problem. We describe one instance suggested by J.~Landsberg 
   (private communication, also mentioned during his lectures at Berkeley in 2014).
\begin{problem}[Landsberg]
   Suppose $A', B', C'$ are vector spaces and $A''\simeq B'' \simeq C'' \simeq \CC$. 
   Let $p'\in A'\otimes B' \otimes C'$ be any tensor and $p'' \in A''\otimes B'' \otimes C''$ be a non-zero tensor.
   Is $\br(p'\oplus p'') > \br(p')$?
\end{problem}

Another interesting question is what is the smallest counterexample to the additivity of the border rank?
The example of Sch\"onhage lives in $\CC^{2+2}\otimes \CC^{3+2}\otimes \CC^{6+1}$,
   that is, it requires using a seven dimesional vector space.
Here we show that if all three spaces $A$, $B$, $C$ have dimensions at most $4$, 
   then it is impossible to find a counterexample to the additivity of the border rank.
\begin{thm}\label{thm_additivity_border_rank_in_dimension_4}
   Suppose $A', A'', B', B'', C',C''$ are vector spaces over $\CC$ 
      and $A=A'\oplus A''$, $B=B'\oplus B''$, and $C=C'\oplus C''$.
   If $\dim A, \dim B, \dim C \le 4$, then 
      for any $p' \in A' \otimes B' \otimes C'$ and $p'' \in A'' \otimes B'' \otimes C''$ the 
      additivity of the border rank holds:
      \[
         \br(p'\oplus p'') = \br(p')+\br(p'').
      \]
\end{thm}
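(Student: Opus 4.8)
The bound $\br(p'\oplus p'')\le\br(p')+\br(p'')$ is clear (concatenate border-rank decompositions and pass to the limit), so the content is the reverse inequality, which I would prove by a reduction to a short list of configurations. Since the border rank of a tensor is unchanged when its ambient spaces are enlarged, I would first assume that $p'$ is concise in $A'\otimes B'\otimes C'$, that $p''$ is concise in $A''\otimes B''\otimes C''$, and that both are non-zero. Then $\br(p')\ge\max(\dim A',\dim B',\dim C')$, likewise for $p''$, whereas $\br(p'\oplus p'')$ is at least $\max(\dim A,\dim B,\dim C)$ and at most the generic border rank of $\CC^{\dim A}\otimes\CC^{\dim B}\otimes\CC^{\dim C}$. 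Since $\dim A,\dim B,\dim C\le 4$, the complementary summand forces each axis of $p'$ (and of $p''$) to have dimension at most $3$, and the conciseness inequalities $\dim A'\le\dim B'\dim C'$, etc., leave only finitely many sextuples $(\dim A',\dim A'',\dim B',\dim B'',\dim C',\dim C'')$; I would list them up to permuting the three tensor factors and swapping $'$ with $''$.

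For this list one uses that, among the admissible summand formats, the only balanced one is $\CC^3\otimes\CC^3\otimes\CC^3$, while every concise tensor in any of the other formats has border rank equal to its largest dimension (its largest dimension equals the generic border rank of that format). Hence in most configurations the trivial bound $\br(p'\oplus p'')\ge\max(\dim A,\dim B,\dim C)$ already equals $\br(p')+\br(p'')$ — namely whenever some factor realizes the largest dimension of both $p'$ and $p''$. The configurations where this fails are, up to symmetry, exactly: (a)~$p=q'\oplus q''$ with $q'$ an invertible $2\times 2$ matrix regarded in $\CC^1\otimes\CC^2\otimes\CC^2$ (so $\br(q')=2$) and $q''$ concise in $\CC^3\otimes\CC^2\otimes\CC^2$ (so $\br(q'')=3$), with $p\in\CC^4\otimes\CC^4\otimes\CC^4$ and the claim $\br(p)=5$; and (b), (c)~$p=q\oplus r$ with $q$ concise in $\CC^3\otimes\CC^3\otimes\CC^3$ of border rank $4$, respectively $5$, and $r$ of rank one in a fresh $\CC^1\otimes\CC^1\otimes\CC^1$, with $p\in\CC^4\otimes\CC^4\otimes\CC^4$ and the claims $\br(p)=5$, respectively $\br(p)=6$.

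Case~(a) I would settle with Strassen's commutation equations. The tensor $p$ is $1$-generic in the $\CC^4=A$ direction: a three-dimensional space of $2\times 2$ matrices always contains an invertible element, so the pencil of $A$-slices coming from $q''$ does, and together with the invertible slice of $q'$ it yields an invertible slice of $p$; normalizing that slice to the identity, the space of $A$-slices of $p$ contains, in a $2\times2$ diagonal block, the three linearly independent $2\times2$ slices of $q''$. If $\br(p)$ were $4$, Strassen's equations would force these normalized slices to commute, but a three-dimensional space of $2\times2$ matrices never commutes; hence $\br(p)\ge 5$, and $\br(p)=5$ by the upper bound. Cases~(b) and~(c) go the same way whenever $q$ is $1$-generic in some direction — the $3\times3$ slices of $q$ already fail to commute since $\br(q)>3$, so $p\notin\sigma_4(\Seg(\PP^3\times\PP^3\times\PP^3))$ — which disposes of~(b); for the remaining ($1$-degenerate) tensors $q$, and for the stronger bound $\br(p)\ge 6$ needed in~(c), I would instead apply explicit border-rank lower bounds — Koszul (Landsberg--Ottaviani) flattenings in the $\CC^4$ direction, which in $\CC^4\otimes\CC^4\otimes\CC^4$ certify border ranks as large as $6$ — checked against the classification of $\CC^3\otimes\CC^3\otimes\CC^3$ tensors over $\CC$, whose orbit normal forms and border ranks are tabulated.

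The step I expect to be the crux is precisely this last one: certifying that $q\oplus r$ avoids the fifth secant variety of $\Seg(\PP^3\times\PP^3\times\PP^3)$ when $q$ is a $1$-degenerate concise $\CC^3\otimes\CC^3\otimes\CC^3$ tensor of maximal border rank $5$. Here the dimension count gives nothing, and Strassen's equations are equations only for $\sigma_4$, so one must push through sharper equations for $\sigma_5$ (Koszul flattenings, or Strassen-type equations after a change of basis) or carry out a finite verification against the $3\times3\times3$ normal forms; obtaining a clean uniform argument for this sub-case is the main obstacle.
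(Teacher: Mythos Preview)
Your reduction to the three residual configurations is correct and matches the paper's: after conciseness and the max-dimension bound, only $(3,2,2)\oplus(1,2,2)$ and $(3,3,3)\oplus(1,1,1)$ with $\br(p')\in\{4,5\}$ survive (up to symmetry). Your treatment of case~(a) via $1$-genericity and Strassen's commutation is valid and in fact a bit slicker than the paper, which instead fixes a normal form for the $\CC^3\otimes\CC^2\otimes\CC^2$ summand and computes the rank of the $M_4$-flattening directly. For case~(b) the paper gives a uniform argument that avoids any $1$-genericity hypothesis: if $\br(p)\le 4$ then Strassen's adjoint relation $f_4(x',y'+y'',z')=0$ with $y''\ne0$ forces $f_3(x',y',z')=0$ on the $3\times3$ slices of $q$, hence $\br(q)\le3$, a contradiction. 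This is cleaner than splitting into $1$-generic and $1$-degenerate subcases.

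The genuine gap is case~(c). Your proposed tool, the Koszul flattening $p^\wedge_A$ in the $\CC^4$ direction, does \emph{not} certify $\br(p)\ge6$ for $p=q\oplus r$. The $4\times4$ slices are block-diagonal, and after reordering rows and columns the $16\times24$ matrix $M_4(w_1,w_2,w_3,w_4)$ itself becomes block-diagonal with blocks of ranks $\rk M_3(w_1',w_2',w_3')+3=12$ and $3$; the total rank is exactly $15$, regardless of which $\br(q)=5$ orbit you pick. Since the bound from $p^\wedge_A$ is $\br(p)\ge\lceil\rk/3\rceil$, you only obtain $\br(p)\ge5$. No case-check against the $3\times3\times3$ classification will help, because the obstruction is structural: the fourth block-row of $M_4$ sees only $w_1,w_2,w_3$, whose $r$-parts vanish. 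The paper's fix is a projection trick: map $A=\CC^4$ onto $A'=\CC^3$ by $a_4\mapsto a_1+a_2+a_3$, so that the projected tensor $\bar p$ has the three slices $\bar w_i=\left(\begin{smallmatrix}w_i'&0\\0&\alpha\end{smallmatrix}\right)$; now the $\CC^3$-Koszul flattening $M_3(\bar w_1,\bar w_2,\bar w_3)$ has rank $9+2=11$, giving $\br(p)\ge\br(\bar p)\ge\lceil11/2\rceil=6$. This projection is the missing idea in your sketch.
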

We prove the theorem in Section~\ref{sect_border_rank} 
   as Corollary~\ref{cor_additivity_brank_very_small_a_b_c}, Propositions~\ref{prop_br_case_3_2_2_1_2_2} 
   and~\ref{prop_br_case_3_3_3_1_1_1}, which in fact cover a wider variety of cases.

\subsection{Overview}
 
In this article, for the sake of simplicity, we mostly restrict our presentation to the case of three-way tensors,
  even though some intermediate results hold more generally.
In Section~\ref{sec_basics} we introduce the notation and review known methods about tensors in general.
We review the translation of the rank and border rank of three-way tensors into statements about linear spaces of matrices. 
In Proposition~\ref{prop_bound_on_rank_for_non_concise_decompositions_for_vector_spaces} we explain that any decomposition that uses elements outside of the minimal tensor space containing a given tensor must involve more terms
than the rank of that tensor. 
In Section~\ref{sect_dir_sums} we present the notation related to the direct sum tensors 
   and we prove the first results on the additivity of the tensor rank.
In particular, we slightly generalise the proof of the additivity of the rank when one of 
   the tensor spaces has dimension $2$.
In Section~\ref{sec_rank_one_matrices_and_additive_rank} we analyse 
   rank one matrices contributing to the minimal decompositions of tensors, 
   and we distinguish seven types of such matrices. 
Then we show that to prove the additivity of the tensor rank one can get rid of two of those types, 
   that is, we can produce a smaller example, which does not have these two types, 
   but if the additivity holds for the smaller one, then it also holds for the original one.
This is the core observation to prove the main result, Theorem~\ref{thm_additivity_rank_intro}.
Finally, in Section~\ref{sect_border_rank} we analyse the additivity of the border rank for small tensor spaces. 
For most of the possible splittings of the triple $A=\CC^4 =A'\oplus A''$, 
  $B=\CC^4 =B'\oplus B''$, $C=\CC^4 =C'\oplus C''$, 
  there is an easy observation (Corollary~\ref{cor_additivity_brank_very_small_a_b_c}) 
  proving the additivity of the border rank.
The remaining two pairs of triples are treated by more advanced methods,
  involving in particular the Strassen type equations for secant varieties.
We conclude the article with a brief discussion of the potential analogue of 
  Theorem~\ref{thm_additivity_border_rank_in_dimension_4} for 
  $A=B=C=\CC^5$.

\section{Ranks and slices}\label{sec_basics}

This section reviews the notions of rank, border rank, slices, conciseness. Readers that are familiar to these concepts may easily skip this section.
The main things to remember from here are Notation~\ref{notation_V_Seg} and 
  Proposition~\ref{prop_bound_on_rank_for_non_concise_decompositions_for_vector_spaces}.

Let $\fromto{A_1, A_2}{A_d}$, $A$, $B$, $C$, and $V$ be finite dimensional vector spaces over 
a field $\kk$.
Recall a tensor $s\in  A_1 \otimes A_2 \otimes \dotsb \otimes A_d$ is \emph{simple} if and only if it can be written as $a_1 \otimes a_2 \otimes \dotsb\otimes a_d$ 
   with $a_i \in A_i$. Simple tensors will also be referred to as \emph{rank one tensors} throughout this paper. 
If $P$ is a subset of $V$,  we denote by $\langle P \rangle$ its linear span.
If $P=\setfromto{p_1}{p_r}$ is a finite subset,  we will write
  $\langle\fromto{p_1}{p_r}\rangle$ rather than  $\langle\setfromto{p_1}{p_r}\rangle$ to simplify notation.
   
\begin{defin}
   Suppose $W\subset A_1 \otimes A_2 \otimes \dotsb \otimes A_d$ is a linear subspace of the tensor product space.
   We define $R(W)$, \emph{the rank} of $W$, to be the minimal number $r$, such that there exist simple tensors $\fromto{s_1}{s_r}$ 
     with $W$ contained in $\langle\fromto{s_1}{s_r}\rangle$.
   For $p \in A_1 \otimes \dotsb \otimes A_d$, we write $R(p):= R(\langle p\rangle)$.
\end{defin}

In the setting of the definition, if $d=1$, then $R(W)= \dim W$.
If $d=2$ and $W = \langle p \rangle$ is $1$-dimensional, then $R(W)$ is the rank of $p$ viewed as a linear map $A_1^*\to A_2$.
If $d=3$ and $W = \langle p \rangle$ is $1$-dimensional, then $R(W)$ is equal to $R(p)$ in the sense of Section~\ref{sect_intro}.
More generally, for arbitrary $d$, one can relate the rank $R(p)$ of $d$-way tensors with the rank $R(W)$ of certain linear subspaces in the space of $(d-1)$-way tensors. 
This relation is based on the \emph{slice technique}, which we are going to review in Section~\ref{section_slices}.

\subsection{Variety of simple tensors}\label{sect_variety_of_simple_tensors}

As it is clear from the definition, the rank of a tensor does not depend on the non-zero rescalings of $p$. 
Thus it is natural and customary to consider the rank as a function on the projective space $\PP(A_1 \otimes A_2 \otimes \dotsb \otimes A_d)$.
There the set of simple tensors is naturally isomorphic to the cartesian product of projective spaces.
Its embedding in the tensor space is also called the \emph{Segre variety}: 
\[
  \Seg=\Seg_{A_1, A_2, \dotsc, A_d} := \PP A_1 \times \PP A_2 \times \dotsb \times \PP A_d \subset \PP(A_1 \otimes A_2 \otimes \dotsb \otimes A_d).
\]

We will intersect linear subspaces of the tensor space with the Segre variety.
Using the language of algebraic geometry, such intersection may have a non-trivial scheme structure.
In this article we just ignore the scheme structure and all  our intersections are set theoretic.
To avoid ambiguity of notation,
we write $\reduced{(\cdot)}$ to underline this issue, while
the reader not originating from algebraic geometry should ignore the symbol $\reduced{(\cdot)}$.

\begin{notation}\label{notation_V_Seg}
  Given a linear subspace of a tensor space, $V \subset A_1 \otimes A_2 \otimes \dotsb \otimes A_d$,
    we denote:
\[
  V_{\Seg}:=\reduced{(\PP V  \cap \Seg)}. 
\]
Thus $V_{\Seg}$ is (up to projectivisation) 
   the set of rank one tensors in $V$.
\end{notation}

In this setting, we have the following trivial rephrasing of the definition of rank:
\begin{prop}\label{prop_can_choose_decomposition_containing_simple_tensors_from_W}
   Suppose $W \subset A_1 \otimes A_2 \otimes \dotsb \otimes A_d$ is a linear subspace.
   Then $R(W)$ is equal to the minimal number $r$, such that there exists a linear subspace $V \subset A_1 \otimes A_2 \otimes \dotsb \otimes A_d$ of dimension $r$
     with $W \subset V$ and $\PP V $ is linearly spanned by 
     $V_{\Seg}$.
   In particular,
   \begin{enumerate}
      \item $R(W) = \dim W$ if and only if  
            \[
               \PP  W = \langle W_{\Seg} \rangle.
            \]     
      \item Let $U$ be the linear subspace such that $\PP U :=\langle W_{\Seg} \rangle$. 
            Then $\dim U$ tensors from $W$ can be used in the minimal decomposition of $W$, 
              that is, there exist $\fromto{s_1}{s_{\dim U}}\in W_{\Seg}$ such that $W\subset \langle \fromto{s_1}{s_{R(W)}} \rangle$ and $s_i$ are simple tensors.
   \end{enumerate}
\end{prop}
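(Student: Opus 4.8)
The plan is to unwind the definition of $R(W)$ and then apply the Steinitz exchange lemma to the linear span of $V_{\Seg}$; the whole argument is essentially bookkeeping with bases.

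First I would establish the main equality by proving the two inequalities between $R(W)$ and $r_{\min}:=\min\set{\dim V : W\subset V,\ \PP V=\langle V_{\Seg}\rangle}$ (the minimum is attained since $V=A_1\otimes\dots\otimes A_d$ itself satisfies the condition, its projectivisation being spanned by the standard basis tensors, which are simple). For $r_{\min}\le R(W)$: pick simple tensors $s_1,\dots,s_{R(W)}$ with $W\subset\langle s_1,\dots,s_{R(W)}\rangle$; by minimality of $R(W)$ they must be linearly independent, for if one lay in the span of the others we would obtain a decomposition with fewer simple summands. Hence $V:=\langle s_1,\dots,s_{R(W)}\rangle$ has dimension $R(W)$, contains $W$, and satisfies $\PP V\subseteq\langle V_{\Seg}\rangle\subseteq\PP V$ because each $[s_i]\in V_{\Seg}$. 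For $R(W)\le r_{\min}$: given an admissible $V$, the affine cone over $V_{\Seg}$ spans the finite-dimensional space $V$, so finitely many, and then exactly $\dim V$, elements of $V_{\Seg}$ form a basis of $V$; these are simple tensors whose span contains $W$. This computation also records $R(W)\ge\dim W$, since $W\subset\langle s_1,\dots,s_r\rangle$ forces $\dim W\le r$.

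Part (i) is then immediate: if $R(W)=\dim W$, the optimal $V$ above must equal $W$, so $\PP W=\langle W_{\Seg}\rangle$; conversely, if $\PP W=\langle W_{\Seg}\rangle$ then $V=W$ is admissible, giving $R(W)\le\dim W\le R(W)$. For part (ii), let $U$ be the subspace with $\PP U=\langle W_{\Seg}\rangle$. Since $W_{\Seg}$ consists of simple tensors contained in $W$, I can extract from it a basis $s_1,\dots,s_{\dim U}\in W_{\Seg}$ of $U$. Now take any minimal decomposition $W\subset\langle t_1,\dots,t_{R(W)}\rangle=:V$ with the $t_i$ simple and, as above, linearly independent, so $\dim V=R(W)$. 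Every element of $W_{\Seg}$ lies in $W\subset V$, hence $U\subset V$ and in particular $s_1,\dots,s_{\dim U}\in V$. Because $V$ is spanned by the affine cone over $V_{\Seg}$ (it is spanned by the $t_i\in V_{\Seg}$), the Steinitz exchange lemma completes the independent set $s_1,\dots,s_{\dim U}$ to a basis $s_1,\dots,s_{R(W)}$ of $V$ by adjoining further elements $s_{\dim U+1},\dots,s_{R(W)}\in V_{\Seg}$, i.e. further simple tensors. Then $W\subset V=\langle s_1,\dots,s_{R(W)}\rangle$, which is the assertion.

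I do not expect a genuine obstacle here; the only point that needs a little care is the routine translation between the projective formulation ``$\PP V$ is spanned by $V_{\Seg}$'' and the linear-algebra statement ``$V$ is spanned by the affine cone over $V_{\Seg}$'', together with the harmless observation that the set-theoretic reduced intersection $\reduced{(\PP V\cap\Seg)}$ already contains the classes of all simple tensors lying in $V$, so the $\reduced{(\cdot)}$ decoration plays no role.
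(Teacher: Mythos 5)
The paper states this proposition as a ``trivial rephrasing of the definition of rank'' and offers no proof at all, so there is nothing to compare against directly. Your argument is correct and spells out exactly what the authors regard as implicit: the two inequalities for the main equality are precisely the translation between ``$\PP V$ spanned by $V_{\Seg}$'' and ``$V$ spanned by simple tensors''; part~(i) falls out immediately; and part~(ii) is handled cleanly by Steinitz exchange applied to the independent set of simple tensors extracted from $W_{\Seg}$ against the spanning set $\setfromto{t_1}{t_{R(W)}}\subset V_{\Seg}$ coming from a minimal decomposition. One very small remark: in the direction $r_{\min}\le R(W)$ the linear-independence observation is not actually needed (even if the $s_i$ were dependent one would get $\dim V\le R(W)$, hence $r_{\min}\le R(W)$), though it costs nothing and you reuse it later; and the inequality $R(W)\ge\dim W$ that you record in passing is exactly what makes the ``only if'' direction of~(i) work. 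No gaps.
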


\subsection{Secant varieties and border rank}

For this subsection (and also in Section~\ref{sect_border_rank}) we assume $\kk=\CC$.
 See Remark~\ref{rem_secants_other_fields} for generalisations.
   
In general, the set of tensors of rank at most $r$ is neither open nor closed.
One of the very few exceptions is the case of matrices, that is, tensors in $A\otimes B$.
Instead, one defines the secant variety 
$\sigma_r(\Seg_{\fromto{A_1}{A_d}}) \subset \PP(A_1\otimes \dotsb \otimes A_d)$ as:
\[
   \sigma_r = \sigma_r(\Seg_{\fromto{A_1}{A_d}}) := \overline{\set{p \in \PP(A_1\otimes \dotsb \otimes A_d) \mid  R(p) \le r}}.
\]
The overline $\overline{\set{\cdot}}$ denotes the closure in the Zariski topology. However
  in this definition, the resulting set coincides with the Euclidean closure.
This is a classically studied algebraic variety
    \cite{adlandsvik_joins_and_higher_secant_varieties}, \cite{palatini_secant_varieties}, \cite{zak_tangents},        
   and leads to a definition of border rank of a point.
\begin{defin}\label{def_br_point}
   For $p \in A_1 \otimes A_2 \otimes \dotsb \otimes A_d$
      define $\br(p)$, \emph{the border rank} of $p$,
      to be the minimal number $r$, such that $\linspan{p} \in \sigma_r(\Seg_{\fromto{A_1}{A_d}})$,
      where $\linspan{p}$ is the underlying point of $p$ in the projective space.
   We follow the standard convention that $\br(p) = 0$ if and only if $p=0$.
\end{defin}
Analogously we can give the same definitions for linear subspaces.
Fix $\fromto{A_1}{A_d}$ and an integer $k$. 
Denote by $\Gr(k,A_1\otimes \dotsb \otimes A_d)$ 
   the Grassmannian of $k$-dimensional linear subspaces of the vector space $A_1\otimes \dotsb \otimes A_d$.
Let $\sigma_{r,k}(\Seg) \subset \Gr(k,A_1\otimes \dotsb \otimes A_d)$ be the \emph{Grassmann secant variety}
   \cite{landsberg_jabu_ranks_of_tensors},
   \cite{chiantini_coppens_Grassmannians_of_secant_varieties}, 
   \cite{ciliberto_cools_Grassmann_secant_extremal_varieties}:
\[
   \sigma_{r,k}(\Seg):=\overline{\set{W \in \Gr(k,A_1\otimes \dotsb \otimes A_d) \mid R(W) \le r}}.
\]
\begin{defin}\label{def_br_linear_space}
   For $W \subset A_1 \otimes A_2 \otimes \dotsb \otimes A_d$, a linear subspace of dimension $k$,  
      define $\br(W)$, \emph{the border rank} of $W$,
      to be the minimal number $r$, such that $W \in \sigma_{r,k}(\Seg_{\fromto{A_1}{A_d}})$.
\end{defin}

In particular, if $k =1$, then Definition~\ref{def_br_linear_space} coincides with Definition~\ref{def_br_point}:
   $\br(p) = \br (\linspan{p})$.
An important consequence of the definitions of border rank of a point or of a linear space is that  it is a semicontinuous function 
\[
   \br \colon \Gr(k,A_1\otimes \dotsb \otimes A_d)\to \NN 
\]
for every $k$. Moreover, $\br(p)= 1$ if and only if $\linspan{p}\in Seg$.

\begin{rem}\label{rem_secants_other_fields}
   When treating the border rank and secant varieties  we assume the base field is $\kk=\CC$.
   However, the results of \cite[\S 6, Prop.~6.11]{jabu_jeliesiejew_finite_schemes_and_secants} 
     imply (roughly) that anything that we can say about a secant variety over $\CC$,
     we can also say about the same secant variety over any field $\kk$ of characteristic~$0$. 
   In particular, the same results for border rank over an algebraically closed field $\kk$ will be true.
   If $\kk$ is not algebraically closed, then the definition of border rank above might not generalise 
      immediately, as there might be a difference between the closure in the Zariski topology 
      or in some other topology, the latter being  the Euclidean topology in the case $\kk=\RR$. 
   
\end{rem}

\subsection{Independence of the rank of the ambient space}

As defined above, the notions of rank and border rank of a vector subspace  $W \subset A_1\otimes A_2 \otimes\dotsb \otimes A_d$,
   or of a tensor $p \in A_1 \otimes\dotsb \otimes A_d$, might seem to  depend on the ambient spaces $A_i$.
However, it is well known that the rank is actually independent of the choice of the vector spaces.
We first recall this result for tensors, then we apply the slice technique to show it in general.
\begin{lem}[{\cite[Prop.~3.1.3.1]{landsberg_tensorbook} and \cite[Cor.~2.2]{landsberg_jabu_ranks_of_tensors}}]\label{lem_rank_independent_of_ambient}
   Suppose $\kk=\CC$ and $p \in A_1' \otimes A_2' \otimes \dotsb \otimes A_d'$ for some linear subspaces $A_i'\subset A_i$.
   Then $R(p)$ (respectively, $\br(p)$)
      measured as the rank (respectively, the border rank) in $A_1' \otimes \dotsb \otimes A_d'$ is equal
      to the rank (respectively, the border rank) measured in $A_1 \otimes \dotsb \otimes A_d$.
\end{lem}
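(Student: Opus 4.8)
The plan is to reduce the statement for general $d$ to the two-way case, where it is classical, by means of the slice technique. First I would observe that the inequality $R_{A_1\otimes\dotsb\otimes A_d}(p)\le R_{A_1'\otimes\dotsb\otimes A_d'}(p)$ is trivial: any decomposition of $p$ into simple tensors lying in $A_1'\otimes\dotsb\otimes A_d'$ is \emph{a fortiori} a decomposition into simple tensors of $A_1\otimes\dotsb\otimes A_d$, since $a_i'\in A_i'\subset A_i$. The same remark applies to border rank, because $\sigma_{r}(\Seg_{A_1',\dotsc,A_d'})$ maps into $\sigma_{r}(\Seg_{A_1,\dotsc,A_d})$ under the linear inclusion, and closures are preserved. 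So the content of the lemma is the reverse inequality: a decomposition (or a limit of decompositions) that is allowed to ``stick out'' of the small tensor space is never shorter than one confined to it.

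The key step is to handle this reverse inequality one factor at a time, so it suffices to treat the case where $A_i'=A_i$ for $i\ge 2$ and only $A_1'\subsetneq A_1$ is a proper subspace; the general case follows by iterating over the $d$ factors. Fix a linear projection $\pi\colon A_1\twoheadrightarrow A_1'$ that is the identity on $A_1'$ (for border rank over $\CC$ one may, but need not, choose $\pi$ compatibly with the decomposition; a linear retraction is all that is required). Then $\pi\otimes\id\otimes\dotsb\otimes\id$ is a linear map $A_1\otimes\dotsb\otimes A_d\to A_1'\otimes A_2\otimes\dotsb\otimes A_d$ which fixes $p$ and which sends simple tensors to simple tensors (or to $0$). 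Applying it to a rank decomposition $p=\sum_{j=1}^{r} s_j$ with $s_j$ simple in $A_1\otimes\dotsb\otimes A_d$ yields $p=\sum_{j=1}^{r}(\pi\otimes\id\otimes\dotsb)(s_j)$, a decomposition of $p$ of length at most $r$ by simple tensors now lying in $A_1'\otimes A_2\otimes\dotsb\otimes A_d$; hence $R_{A_1'\otimes\dotsb}(p)\le R_{A_1\otimes\dotsb}(p)$. For the border rank, the same linear map is continuous (and algebraic), so it carries $\sigma_r(\Seg_{A_1,\dotsc,A_d})$ into $\sigma_r(\Seg_{A_1',A_2,\dotsc,A_d})$ and fixes $\linspan{p}$, giving $\br_{A_1'\otimes\dotsb}(p)\le\br_{A_1\otimes\dotsb}(p)$. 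Iterating the projection over all $d$ factors completes the argument, and this is exactly where the slice point of view enters: viewing a $d$-way tensor through one of its factors as a family of $(d-1)$-way tensors is nothing but the observation that such coordinate projections act on decompositions.

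The main obstacle, such as it is, is purely a matter of bookkeeping in the border-rank half: one must be sure that the projection map genuinely sends the Zariski-closed set $\sigma_r(\Seg_{A_1,\dotsc,A_d})$ into (the Zariski closure defining) $\sigma_r(\Seg_{A_1',\dotsc,A_d})$, rather than merely sending rank-$\le r$ points to rank-$\le r$ points. This is immediate because the image of the set $\{R\le r\}$ under a morphism lands in $\{R\le r\}$ of the target, and the image of its closure lands in the closure of its image; no new ideas are needed. Over fields other than $\CC$ the rank statement is unaffected (the projection argument is characteristic-free and needs no algebraic closure), and the border-rank statement follows from Remark~\ref{rem_secants_other_fields}; for the cited references, \cite[Prop.~3.1.3.1]{landsberg_tensorbook} treats the $d$-way rank directly, while \cite[Cor.~2.2]{landsberg_jabu_ranks_of_tensors} records the border-rank version, so the lemma may also simply be quoted.
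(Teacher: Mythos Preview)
Your argument is correct and is essentially the standard projection proof that the cited references give; the paper itself does not prove this lemma but quotes it, and the paper's proof of the next, stronger, Lemma~\ref{lem_bound_on_rank_for_non_concise_decompositions} uses exactly the same retraction-onto-$A'$ idea you employ. One small cosmetic point: your opening sentence promises a reduction ``to the two-way case \dots\ by means of the slice technique,'' but the proof you then write is the direct projection argument and never invokes slices or the two-way case---you may want to drop that sentence.
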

We also state a stronger fact about the rank from the same references:
  in the notation of Lemma~\ref{lem_rank_independent_of_ambient}, any minimal expression $W\subset \langle \fromto{s_1}{s_{R(W)}}\rangle$, 
  for simple tensors $s_i$, must be contained in  $A_1' \otimes \dotsb \otimes A_d'$.
Here we show that the difference in the length of the decompositions 
  must be at least the difference of the respective dimensions.
For simplicity of notation, we restrict the presentation to the case $d=3$. 
The reader will easily generalise the argument to any other numbers of factors.
We stress that the lemma below does not depend on the base field, in particular, it does not require algebraic closedness.

\begin{lem}\label{lem_bound_on_rank_for_non_concise_decompositions}
   Suppose that $p \in A' \otimes B \otimes C$, for a linear subspace $A'\subset A$, 
      and that we have an expression $p \in \langle \fromto{s_1}{s_{r}}\rangle$, where $s_i = a_i \otimes b_i \otimes c_i$ are simple tensors.
  Then:
  \[
    r \ge R(p) + \dim \langle \fromto{a_1}{a_r}\rangle - \dim A'.
  \]
\end{lem}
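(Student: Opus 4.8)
The plan is to reduce to the already-known fact (Lemma~\ref{lem_rank_independent_of_ambient} and its strong form) that a minimal decomposition of a tensor must live in the minimal tensor subspace containing it. Write $\alpha:=\dim\langle\fromto{a_1}{a_r}\rangle$ and let $A''\subset A$ be the span $\langle\fromto{a_1}{a_r}\rangle$, so that all $s_i$ lie in $A''\otimes B\otimes C$ and $p\in(A'\cap A'')\otimes B\otimes C\subset A''\otimes B\otimes C$. Choose a complement so that $A''=A'_0\oplus D$, where $A'_0:=A'\cap A''$ and $\dim D=\alpha-\dim A'_0=:e\ge 0$; note $\dim A'_0\le\dim A'$, so it will suffice to prove $r\ge R(p)+e$, since $e=\alpha-\dim A'_0\ge\alpha-\dim A'$.

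The key step is an induction on $e$. If $e=0$ there is nothing to prove, as $r\ge R(p)$ always. For the inductive step, suppose $e\ge 1$. The idea is to project away one dimension of $D$: pick a hyperplane $H\subset A''$ with $A'_0\subset H$ and $\dim H=\dim A''-1$, and let $\pi\colon A''\to H$ be a linear projection with $\pi|_{A'_0}=\id$. Applying $\pi\otimes\id_B\otimes\id_C$ to the expression $p\in\langle\fromto{s_1}{s_r}\rangle$ fixes $p$ (because $p\in A'_0\otimes B\otimes C$) and sends each $s_i=a_i\otimes b_i\otimes c_i$ to $\pi(a_i)\otimes b_i\otimes c_i$, which is again simple (possibly zero). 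Hence $p\in\langle \pi(a_1)\otimes b_1\otimes c_1,\dotsc,\pi(a_r)\otimes b_r\otimes c_r\rangle$, an expression inside $H\otimes B\otimes C$ using at most $r$ simple tensors, with the $a$-parts spanning a subspace of dimension at least $\alpha-1$ (and of $H$). However, this crude bound $r-?$ does not by itself drop the count; the honest gain must come from choosing the hyperplane $H$ so that one of the $s_i$ actually dies, i.e. so that some $\pi(a_i)=0$, equivalently so that $H$ misses a chosen $a_i\notin A'_0$. So I would instead argue: since $e\ge 1$, not all $a_i$ lie in $A'_0$; pick an index $i_0$ with $a_{i_0}\notin A'_0$ and choose $H$ to be a hyperplane containing $A'_0$ and all the $a_i$ except with $a_{i_0}\notin H$ — this is possible precisely when $a_{i_0}\notin \langle A'_0, \{a_i : i\ne i_0, a_i\notin A'_0\}\rangle$, which can always be arranged after first choosing $i_0$ to lie outside the span of the others modulo $A'_0$ (take $i_0$ so that $a_{i_0}$ is not in the span of $A'_0$ together with the remaining $a_j$). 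With such $H$ and $\pi$ the projection $\pi(s_{i_0})=0$, so $p\in\langle \pi(s_i) : i\ne i_0\rangle$, an expression with at most $r-1$ simple tensors inside $H\otimes B\otimes C$, and the projected $a$-parts span a subspace of dimension $\ge \alpha-1$ inside $H$, while $p\in A'_0\otimes B\otimes C$ with $\dim A'_0$ unchanged. Applying the inductive hypothesis (for $e-1$, relative to the ambient $A'_0\subset H$) gives $r-1\ge R(p)+(e-1)$, hence $r\ge R(p)+e\ge R(p)+\alpha-\dim A'$, as desired.

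The main obstacle is the combinatorial choice in the inductive step: one must guarantee the existence of an index $i_0$ with $a_{i_0}\notin A'_0$ such that $a_{i_0}$ stays outside the span of $A'_0$ and the remaining $a_j$; this is exactly the statement that any spanning set contains a vector not in the span of the others modulo a fixed subspace, which follows from basic linear algebra (pick a basis of $A''/A'_0$ extracted from the images of the $a_i$ and let $i_0$ realize one basis vector). Two secondary points need care: that the projection of a simple tensor is simple (immediate), and that applying the inductive hypothesis is legitimate — here one uses that the new data ($p$, the $\le r-1$ simple tensors in $H\otimes B\otimes C$, with $a$-parts spanning dimension $\ge\alpha-1$) genuinely satisfies the hypotheses of the lemma with $A'$ replaced by $A'_0$ and $A$ replaced by $H$, and that $R(p)$ is unchanged by Lemma~\ref{lem_rank_independent_of_ambient}. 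Alternatively, one can avoid the induction entirely and argue in one shot: choose a subspace $A'_1\subset A''$ with $A'_0\subset A'_1$, $\dim A'_1=\alpha$, spanned by $A'_0$ together with $e$ of the $a_i$; since the $s_i$ span a space containing $p$ and the $a$-parts span $A'_1$, the strong form of Lemma~\ref{lem_rank_independent_of_ambient} applied inside $A'_1\otimes B\otimes C$ — now viewing $p$ as a tensor of $A'_0\otimes B\otimes C$ — shows that deleting the $s_i$ whose $a_i$ we used as basis vectors modulo $A'_0$ and re-projecting produces a rank-$(r-e)$ decomposition of $p$; I expect the cleanest writeup interpolates between these two formulations.
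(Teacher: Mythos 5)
Your one-shot alternative at the end is essentially the paper's own proof. The paper first shrinks $A$ to $\linspan{\fromto{a_1}{a_r}}$ and $A'$ accordingly, sets $k = \dim A - \dim A'$, reorders the $s_i$ so that $A'$ together with $\fromto{a_1}{a_k}$ spans $A$, puts $A'' := \linspan{\fromto{a_1}{a_k}}$, and applies the quotient $\pi\colon A \to A/A''$. The composite $A' \hookrightarrow A \to A/A''$ is an isomorphism carrying the given expression into one with only the last $r-k$ simple tensors (the first $k$ are killed), so $R(p) \le r-k$. That is exactly the ``strong form'' step you invoke.

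Your primary inductive argument, however, has a genuine gap at precisely the point you flag as the main obstacle. You assert that ``any spanning set contains a vector not in the span of the others modulo a fixed subspace.'' That is false: take $A'_0 = 0$, $A'' = \kk^2$, $a_1 = e_1$, $a_2 = e_2$, $a_3 = e_1+e_2$. Every $a_i$ lies in the span of the other two, so no admissible $i_0$ exists. The difficulty is self-inflicted, though: you were insisting that $H$ contain all the $a_j$ with $j \ne i_0$, which is not needed. It suffices to pick any $i_0$ with $a_{i_0}\notin A'_0$ (possible since $e\ge 1$), then any hyperplane $H\subset A''$ with $A'_0\subset H$ and $a_{i_0}\notin H$, and let $\pi$ be the projection onto $H$ with kernel $\linspan{a_{i_0}}$. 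Then $\pi$ is the identity on $A'_0$, kills $s_{i_0}$, and since $\pi$ surjects onto $H$ the images $\pi(a_j)$, $j\ne i_0$, still span all of $H$, of dimension $\alpha-1$; the inductive hypothesis with $A'_0\subset H$ then gives $r-1 \ge R(p) + (\alpha-1) - \dim A'_0$, as desired. With that repair the induction closes, but the one-shot quotient is cleaner and is the paper's route.
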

In particular,
Lemma~\ref{lem_bound_on_rank_for_non_concise_decompositions}
implies the rank part of 
  Lemma~\ref{lem_rank_independent_of_ambient} 
  for any base field $\kk$,
  which on its own can also be seen by following 
  the proof
  of \cite[Prop.~3.1.3.1]{landsberg_tensorbook} 
  or \cite[Cor.~2.2]{landsberg_jabu_ranks_of_tensors}.
\begin{prf}
  For simplicity of notation, we assume that $A' \subset \langle \fromto{a_1}{a_r}\rangle$ 
      (by replacing $A'$ with a smaller subspace if needed) and that $A = \langle \fromto{a_1}{a_r}\rangle$
      (by replacing $A$ with a smaller subspace). 
   Set $k = \dim A -\dim A'$ and let us reorder the simple tensors $s_i$ 
      in such a way that the first $k$ of the $a_i$'s are linearly independent 
      and $\linspan{A' \sqcup \setfromto{a_1}{a_k}} = A$.

   Let $A'' = \linspan{\fromto{a_1}{a_k}}$ so that $A = A' \oplus A''$
     and consider the quotient map $\pi\colon A \to A/A''$.
   Then the composition $A' \to A \stackrel{\pi}{\to} A/A''  \simeq A'$ is an isomorphism, denoted by $\phi$.
   By a minor abuse of notation, let $\pi$ and $\phi$ also denote the induced maps 
      $\pi\colon A\otimes B\otimes C \to (A/A'')\otimes B \otimes C$ and 
      $\phi\colon A'\otimes B\otimes C \simeq A'\otimes B \otimes C$.
   We have
   \begin{align*}
       \phi(p) =\pi(p) & \in \pi\left(\linspan{ \fromto{a_1\otimes b_1\otimes c_1}{a_r\otimes b_r\otimes c_r} }\right)\\
                       & = \linspan{ \fromto{\pi(a_1)\otimes b_1\otimes c_1}{\pi(a_r)\otimes b_r\otimes c_r} }\\
                       & = \linspan{ \fromto{\pi(a_{k+1})\otimes b_{k+1}\otimes c_{k+1}}{\pi(a_r)\otimes b_r\otimes c_r} }.
   \end{align*}
   Using the inverse of the isomorphism $\phi$, 
      we get a presentation of $p$ as a linear combination of $(r-k)$ simple tensors, 
      that is, $R(p)\le r-k$ as claimed.
\end{prf}

\subsection{Slice technique and conciseness}\label{section_slices}

We define the notion of conciseness of tensors and we review 
   a standard \emph{slice technique} that replaces the calculation of rank of three way tensors with the calculation of rank of linear spaces of matrices.

A tensor $p \in A_1 \otimes A_2 \otimes \dotsb \otimes A_d$ determines a linear map $p \colon A_1^* \to A_2 \otimes \dotsb \otimes A_d$. 
Consider the image $W = p(A_1^*) \subset A_2 \otimes\dotsb  \otimes  A_d$.
The elements of a basis of $W$ (or the image of a basis of $A_1^*$) are called \emph{slices} of $p$.
The point is that $W$ essentially uniquely (up to an action of $GL(A_1)$) determines $p$ (cfr. \cite[Cor.~3.6]{landsberg_jabu_ranks_of_tensors}).
Thus the subspace $W$ captures the geometric information about $p$, in particular its rank and border rank.

\begin{lem}[{\cite[Thm~2.5]{landsberg_jabu_ranks_of_tensors}}]\label{lem_rank_of_space_equal_to_rank_of_tensor}
   Suppose $p \in A_1 \otimes A_2 \otimes \dotsb \otimes A_d$ and $W = p(A_1^*)$ as above. 
   Then $R(p) = R(W)$ and (if $\kk=\CC$) $\br(p) = \br(W)$.
\end{lem}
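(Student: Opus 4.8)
The statement to prove is Lemma~\ref{lem_rank_of_space_equal_to_rank_of_tensor}: for $p \in A_1 \otimes \dotsb \otimes A_d$ with slice space $W = p(A_1^*) \subset A_2 \otimes \dotsb \otimes A_d$, we have $R(p) = R(W)$, and over $\CC$ also $\br(p) = \br(W)$. Although this is quoted from \cite{landsberg_jabu_ranks_of_tensors}, the natural self-contained approach is to exhibit inequalities in both directions directly from the definitions given in the excerpt.

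First I would prove $R(p) \ge R(W)$. Suppose $p = \sum_{i=1}^{r} a_i \otimes t_i$ with $t_i \in A_2 \otimes \dotsb \otimes A_d$ simple and $r = R(p)$. Viewing $p$ as a map $A_1^* \to A_2 \otimes \dotsb \otimes A_d$, for any $\alpha \in A_1^*$ we get $p(\alpha) = \sum_i \alpha(a_i)\, t_i$, so $W = p(A_1^*) \subset \langle t_1, \dotsc, t_r\rangle$. Since the $t_i$ are simple (rank one) tensors, this exhibits $W$ inside the span of $r$ simple tensors, whence $R(W) \le r = R(p)$. Conversely, for $R(p) \le R(W)$: let $W \subset \langle s_1, \dotsc, s_m\rangle$ with $s_j$ simple and $m = R(W)$. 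Then $p(\alpha) = \sum_j \lambda_j(\alpha) s_j$ for linear functionals $\lambda_j \in A_1^{**} = A_1$ (using finite-dimensionality), so $p = \sum_j \lambda_j \otimes s_j$ as an element of $A_1 \otimes (A_2 \otimes \dotsb \otimes A_d)$; each $\lambda_j \otimes s_j$ is simple because $s_j$ is, giving $R(p) \le m = R(W)$. This part is essentially bookkeeping with the canonical isomorphism $A_1 \otimes (A_2 \otimes \dotsb \otimes A_d) \simeq \Hom(A_1^*, A_2 \otimes \dotsb \otimes A_d)$.

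For the border rank statement over $\CC$ I would argue by semicontinuity and a limiting argument. One direction: if $\br(W) \le r$, then $W$ is a limit of subspaces $W_n$ of rank $\le r$; lifting each $W_n$ to a tensor $p_n$ via the above correspondence (choosing the functionals $\lambda_j^{(n)}$ consistently, e.g. using that the Grassmann secant is parametrised so that a decomposition varies algebraically), we obtain $p_n \to p$ with $R(p_n) \le r$, so $\br(p) \le r$. The reverse is analogous using the projection $A_1^* \to A_2 \otimes \dotsb \otimes A_d$, which is continuous. Cleanly, one can phrase this as: the map sending a tensor to its slice space is a morphism $\PP(A_1 \otimes \dotsb \otimes A_d) \dashrightarrow \Gr(k, A_2 \otimes \dotsb \otimes A_d)$ on the locus where $\dim p(A_1^*) = k$, and it carries $\sigma_r$ into $\sigma_{r,k}$ and conversely, so the minimal such $r$ agree.

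The main obstacle is the border-rank direction: one must be careful that when $W_n \to W$ the dimensions $\dim W_n$ need not all equal $\dim W$ (they can only jump down in the limit), so the corresponding tensors $p_n$ live a priori in different "slice-dimension strata". I would handle this by noting that a tensor $p'$ with $\dim p'(A_1^*) < \dim p(A_1^*)$ still admits a decomposition with at most $R(W')$ simple terms for the possibly-smaller slice space $W'$, and that $R$ is lower semicontinuous enough for the inequality $\br(p) \le \br(W)$ to survive; alternatively one invokes Lemma~\ref{lem_bound_on_rank_for_non_concise_decompositions} to control what happens on degeneration. Since the excerpt explicitly permits citing \cite{landsberg_jabu_ranks_of_tensors} and \cite{landsberg_tensorbook}, in practice the cleanest writeup simply reduces the border-rank half to the rank half applied fibrewise plus the cited semicontinuity, rather than redoing the limiting analysis from scratch.
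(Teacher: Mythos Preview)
The paper does not give its own proof of this lemma; it simply cites \cite[Thm~2.5]{landsberg_jabu_ranks_of_tensors}. So there is no paper argument to compare against, and your self-contained treatment is a genuine addition rather than a re-derivation.

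Your argument for $R(p)=R(W)$ is correct and is exactly the standard proof: a rank-$r$ decomposition of $p$ produces $r$ simple tensors spanning a space containing $W$, and conversely $r$ simple tensors whose span contains $W$ assemble, via the induced linear functionals $\lambda_j\in A_1^{**}\simeq A_1$, into a rank-$r$ expression for $p$. Nothing is missing there.

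For the border rank half, the idea is right but the write-up is slightly tangled. You do not need the decomposition of each $W_n$ to lift: once the rank equality $R(p)=R(W)$ is established, it suffices to lift $W_n\to W$ in $\Gr(k,\,A_2\otimes\dotsb\otimes A_d)$ to tensors $p_n\to p$ with $p_n(A_1^*)=W_n$, and then invoke $R(p_n)=R(W_n)\le r$ directly. The clean way to organise both directions is to first reduce to the $A_1$-concise case via Lemma~\ref{lem_rank_independent_of_ambient} (so $\dim W=\dim A_1$), after which the slice map $p\mapsto p(A_1^*)$ is a well-defined morphism on an open set and the dimension-jump obstacle you flag disappears: for $p_n\to p$ with $p$ concise, $p_n$ is eventually concise too (rank of a linear map is lower semicontinuous), so $W_n\to W$ in the Grassmannian; and conversely any path $W_n\to W$ lifts to $p_n\to p$ by choosing bases of $W_n$ converging to the image under $p$ of a basis of $A_1^*$. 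Your final sentence, deferring to the cited reference for this half, is also perfectly acceptable given that the paper itself does exactly that.
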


Clearly, we can also replace  $A_1$ with any of the $A_i$ to define slices as images $p(A_i^*)$ and obtain the analogue of the lemma. 

We can now prove the analogue of Lemmas~\ref{lem_rank_independent_of_ambient} and \ref{lem_bound_on_rank_for_non_concise_decompositions}
  for higher dimensional subspaces of the tensor space. As before, to simplify the notation,
  we only consider the case $d=2$, which is our main interest.
  
\begin{prop}\label{prop_bound_on_rank_for_non_concise_decompositions_for_vector_spaces}
   Suppose $W \subset B' \otimes C'$ for some linear subspaces $B'\subset B$, $C' \subset C$. 
\begin{enumerate}
   \item \label{item_rank_can_be_measured_anywhere}
      The numbers $R(W)$ and $\br(W)$ measured as the rank and border rank of $W$ in $B' \otimes C'$
        are equal to its rank and border rank calculated in $B \otimes C$ (in the statement about border rank, we assume that $\kk=\CC$).
   \item \label{item_decompositions_in_larger_spaces}
      Moreover, if we have an expression $W \subset \langle \fromto{s_1}{s_{r}}\rangle$, 
        where $s_i = b_i \otimes c_i$ are simple tensors,
        then:
  \[
    r \ge R(W) + \dim \langle \fromto{b_1}{b_r}\rangle - \dim B'
  \]
\end{enumerate}

\end{prop}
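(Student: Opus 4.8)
The plan is to reduce Proposition~\ref{prop_bound_on_rank_for_non_concise_decompositions_for_vector_spaces} to the tensor case already handled in Lemma~\ref{lem_bound_on_rank_for_non_concise_decompositions} via the slice technique. First I would fix an auxiliary vector space $A_1$ of dimension $k:=\dim W$ together with an isomorphism $A_1^*\simeq W$, so that the inclusion $W\hookrightarrow B'\otimes C'\subset B\otimes C$ becomes a linear map $A_1^*\to B\otimes C$, i.e.\ a tensor $p\in A_1\otimes B\otimes C$ with $p(A_1^*)=W$. By construction $p$ actually lies in $A_1\otimes B'\otimes C'$. Lemma~\ref{lem_rank_of_space_equal_to_rank_of_tensor} gives $R(p)=R(W)$, and (over $\CC$) $\br(p)=\br(W)$; the same identities hold with $B,C$ replaced by $B',C'$ since the slice space is literally the same subspace $W$ in both cases.

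For part~\eqref{item_rank_can_be_measured_anywhere}, the rank statement then follows from the rank part of Lemma~\ref{lem_bound_on_rank_for_non_concise_decompositions} (equivalently Lemma~\ref{lem_rank_independent_of_ambient}) applied to $p$: $R(W)=R(p)$ is the same whether computed in $A_1\otimes B'\otimes C'$ or $A_1\otimes B\otimes C$. For the border rank statement (with $\kk=\CC$) I would invoke Lemma~\ref{lem_rank_independent_of_ambient} for $\br(p)$, again using $\br(p)=\br(W)$ from Lemma~\ref{lem_rank_of_space_equal_to_rank_of_tensor} in both ambient spaces. So part~\eqref{item_rank_can_be_measured_anywhere} is essentially a transcription of the $d=3$ results through the slice correspondence.

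For part~\eqref{item_decompositions_in_larger_spaces}, suppose $W\subset\langle s_1,\dots,s_r\rangle$ with $s_i=b_i\otimes c_i$. The point is to promote this into a rank-$r$ expression \emph{of the tensor $p$} using slices drawn from that span. Concretely, since each element of $W$, and in particular each slice $p(\alpha)$ for $\alpha\in A_1^*$, is a linear combination of the $b_i\otimes c_i$, one can write $p=\sum_{i=1}^r a_i\otimes b_i\otimes c_i$ for suitable $a_i\in A_1$: this is because the map $A_1^*\to B\otimes C$ factoring through $\langle s_1,\dots,s_r\rangle\simeq\kk^r$ (after choosing $s_i$ as a spanning set, possibly with repetitions or a basis) dualizes to a choice of coefficients $a_i$. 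Then $p\in\langle a_1\otimes b_1\otimes c_1,\dots,a_r\otimes b_r\otimes c_r\rangle$ is exactly the hypothesis of Lemma~\ref{lem_bound_on_rank_for_non_concise_decompositions}, which yields
\[
   r\ \ge\ R(p)+\dim\langle b_1,\dots,b_r\rangle-\dim B',
\]
where I have used $W\subset A_1\otimes B'\otimes C$ so the relevant ``small'' space in the second factor is $B'$; since $R(p)=R(W)$ this is the claimed inequality. (One should note the symmetry: Lemma~\ref{lem_bound_on_rank_for_non_concise_decompositions} is stated for a restriction in the \emph{first} factor, so one applies it after permuting the three tensor factors so that $B'$ plays the role of the distinguished restricted factor; the permutation does not affect ranks.)

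The main obstacle I anticipate is the bookkeeping in producing the expression $p=\sum a_i\otimes b_i\otimes c_i$ cleanly: one must be careful that the $b_i$ appearing there are literally the same vectors as in the given expression $W\subset\langle b_i\otimes c_i\rangle$, so that $\dim\langle b_1,\dots,b_r\rangle$ is unchanged, and one must handle the degenerate possibility that some $s_i$ are not needed (which only helps, since dropping terms cannot increase $r$ and cannot increase $\dim\langle b_1,\dots,b_r\rangle$). Everything else is a direct citation of the already-established slice lemma and the three-way version of the bound.
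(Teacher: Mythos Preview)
Your proposal is correct and follows exactly the approach of the paper: the paper's proof is the single line ``Reduce to Lemmas~\ref{lem_rank_independent_of_ambient} and~\ref{lem_bound_on_rank_for_non_concise_decompositions} using Lemma~\ref{lem_rank_of_space_equal_to_rank_of_tensor},'' and you have simply unpacked this reduction in detail, including the construction of $p=\sum_i a_i\otimes b_i\otimes c_i$ from the given decomposition of $W$ and the harmless permutation of factors needed to match the hypotheses of Lemma~\ref{lem_bound_on_rank_for_non_concise_decompositions}.
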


\begin{prf}
  Reduce to Lemmas~\ref{lem_rank_independent_of_ambient} and \ref{lem_bound_on_rank_for_non_concise_decompositions} 
    using Lemma~\ref{lem_rank_of_space_equal_to_rank_of_tensor}.
\end{prf}

We conclude this section by recalling the following definition.
\begin{defin}
   Let $p \in A_1 \otimes A_2 \otimes \dotsb \otimes A_d$ be a tensor or let $W \subset A_1 \otimes A_2 \otimes \dotsb \otimes A_d$ be a linear subspace. 
   We say that $p$ or $W$ is \emph{$A_1$-concise} if for all linear subspaces $V \subset A_1$, if $p \in V \otimes A_2 \otimes \dotsb \otimes A_d$
      (respectively,  $W \subset V \otimes A_2 \otimes \dotsb \otimes A_d$), then $V = A_1$.
   Analogously, we define $A_i$-concise tensors and spaces for $i =\fromto{2}{d}$. 
   We say $p$ or $W$ is \emph{concise} if it is $A_i$-concise for all $i\in \setfromto{1}{n}$.
\end{defin}

\begin{rem}
Notice that $p \in A_1 \otimes A_2 \otimes \dotsb \otimes A_d$ is $A_1$-concise if and only if $p\colon A_1^* \to A_2 \otimes \dotsb \otimes A_d$ is injective.
\end{rem}

\section{Direct sum tensors and spaces of matrices}\label{sect_dir_sums}

Again, for simplicity of notation we restrict the presentation to the case of tensors in $A\otimes B \otimes C$ or linear subspaces of $B \otimes C$.

We introduce the following notation that will be adopted throughout this manuscript.

\begin{notation}\label{notation}
Let $A',A'',B',B'',C',C''$ be vector spaces over $\kk$ of dimensions,  respectively, $\bfa',\bfa'',\bfb',\bfb'',\bfc',\bfc''$.
Suppose $A = A' \oplus A''$,  $B = B' \oplus B''$, $C = C' \oplus C''$ 
   and $\bfa=\dim A = \bfa'+\bfa''$, $\bfb=\dim B =\bfb'+\bfb''$ and $\bfc=\dim C =\bfc'+\bfc''$.

For the purpose of illustration, we will interpret the two-way tensors in $B \otimes C$ 
as matrices in $\ccM^{\bfb\times \bfc}$. 
This requires choosing bases of $B$ and $C$, but (whenever possible) we will refrain from naming the bases explicitly.
We will refer to an element of the space of matrices 
$\ccM^{\bfb\times \bfc} \simeq B\ts C$ as a $(\bfb'+\bfb'',\bfc'+\bfc'')$ \emph{partitioned matrix}.
Every matrix $w\in\ccM^{\bfb\times \bfc} $ is a block matrix with four blocks of size
$\bfb'\times \bfc'$, $\bfb'\times \bfc''$, 
$\bfb''\times \bfc'$ and $\bfb''\times \bfc''$ respectively.
\end{notation}

\begin{notation}\label{notation_2}
As in Section~\ref{section_slices}, a tensor $p\in A\otimes B\otimes C$ is a linear map $p:A^\ast\to B\otimes C$;
   we denote by $W:=p(A^\ast)$ the image of $A^\ast$ in the space of matrices $B\otimes C$.
   Similarly, if $p = p' + p'' \in (A' \oplus A'') \otimes (B' \oplus B'') \otimes (C' \oplus C'')$ is such that  $p'\in A'\otimes  B'\otimes C'$ and $p''\in A''\otimes  B''\otimes C''$, we set $W':=p'({A'}^\ast)\subset B'\otimes C'$ and $W'':=p''({A''}^\ast)\subset B''\otimes C''$. 
In such situation, we will say that $p = p' \oplus p''$ is a \emph{direct sum tensor}.

We have the following direct sum decomposition:
\[
  W=W'\oplus W''\subset (B'\otimes C')\oplus (B''\otimes C'')
\]
and an induced matrix partition of type $(\bfb'+\bfb'',\bfc'+\bfc'')$ on every matrix $w\in W$  such that 
\[
  w=\begin{pmatrix} 
       w'           & \underline{0} \\
      \underline{0} & w''
    \end{pmatrix},
\]
where $w'\in W'$ and $w''\in W''$, and the two $\underline{0}$'s denote zero matrices of size
$\bfb'\times\bfc''$ and $\bfb''\times\bfc'$ respectively.
\end{notation}

\begin{prop}
Suppose that $p$, $W$, etc.~are as in Notation~\ref{notation_2}.
Then the additivity of the rank holds for $p$, that is~$R(p) = R(p')+R(p'')$, if and only if the additivity of the rank holds for $W$, that is, $R(W) = R(W')+ R(W'')$.
\end{prop}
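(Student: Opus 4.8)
The plan is to reduce the whole statement to the slice technique recalled in Section~\ref{section_slices}. By Notation~\ref{notation_2}, if we regard $p = p'\oplus p''$ as a linear map $p\colon A^{\ast}\to B\otimes C$, then for $\alpha' + \alpha'' \in (A')^{\ast}\oplus (A'')^{\ast} = A^{\ast}$ we have $p(\alpha'+\alpha'') = p'(\alpha') + p''(\alpha'')$, because $p'\in A'\otimes B'\otimes C'$ pairs trivially with $(A'')^{\ast}$ and symmetrically for $p''$; hence the slice space of $p$ in the $A$-direction is exactly $W = W'\oplus W'' \subset (B'\otimes C')\oplus(B''\otimes C'')$. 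This identification is already recorded in Notation~\ref{notation_2}, so I would simply cite it.

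With this in hand, first I would apply Lemma~\ref{lem_rank_of_space_equal_to_rank_of_tensor} to $p$, slicing in the $A$-direction, to obtain $R(p) = R(W)$. Then I would apply the same lemma to $p'$, viewed as a tensor in $A'\otimes B'\otimes C'$ with slice space $W' = p'((A')^{\ast})$, to get $R(p') = R(W')$, and likewise $R(p'') = R(W'')$. Finally, combining the three equalities shows that $R(p) = R(p') + R(p'')$ holds if and only if $R(W) = R(W') + R(W'')$, which is precisely the assertion. If one wishes to be pedantic about where the ranks of $W'$ and $W''$ are measured, one invokes Proposition~\ref{prop_bound_on_rank_for_non_concise_decompositions_for_vector_spaces}\eqref{item_rank_can_be_measured_anywhere} (or Lemma~\ref{lem_rank_independent_of_ambient}) to note that $R(W')$ and $R(W'')$ are the same whether computed in $B'\otimes C'$, $B''\otimes C''$, or $B\otimes C$, so there is no ambiguity on the right-hand side.

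I do not expect a genuine obstacle here: the proposition is essentially a bookkeeping consequence of Lemma~\ref{lem_rank_of_space_equal_to_rank_of_tensor}, and the only point deserving a sentence of care is the identification of the slice space of $p$ with $W'\oplus W''$, which is immediate and is in any case part of the setup in Notation~\ref{notation_2}. The role of the proposition in the paper is to license passing freely, from now on, between the ``tensor'' formulation $R(p) = R(p')+R(p'')$ and the ``space of matrices'' formulation $R(W) = R(W')+R(W'')$, and the proof should be just long enough to make that transition rigorous.
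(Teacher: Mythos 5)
Your argument is exactly the paper's proof, merely spelled out: the paper cites Lemma~\ref{lem_rank_of_space_equal_to_rank_of_tensor} as an immediate consequence, and you apply that same lemma three times (to $p$, $p'$, $p''$) and combine the resulting equalities. The one sentence of care about identifying $p(A^{\ast})$ with $W'\oplus W''$ is correct and is indeed already built into Notation~\ref{notation_2}.
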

\begin{prf}
It is an immediate consequence of Lemma~\ref{lem_rank_of_space_equal_to_rank_of_tensor}.
\end{prf}

\subsection{Projections and decompositions} 

The situation we consider here again concerns the direct sums and their minimal decompositions.
We fix $W' \subset B'\otimes C'$ and $W'' \subset B''\otimes C''$ and we choose a minimal decomposition of $W'\oplus W''$, 
that is, a linear subspace $V\subset B \otimes C$
  such that $\dim V = R(W'\oplus W'')$, $\PP V=\linspan{V_{\Seg}}$ and $V\supset W'\oplus W''$.
Such linear spaces $W'$, $W''$ and $V$ will be fixed for the rest of Sections~\ref{sect_dir_sums} and \ref{sec_rank_one_matrices_and_additive_rank}.

In addition to  Notations~\ref{notation_V_Seg}, \ref{notation} and \ref{notation_2} we need the following.

\begin{notation}\label{notation_projection}
Under Notation~\ref{notation},  let $\pi_{C'}$ denote the projection
\[
   \pi_{C'}:C \to C'',\text{ or } 
\]
whose kernel is the space $C'$. With slight abuse of notation, we shall  denote by $\pi_{C'}$ also the following projections
   \[
   \pi_{C'}:B\ts C\to B\ts C'', \text{ or } \pi_{C'}:A\otimes B\ts C\to A \otimes B\ts C'',
\]
with kernels, respectively, $B\otimes C'$ and $A\otimes B \otimes C'$.
The target of the projection is regarded as a subspace of $C$, $B\otimes C$, or $A\otimes B \otimes C$, so that it is possible to compose such projections, for instance:
    \[
        \pi_{C'} \pi_{B''} \colon B\otimes C \to B'\otimes C'', \text{ or } \pi_{C'} \pi_{B''} \colon A \otimes  B\otimes C \to A\otimes B'\otimes C''.
    \]
We also let $E'\subset B' $ (resp. $E''\subset B''$) be the minimal vector subspace such that 
   $\pi_{C'}(V)$ (resp. $\pi_{C''}(V)$) is contained in $(E'\oplus B'')\ts C''$ (resp. $(B'\oplus E'')\ts C'$).

By swapping the roles of $B$ and $C$, we  define $F'\subset C'$ and $F''\subset C''$ analogously. By the lowercase letters $\bfe',\bfe'',\bff',\bff''$ we denote the dimensions of the subspaces $E',E'',F',F''$.
\end{notation}

If the differences $R(W') - \dim W'$ and $R(W'') - \dim W''$ (which we will informally call the \emph{gaps}) are large, then the spaces $E',E'',F',F''$ could be large too, in particular they can coincide with $B', B'', C', C''$ respectively. 
In fact, these spaces measure ``how far'' a minimal decomposition $V$ of a direct sum $W=W' \oplus W''$ is from being a direct sum of decompositions of $W'$ and $W''$. 

In particular, we will show in 
  Proposition~\ref{prop_SAC_if_E'=0} and Corollary~\ref{cor_small_dimensions_of_E_and_F},
  that if $E'' = \set{0}$ or if both $E''$ and $F''$ are sufficiently small, then $R(W) = R(W')+ R(W'')$. 
Then, as a consequence of Corollary~\ref{cor_bounds_on_es_and_fs},
  if one of the gaps is at most two (say, $R(W'') = \dim W'' +2$), 
  then the additivity of the rank holds, see Theorem~\ref{thm_additivity_rank_plus_2}.

\begin{lem}\label{lemma_bound_r'_e'_R_w'}
In Notation~\ref{notation_projection} as above, with $W=W'\oplus W'' \subset B\otimes C$, 
the following inequalities hold.
\begin{align*}
R(W') + \bfe'' & \le R(W)-\dim W'',&
R(W'')+ \bfe'  & \le R(W)-\dim W', \\
R(W') + \bff'' & \le R(W)-\dim W'',&
R(W'')+ \bff'  & \le R(W)-\dim W'.
\end{align*}
\end{lem}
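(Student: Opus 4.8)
The plan is to prove the four inequalities in parallel; by the symmetry between $B$ and $C$ (swapping their roles interchanges the $\bfe$'s with the $\bff$'s) and by the symmetry between $'$ and $''$, it suffices to establish the first inequality $R(W') + \bfe'' \le R(W) - \dim W''$. First I would recall the setup: $V \subset B\otimes C$ is a fixed minimal decomposition of $W = W'\oplus W''$, so $\dim V = R(W)$, $W'\oplus W'' \subset V$, and $\PP V = \linspan{V_{\Seg}}$, i.e.\ $V$ is spanned by rank one matrices. Write these rank one spanning tensors as $s_i = b_i \otimes c_i$. The idea is to apply the projection $\pi_{C'}\colon B\otimes C \to B\otimes C''$ and track what happens to $V$, to $W''$, and to the rank one generators.

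The key steps, in order. (1) Apply $\pi_{C'}$ to $V$. By the defining property of $E''$ in Notation~\ref{notation_projection}, $\pi_{C'}(V) \subset (B'\oplus E'')\otimes C''$. Wait — I need the right one: $\pi_{C''}(V) \subset (B'\oplus E'')\otimes C'$ is the definition involving $E''$, while $\pi_{C'}(V)\subset (E'\oplus B'')\otimes C''$ involves $E'$. So to get $\bfe''$ I should instead project by $\pi_{C''}\colon B\otimes C\to B\otimes C'$, whose kernel is $B\otimes C''$. Under $\pi_{C''}$, the block form of $W=W'\oplus W''$ shows $\pi_{C''}(W') = W'$ (since $W'\subset B'\otimes C'$) and $\pi_{C''}(W'') = 0$ (since $W''\subset B''\otimes C''$). (2) Since $W' \subset \pi_{C''}(V) \subset (B'\oplus E'')\otimes C'$, I want to bound $R(W')$ using Proposition~\ref{prop_bound_on_rank_for_non_concise_decompositions_for_vector_spaces}\eqref{item_decompositions_in_larger_spaces}: $\pi_{C''}(V)$ is spanned by the rank one matrices $\pi_{C''}(s_i) = b_i \otimes \pi_{C''}(c_i)$ (those that are nonzero), with the $b_i$-components lying in $B' \oplus E''$. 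The minimality of $E''$ means exactly that the span of those $b_i$ that occur with nonzero $\pi_{C''}(c_i)$ is all of $B'\oplus E''$, of dimension $\bfb' + \bfe''$ — here I must be a little careful that minimality of $E''$ as defined refers to the inclusion $\pi_{C''}(V)\subset(B'\oplus E'')\otimes C'$, and translate that into a statement about which $b_i$ appear. (3) Now invoke Proposition~\ref{prop_bound_on_rank_for_non_concise_decompositions_for_vector_spaces}\eqref{item_decompositions_in_larger_spaces} with $W = W'\subset B'\otimes C'$, the decomposition $W'\subset\pi_{C''}(V) = \linspan{\pi_{C''}(s_i)}$, and the relevant $b$-span of dimension $\bfb'+\bfe''$: it yields $\#\{i : \pi_{C''}(s_i)\ne 0\} \ge R(W') + (\bfb'+\bfe'') - \bfb' = R(W') + \bfe''$. (4) Finally I count the generators that die under $\pi_{C''}$, i.e.\ those $s_i = b_i\otimes c_i$ with $c_i \in C''$: since the $\ker\pi_{C''} = B\otimes C''$ part of $V$ must still contain (a complement accounting for) $W''$, which sits in $B''\otimes C''$ with $R(W'') = \dim W''$... hmm, more directly: $V\cap (B\otimes C'')$ contains $W''$, and the rank one generators $s_i$ with $c_i\in C''$ span a subspace of $V$ mapping to $0$, whose span together with a lift of $\pi_{C''}(V)$ recovers $V$; counting dimensions, $\#\{i : c_i\in C''\} \ge \dim(V\cap(B\otimes C'')) \ge \dim W''$. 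Adding the two counts: $R(W) = \dim V \le \#\{s_i\}$, split into the two disjoint groups, gives $R(W) \ge (R(W')+\bfe'') + \dim W''$, which is the claim.

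The main obstacle I anticipate is step (4) and making the counting in steps (3)–(4) rigorous simultaneously: one needs the $s_i$ that survive $\pi_{C''}$ and the $s_i$ killed by $\pi_{C''}$ to genuinely partition a spanning set of size $R(W)$, and one needs $\dim(V\cap(B\otimes C'')) \ge \dim W''$ — the latter because $W'' \subset V$ and $W''\subset B\otimes C''$. A clean way is: let $V_0 = V\cap(B\otimes C'')$; then $\dim V_0 \ge \dim W''$ and $\dim \pi_{C''}(V) = \dim V - \dim V_0$. The rank one generators $s_i$ with $c_i \in C''$ lie in $V_0$; the images of the remaining ones span $\pi_{C''}(V)$, so there are at least $R(\pi_{C''}(V)) \ge R(W') + \bfe''$ of them by steps (2)–(3). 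Hence $R(W) = \dim V = \dim V_0 + \dim\pi_{C''}(V) \ge \dim W'' + R(W') + \bfe''$, but I must be slightly cautious that $\dim V_0$ itself need not be as large as the number of $s_i$ landing in it, so the cleaner route is the dimension identity $\dim V = \dim V_0 + \dim\pi_{C''}(V)$ combined with $\dim\pi_{C''}(V)\ge R(W')+\bfe''$ and $\dim V_0\ge \dim W''$, avoiding generator-counting on the $V_0$ side altogether. The other three inequalities follow by the symmetries noted at the outset.
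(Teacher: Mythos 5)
Your overall plan is the same as the paper's: correctly switch to $\pi_{C''}$, apply Proposition~\ref{prop_bound_on_rank_for_non_concise_decompositions_for_vector_spaces}\ref{item_decompositions_in_larger_spaces} to $\pi_{C''}(V)\supset W'$, and combine with the dimension count $\dim\pi_{C''}(V)=\dim V-\dim\bigl(V\cap(B\otimes C'')\bigr)\le R(W)-\dim W''$. Your ``cleaner route'' at the end is exactly the right way to finish, and it matches the paper's argument.

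However, there is a genuine gap at the step you already flagged as needing care. You assert that minimality of $E''$ forces $\linspan{b_1,\dotsc,b_r}=B'\oplus E''$, hence has dimension $\bfb'+\bfe''$. That is not what minimality of $E''$ gives. Minimality only controls the $B''$-part: it guarantees that the image of $\linspan{b_1,\dotsc,b_r}$ under the projection $B'\oplus E''\to E''$ is all of $E''$, which gives $\dim\linspan{b_1,\dotsc,b_r}\ge\dim\bigl(\linspan{b_1,\dotsc,b_r}\cap B'\bigr)+\bfe''$; it does not say $E''\subset\linspan{b_i}$, and it says nothing at all about whether $B'\subset\linspan{b_i}$. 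The inclusion $B'\subset\linspan{b_1,\dotsc,b_r}$ is what actually makes the bound from Proposition~\ref{prop_bound_on_rank_for_non_concise_decompositions_for_vector_spaces}\ref{item_decompositions_in_larger_spaces} come out to $R(W')+\bfe''$ rather than something weaker, and it is a consequence of $B'$-conciseness of $W'$ (since $W'\subset\linspan{b_i}\otimes C'$ and $W'\subset B'\otimes C'$ force $W'\subset\bigl(\linspan{b_i}\cap B'\bigr)\otimes C'$, and conciseness forces $\linspan{b_i}\cap B'=B'$). The paper handles this by first reducing to the case that $W'$ is concise, invoking Proposition~\ref{prop_bound_on_rank_for_non_concise_decompositions_for_vector_spaces} parts \ref{item_rank_can_be_measured_anywhere} and \ref{item_decompositions_in_larger_spaces} to see that neither $R(W')$, $R(W)$, nor $E''$ changes when $B'$ is replaced by the minimal subspace carrying $W'$. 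You would need to insert that reduction (or, alternatively, apply Proposition~\ref{prop_bound_on_rank_for_non_concise_decompositions_for_vector_spaces}\ref{item_decompositions_in_larger_spaces} with the minimal subspace $\widetilde{B'}\subset B'$ in place of $B'$, using $\dim\linspan{b_i}\ge\dim\widetilde{B'}+\bfe''$) to close the argument.
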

\begin{proof}
We prove only the first inequality $R(W') + \bfe'' \le R(W)-\dim W''$, 
  the other follow in the same way by swapping $B$ and $C$ or ${}'$ and ${}''$.
By Proposition~\ref{prop_bound_on_rank_for_non_concise_decompositions_for_vector_spaces}\ref{item_rank_can_be_measured_anywhere} and \ref{item_decompositions_in_larger_spaces}
we may assume $W'$ is concise: $R(W')$ or $R(W)$ are not affected by choosing the minimal subspace of $B'$ by \ref{item_rank_can_be_measured_anywhere}, also the minimal decomposition $V$ cannot involve anyone from outside of the minimal subspace by \ref{item_decompositions_in_larger_spaces}.

Since $V$ is spanned by rank one matrices and the projection
  $\pi_{C''}$ preserves the set of matrices of rank at most one,
   also the vector space $\pi_{C''}(V)$ is 
  spanned by rank one matrices, say 
  \[
    \pi_{C''}(V) = \linspan{\fromto{b_1 \otimes c_1}{b_r\otimes c_r}}
  \]
  with $r= \dim \pi_{C''}(V)$.
Moreover, $\pi_{C''}(V)$ contains $W'$.
We claim that 
\[
   B'\oplus E'' = \linspan{\fromto{b_1}{b_r}}.
\]
Indeed, the inclusion  $B' \subset  \linspan{\fromto{b_1}{b_r}}$
  follows from the conciseness of $W'$,
  as $W' \subset V\cap B'\otimes C'$.
Moreover, the inclusions
$E''\subset \linspan{\fromto{b_1}{b_r}}$ and   $B'\oplus E'' \supset \linspan{\fromto{b_1}{b_r}}$
follow from the definition of $E''$, cf. Notation~\ref{notation_projection}.

Thus Proposition~\ref{prop_bound_on_rank_for_non_concise_decompositions_for_vector_spaces}\ref{item_decompositions_in_larger_spaces} 
  implies that 
  \begin{equation}\label{equ_bound_on_pi_C_bis_of_V}
    r= \dim\pi_{C''}(V) \ge  R(W')+\underbrace{\dim\linspan{\fromto{b_1}{b_r}}}_{\bfb'+\bfe''}- \bfb' = R(W') +\bfe''.
  \end{equation}
Since $V$ contains $W''$ and $\pi_{C''}(W'') = \set{0}$, we have
\[
  r= \dim \pi_{C''}(V) \le \dim V - \dim W'' = R(W) - \dim W''. 
\]
The claim follows from the above inequality together with  \eqref{equ_bound_on_pi_C_bis_of_V}.
\end{proof}

Rephrasing the inequalities of Lemma~\ref{lemma_bound_r'_e'_R_w'}, we obtain the following.
\begin{cor}\label{cor_bounds_on_es_and_fs}
   If $R(W) < R(W')+R(W'')$, then 
   \begin{align*}
       \bfe' &< R(W')  - \dim W', &
       \bff' &< R(W')  - \dim W', \\
       \bfe''&< R(W'') - \dim W'',&
       \bff''&< R(W'') - \dim W''.
   \end{align*}
\end{cor}
This immediately recovers the known case of additivity, when the gap is equal to $0$,
that is, if $R(W')=\dim W'$, then $R(W)=R(W')+R(W'')$ (because $\bfe'\ge 0$).
Moreover, it implies that if one of the gaps is equal to $1$ 
   (say  $R(W')=\dim W'+1$),
   then either the additivity holds or both $E'$ and $F'$ 
   are trivial vector spaces.
In fact, the latter case is only possible if the former case 
   holds too.
\begin{lem}\label{lem_rank_at_least_2_more_than_dimension}
   With Notation~\ref{notation_projection}, suppose $E'=\set{0}$ and $F'=\set{0}$. 
   Then the additivity of the rank holds $R(W)= R(W') + R(W'')$.
   In particular, if $R(W') \le \dim W' +1$, then the additivity holds.
\end{lem}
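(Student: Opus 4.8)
The idea is to use the hypothesis $E' = \{0\}$ (and $F' = \{0\}$) to show that a minimal decomposition $V$ of $W = W' \oplus W''$ can be ``split'' into a decomposition of $W'$ and a decomposition of $W''$ whose lengths add up to $\dim V = R(W)$, giving $R(W) \ge R(W') + R(W'')$; the reverse inequality $R(W) \le R(W') + R(W'')$ is trivial by concatenating decompositions. So the whole content is the inequality $R(W) \ge R(W') + R(W'')$.

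First I would reduce to the case that $W'$ and $W''$ are concise in $B' \otimes C'$ and $B'' \otimes C''$ respectively, using Proposition~\ref{prop_bound_on_rank_for_non_concise_decompositions_for_vector_spaces}\ref{item_rank_can_be_measured_anywhere} and~\ref{item_decompositions_in_larger_spaces}, exactly as in the proof of Lemma~\ref{lemma_bound_r'_e'_R_w'}: shrinking $B', C', B'', C''$ to the minimal subspaces changes neither $R(W')$, $R(W'')$, nor $R(W)$, nor the relevant projections, and forces any minimal $V$ to live inside the smaller tensor spaces. Then I would look at the projection $\pi_{C'}\colon V \to (E' \oplus B'') \otimes C'' = B'' \otimes C''$, where the equality uses $E' = \{0\}$. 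Writing $V = \langle s_1, \dots, s_r\rangle$ with $s_i = b_i \otimes c_i$ rank one matrices and $r = R(W)$, the image $\pi_{C'}(V)$ is spanned by the rank one matrices $\pi_{C'}(s_i) = \pi_{B''}(b_i) \otimes \pi_{C'}(c_i)$ (identifying $B'' \subset B$ and $C'' \subset C$), and it contains $\pi_{C'}(W'') = W''$ since $W'' \subset B'' \otimes C''$ already. Symmetrically, $\pi_{C''}(V) = \pi_{B'}(V)$ lands in $B' \otimes C'$ (here I use $F' = \{0\}$, applied in the role ``$C''$ projected away'': actually I need the statement $\pi_{C''}(V) \subset (B' \oplus E'') \otimes C'$ combined with $F' = \{0\}$... let me be careful — the cleaner route is that $E' = F' = \{0\}$ together say that $\pi_{B''}\pi_{C''}(V) = \{0\}$, i.e. $V$ has no ``$B' \otimes C'$ to $B'' \otimes C''$-type cross terms after projection, which is what I want).

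The key step is then a dimension count: consider the two maps $\pi_{C'}|_V \colon V \to B'' \otimes C''$ and $\pi_{C''}|_V \colon V \to B' \otimes C'$. Their combined kernel is $V \cap (B \otimes C' \cap B' \otimes C) \supset$ (the obstruction-type terms), and I want to argue that in fact $V$ splits as $V = (V \cap B' \otimes C') \oplus (V \cap B'' \otimes C'')$ once $E' = E'' $-type spaces vanish — but wait, the hypothesis only gives $E' = F' = \{0\}$, not the double-primed ones. The correct argument, following the pattern of Lemma~\ref{lemma_bound_r'_e'_R_w'}, is: apply \eqref{equ_bound_on_pi_C_bis_of_V}-style reasoning to $\pi_{C'}$ (not $\pi_{C''}$) to get $\dim \pi_{C'}(V) \ge R(W'') + \bfe' = R(W'')$ since $\bfe' = 0$; and then $\dim \pi_{C'}(V) \le \dim V - \dim(V \cap \ker \pi_{C'}) = R(W) - \dim(V \cap B \otimes C')$. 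Since $V \cap B \otimes C' \supset W'$ and in fact (using $F' = \{0\}$, which controls how $\pi_{C''}$ behaves, equivalently controls the $B' \otimes C'$ part) one shows $\dim(V \cap B \otimes C') \ge R(W')$ — this is the inequality $R(W') \le \dim(V \cap B' \otimes C')$ because $V \cap B'\otimes C'$ is spanned by rank one matrices (being $\ker\pi_{C'}$ intersected with the rank-one-spanned... hmm, a subspace cut out from a rank-one-spanned space by a coordinate projection need not itself be rank-one-spanned). The honest fix: use that $\pi_{C''}(V) \subset (B' \oplus E'') \otimes C' = (B' \oplus E'') \otimes C'$ still might have $E'' \ne 0$, so instead I directly invoke both halves of Lemma~\ref{lemma_bound_r'_e'_R_w'}: from $\bfe' = 0$ we get $R(W'') \le R(W) - \dim W'$, and I need to upgrade $\dim W'$ to $R(W')$.

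\textbf{Main obstacle.} The real difficulty — and where I'd focus — is showing that $E' = F' = \{0\}$ forces $R(W) \ge R(W') + R(W'')$ rather than just the weaker $R(W) \ge R(W'') + \dim W'$ that Lemma~\ref{lemma_bound_r'_e'_R_w'} gives directly. The mechanism should be that $E' = F' = \{0\}$ means $\pi_{C'}(V) \subset B'' \otimes C''$ AND $\pi_{B'}(V) \subset B'' \otimes C''$ — wait, those can't both be about $B''$. Let me restate: $E' = \{0\}$ gives $\pi_{C'}(V) \subset B'' \otimes C''$; $F' = \{0\}$ gives $\pi_{B'}(V) \subset B'' \otimes C''$. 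Together these say that for any $v \in V$, writing $v = v_{11} + v_{12} + v_{21} + v_{22}$ in block form, we have $v_{12} = v_{21} = 0$ and moreover $\pi_{C'}(v) = v_{12} + v_{22} = v_{22} \in B'' \otimes C''$ automatically... so actually $E' = \{0\}$ already says the $B' \otimes C''$ block $v_{12}$ of every $v \in V$ vanishes, and $F' = \{0\}$ says the $B'' \otimes C'$ block $v_{21}$ vanishes. Hence $V \subset (B' \otimes C') \oplus (B'' \otimes C'')$! Then $V = (V \cap B' \otimes C') \oplus (V \cap B'' \otimes C'')$, both summands are spanned by the rank one matrices $s_i$ of $V$ that happen to lie in them (a rank one matrix in a block-diagonal subspace of $B \otimes C$ that is itself block-diagonal must have all its mass in one block — no wait, $b \otimes c$ with $b = b' + b''$, $c = c' + c''$ has block $b' \otimes c''$ which vanishes iff $b' = 0$ or $c'' = 0$; so a rank one element of $(B'\otimes C') \oplus (B'' \otimes C'')$ lies in one of the two blocks), so each summand is rank-one-spanned, $V \cap B' \otimes C' \supset W'$ forces $\dim(V \cap B' \otimes C') \ge R(W')$, similarly $\ge R(W'')$, and adding gives $R(W) = \dim V \ge R(W') + R(W'')$. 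The last sentence then follows from Corollary~\ref{cor_bounds_on_es_and_fs}: if $R(W') \le \dim W' + 1$ then either additivity already holds, or $\bfe' < R(W') - \dim W' \le 1$ and $\bff' < 1$, i.e. $E' = F' = \{0\}$, and we are done by the first part. So the ``obstacle'' dissolves: the point I need to nail down carefully is precisely the claim that $E' = \{0\}$ implies the $B' \otimes C''$ block of every element of $V$ vanishes (and symmetrically for $F'$), which is essentially the definition in Notation~\ref{notation_projection} unwound — $\pi_{C'}(V) \subset (E' \oplus B'') \otimes C'' = B'' \otimes C''$, and $\pi_{C'}$ is injective on the $B \otimes C''$ part, so the $B' \otimes C''$-component of $V$ is zero.
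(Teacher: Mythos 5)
Your final argument — that $E'=F'=\{0\}$ forces the off-diagonal blocks of every element of $V$ to vanish, hence $V \subset (B'\otimes C')\oplus(B''\otimes C'')$; that a rank one element of this block-diagonal space must lie entirely in one of the two blocks; so $V$ decomposes into two rank-one-spanned subspaces containing $W'$ and $W''$ of dimensions at least $R(W')$ and $R(W'')$ respectively; and that the final clause follows from Corollary~\ref{cor_bounds_on_es_and_fs} — is correct and is exactly the paper's proof. The lengthy false starts at the beginning (the reduction to conciseness, the attempted $\pi_{C'}$-dimension count, and the erroneous identity $\pi_{C''}(V)=\pi_{B'}(V)$) are all unnecessary; once you unwind the definitions of $E'$ and $F'$ into the block-vanishing statement, as you do in the end, the lemma is immediate.
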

\begin{proof}
   Since $E'=\set{0}$ and $F'=\set{0}$, by the definition of $E'$ and $F'$ we must have the following inclusions:
   \[
     \pi_{B''}(V) \subset B'\otimes C' \text{ and } \pi_{C''}(V) \subset B'\otimes C'.
   \]
   Therefore $V \subset  B'\otimes C' \oplus B''\otimes C''$ and $V$ is obtained from the union of the decompositions of $W'$ and $W''$.
   
   The last statement follows from Corollary~\ref{cor_bounds_on_es_and_fs}
\end{proof}

Later in Proposition~\ref{prop_SAC_if_E'=0} we will show a stronger version of the above lemma, 
 namely  that it is sufficient to assume that only one of $E'$ or $F'$ is zero.
In Corollary~\ref{cor_small_dimensions_of_E_and_F} 
   we prove a further generalisation based on the results in the following subsection.

\subsection{``Hook''-shaped spaces}\label{sec_hook_shaped_spaces}

It is known since \cite{jaja_takche_Strassen_conjecture} that the additivity of the tensor rank holds for tensors with one of the factors of dimension $2$, that is, using Notation~\ref{notation} and \ref{notation_2}, 
  if $\bfa' \le 2$ then $R(p'+p'') = R(p')+R(p'')$.
The same claim is recalled in \cite[Sect.~4]{landsberg_michalek_abelian_tensors} after Theorem~4.1. 
The brief comment says that if rank of $p'$ can be calculated by the \emph{substitution method}, then the additivity of the rank holds. 
Landsberg and Micha{\l}ek implicitly suggest that if $\bfa' \le 2$, then the rank of $p'$ can be calculated by the substitution method, \cite[Items~(1)--(6) after Prop.~3.1]{landsberg_michalek_abelian_tensors}.
This is indeed the case (at least over an algebraically closed field $\kk$), although rather demanding to verify,
   at least in the version of the algorithm presented in the cited article.
In particular, to show that the substitution method can calculate the rank of $p'\in \kk^2\otimes B'\otimes C'$, 
   one needs to use the normal forms of such tensors \cite[\S10.3]{landsberg_tensorbook} 
   and understand all the cases, and it is hard to agree that this method is so much simplier than the original approach of \cite{jaja_takche_Strassen_conjecture}.

Instead, probably, the intention of the authors of \cite{landsberg_michalek_abelian_tensors} was slightly different, 
   with a more direct application of \cite[Prop.~3.1]{landsberg_michalek_abelian_tensors} 
   (or Proposition~\ref{proposition_for_AFT_method_coordinate_free} below).
This has been carefully detailed and described in \cite[Prop.~3.2.12]{rupniewski_mgr} and  
   here we present this approach to show a stronger statement about small ``hook''-shaped spaces (Proposition~\ref{prop_1_2_hook_shaped}).
We stress that our argument 
  for Proposition~\ref{prop_1_2_hook_shaped}, 
  as well as \cite[Prop.~3.2.12]{rupniewski_mgr} requires the assumption of an algebraically closed base field $\kk$, 
  while the original approach 
  of \cite{jaja_takche_Strassen_conjecture} 
  works over any field.
  For a short while we also work over an arbitrary field.

\begin{defin}\label{def_hook_shaped_space}
   For non-negative integers $e, f$, we say that a linear subspace $W \subset B \otimes C$ 
      is \emph{$(e,f)$-hook shaped}, 
      if $W\subset \kk^e\otimes C + B \otimes \kk^f$ 
      for some choices of linear subspaces $\kk^e\subset B$ and $\kk^f\subset C$.
\end{defin}

The name ``hook shaped'' space comes from the fact that under an appropriate choice of basis, the only non-zero coordinates form a shape of a hook 
$\ulcorner$ situated in the upper left corner of the matrix, see Example~\ref{ex_hook}.
The integers $(e,f)$ specify how wide the edges of the hook are.
A similar name also appears in the context of Young diagrams, see for instance \cite[Def.~2.3]{berele_regev_hook_Young_diagrams_with_applications}.

\begin{example}\label{ex_hook}
A $(1,2)$-hook shaped subspace of $\kk^4 \otimes \kk^4$ has only the following possibly nonzero entries in some coordinates:
$$\begin{bmatrix}
  *   & * & * & *\\
  * & * & 0 & 0\\
  * & *  & 0 & 0\\
  * & *  & 0 & 0\\
 \end{bmatrix}.
$$
\end{example}

The following elementary observation is presented in \cite[ Prop.~3.1]{landsberg_michalek_abelian_tensors} and in \cite[Lem.~B.1]{alexeev_forbes_tsimerman_Tensor_rank_some_lower_and_upper_bounds}.
Here we have phrased it in a coordinate free way.

%

\begin{prop} \label{proposition_for_AFT_method_coordinate_free} 
   Let $p \in A\otimes B \otimes C$, $R(p) = r>0$, and pick $\alpha \in A^*$ such that $p(\alpha) \in B\otimes C$ is nonzero.
   Consider two hyperplanes in $A$: the linear hyperplane $\alpha^{\perp}= (\alpha =0)$
     and the affine hyperplane $(\alpha=1)$.
   For any $a\in (\alpha=1)$, denote
   \[
     \tilde{p}_{a}:= p - a\otimes p(\alpha) \in \alpha^{\perp} \otimes B \otimes C.
   \]
   Then:
 \begin{enumerate}
  \item \label{item_AFT_exists_choice_droping_rank}
        there exists a choice of $a\in (\alpha=1)$ such that $R(\tilde{p}_{a}) \leq r-1$,
  \item \label{item_AFT_any_choice_drops_rank_at_most_1}
        if in addition $R(p(\alpha))=1$, then for any choice of $a \in (\alpha=1)$ 
        we have $R(\tilde{p}_{a}) \geq r-1$.
 \end{enumerate}
\end{prop}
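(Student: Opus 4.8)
The plan is to produce, from a minimal decomposition of $p$, a decomposition of $\tilde p_a$ witnessing part~\eqref{item_AFT_exists_choice_droping_rank}, and then to bound $\tilde p_a$ from below using additivity-style reasoning for part~\eqref{item_AFT_any_choice_drops_rank_at_most_1}. For the first part, I would write $p = \sum_{i=1}^r a_i\otimes b_i \otimes c_i$ with all $b_i\otimes c_i\neq 0$. Applying the linear functional $\alpha$ gives $p(\alpha) = \sum_i \alpha(a_i)\, b_i\otimes c_i$. The idea is that at least one index, say $i=1$, must have $\alpha(a_1)\neq 0$, since $p(\alpha)\neq 0$. Rescaling, I may assume $a_1\in(\alpha=1)$ after absorbing the scalar into $b_1\otimes c_1$; then set $a:=a_1$. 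Now compute
\[
  \tilde p_a = p - a\otimes p(\alpha) = \sum_{i=1}^r a_i\otimes b_i\otimes c_i - a_1\otimes\Bigl(\sum_i \alpha(a_i) b_i\otimes c_i\Bigr) = \sum_{i=1}^r \bigl(a_i - \alpha(a_i)a_1\bigr)\otimes b_i\otimes c_i.
\]
The $i=1$ term vanishes because $a_1 - \alpha(a_1)a_1 = a_1 - a_1 = 0$, and each remaining summand has first factor $a_i-\alpha(a_i)a_1\in\alpha^\perp$ (since $\alpha$ of it is $\alpha(a_i)-\alpha(a_i)=0$). Hence $\tilde p_a$ is a sum of $r-1$ simple tensors in $\alpha^\perp\otimes B\otimes C$, giving $R(\tilde p_a)\le r-1$.

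For part~\eqref{item_AFT_any_choice_drops_rank_at_most_1}, suppose $R(p(\alpha))=1$, so $p(\alpha)=b\otimes c$ for some nonzero $b\in B$, $c\in C$, and fix an arbitrary $a\in(\alpha=1)$. Then $p = \tilde p_a + a\otimes p(\alpha) = \tilde p_a + a\otimes b\otimes c$. I would decompose $A = \alpha^\perp \oplus \langle a\rangle$; this is a direct sum of vector spaces, and relative to it $\tilde p_a$ lives in $\alpha^\perp\otimes B\otimes C$ while $a\otimes b\otimes c$ lives in $\langle a\rangle\otimes B\otimes C$ — but note these are \emph{not} independent in the $B$ and $C$ factors, so this is not literally an instance of Problem~\ref{prob_strassen}. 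Instead I would argue directly: take any minimal decomposition $\tilde p_a = \sum_{j=1}^{s} a_j'\otimes b_j'\otimes c_j'$ with $s = R(\tilde p_a)$ and all $a_j'\in\alpha^\perp$; then $p = \sum_j a_j'\otimes b_j'\otimes c_j' + a\otimes b\otimes c$ is a decomposition of $p$ into $s+1$ simple tensors, so $r = R(p)\le s+1$, i.e.\ $R(\tilde p_a)\ge r-1$. This actually needs no hypothesis on $R(p(\alpha))$ for the inequality $R(\tilde p_a)\ge R(p)-1$ — the role of $R(p(\alpha))=1$ is to ensure the single extra term $a\otimes p(\alpha)$ really is simple, which is exactly what makes the displayed step legitimate.

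I do not expect a serious obstacle here: both directions are elementary manipulations of tensor decompositions. The only point requiring a little care is the rescaling in part~\eqref{item_AFT_exists_choice_droping_rank} — one must check that after absorbing scalars the chosen $a$ can be taken in the affine hyperplane $(\alpha=1)$, which works precisely because some $\alpha(a_i)\neq 0$; and in part~\eqref{item_AFT_any_choice_drops_rank_at_most_1} one must resist the temptation to invoke rank additivity, since the two pieces share the spaces $B$ and $C$. The honest statement is just: adding one simple tensor changes the rank by at most one, and $R(p(\alpha))=1$ guarantees $a\otimes p(\alpha)$ is that one simple tensor.
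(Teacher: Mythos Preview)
Your argument is correct and is precisely the direct elementary proof the paper alludes to: the paper itself does not write out a proof but refers to \cite[Prop.~3.1]{landsberg_michalek_abelian_tensors} and to the equivalent slice reformulation in Proposition~\ref{proposition_for_AFT_method_slice_A}, and your computation is exactly what one finds upon unpacking either reference. One small stylistic remark: your closing comment that part~\ref{item_AFT_any_choice_drops_rank_at_most_1} ``needs no hypothesis on $R(p(\alpha))$'' is misleading as written---without that hypothesis you only get $R(\tilde p_a)\ge r-R(p(\alpha))$, so the bound $r-1$ genuinely requires $R(p(\alpha))=1$, as you then immediately concede.
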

See \cite[Prop.~3.1]{landsberg_michalek_abelian_tensors} for the proof (note the statement there is over the complex numbers only, but the proof is field independent) or, alternatively,
   using Lemma~\ref{lem_rank_of_space_equal_to_rank_of_tensor} translate it into 
   the following straightforward  statement on linear spaces of tensors:

%
%
%
\begin{prop}\label{proposition_for_AFT_method_slice_A} 
 Suppose $W \subset  B \otimes C$ is a linear subspace, $R(W) = r$. 
 Assume $w \in W$ is a non-zero element.
 Then:
 \begin{enumerate}
  \item there exists a choice of a complementary subspace $\widetilde{W}\subset W$,
     such that $\widetilde{W} \oplus \linspan{w} = W$ and $R(\widetilde{W}) \leq r-1$, and
  \item if in addition $R(w)=1$, then for any choice of the complementary subspace $\widetilde{W} \oplus \linspan{w} = W$
     we have $R(\widetilde{W}) \geq r-1$.
 \end{enumerate}
\end{prop}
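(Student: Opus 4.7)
The plan is to deduce Proposition~\ref{proposition_for_AFT_method_slice_A} from its coordinate-free counterpart, Proposition~\ref{proposition_for_AFT_method_coordinate_free}, by promoting the linear space $W$ to a concise three-way tensor and running the dictionary of Lemma~\ref{lem_rank_of_space_equal_to_rank_of_tensor}. Concretely, I would set $A := W^*$ and let $p \in A \otimes B \otimes C$ be the tensor corresponding to the tautological inclusion $p\colon A^* = W \hookrightarrow B\otimes C$. Then $p(A^*) = W$, so Lemma~\ref{lem_rank_of_space_equal_to_rank_of_tensor} gives $R(p) = R(W) = r$, and $p$ is $A$-concise.

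Next, I would choose $\alpha \in A^* = W$ to be the non-zero element $w$ itself, so that $p(\alpha) = w$. The key bookkeeping step is to identify the set of complements $\widetilde{W}$ of $\linspan{w}$ inside $W$ with the affine hyperplane $(\alpha = 1)\subset A$. Given $a\in (\alpha=1)$, the tensor
\[
\tilde{p}_a = p - a\otimes p(\alpha) \in \alpha^{\perp}\otimes B\otimes C
\]
defines, via Lemma~\ref{lem_rank_of_space_equal_to_rank_of_tensor}, a linear subspace $\widetilde{W}_a := \tilde{p}_a(\alpha^{\perp})\subset B\otimes C$, and conversely every linear functional $\ell\in W^*$ with $\ell(w)=1$ arises as $\beta\mapsto \beta(a)$ for a unique such $a$. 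Unwinding the definitions, $\widetilde{W}_a = \set{v - \ell(v)\,w \mid v\in W} = \ker \ell$, which is a hyperplane in $W$ not containing $w$, i.e.\ precisely a complement of $\linspan{w}$. Moreover, $\dim \widetilde{W}_a = \dim W - 1 = \dim \alpha^{\perp}$, so $\tilde{p}_a$ is $\alpha^{\perp}$-concise and Lemma~\ref{lem_rank_of_space_equal_to_rank_of_tensor} yields $R(\tilde{p}_a) = R(\widetilde{W}_a)$.

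With this correspondence in hand, both parts of the proposition fall out immediately from Proposition~\ref{proposition_for_AFT_method_coordinate_free}: part~\ref{item_AFT_exists_choice_droping_rank} supplies an $a$ with $R(\tilde{p}_a)\le r-1$, hence a complement $\widetilde{W}$ with $R(\widetilde{W})\le r-1$; and under the extra assumption $R(w)=1$ (which is exactly $R(p(\alpha))=1$), part~\ref{item_AFT_any_choice_drops_rank_at_most_1} gives $R(\tilde{p}_a)\ge r-1$ for every $a\in(\alpha=1)$, hence $R(\widetilde{W})\ge r-1$ for every complement $\widetilde{W}$ of $\linspan{w}$ in $W$.

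There is no real obstacle here beyond bookkeeping; the only thing to be careful about is verifying that the parametrisation $a\leftrightarrow \widetilde{W}_a$ is bijective and that $\tilde{p}_a$ is concise in its first slot so that the rank of the tensor equals the rank of the associated subspace. Once this is checked, the two proofs are the same statement read through the slice dictionary.
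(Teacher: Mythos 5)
Your proof is correct and is exactly the translation that the paper alludes to: the text preceding the proposition says to deduce it from Proposition~\ref{proposition_for_AFT_method_coordinate_free} ``using Lemma~\ref{lem_rank_of_space_equal_to_rank_of_tensor}'' without spelling out the dictionary, and you have simply made that dictionary explicit (with $A=W^*$, $\alpha=w\in A^*=W$, and the bijection between complements $\widetilde W$ of $\linspan{w}$ and points $a$ of the affine hyperplane $(\alpha=1)$). The only nitpick is the notation $\tilde p_a(\alpha^\perp)$, which should read $\tilde p_a(A^*)$ or $\tilde p_a((\alpha^\perp)^*)$; your subsequent computation $\widetilde W_a=\ker\ell$ makes the intended meaning clear, so this is merely cosmetic.
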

%
%
%
%

Proposition~\ref{proposition_for_AFT_method_coordinate_free} is crucial in the proof that the additivity of the rank holds for vector spaces, one of which is $(1,2)$-hook shaped (provided that the base field is algebraically closed).
Before taking care of that, we use the same proposition to prove a simpler statement about $(1,1)$-hook shaped spaces, which is valid without any assumption on the field. The proof essentially follows the idea outlined in 
\cite[Thm~4.1]{landsberg_michalek_abelian_tensors}.

\begin{prop}\label{prop_1_1_hook_shaped}
   Suppose $W''\subset B''\otimes C''$ is $(1,1)$-hook shaped and $W'\subset B'\otimes C'$ 
      is an arbitrary subspace.
   Then the additivity of the rank holds for $W'\oplus W''$.
\end{prop}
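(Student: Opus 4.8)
The plan is to induct on $\dim W''$, peeling off one dimension of $W''$ at a time using the substitution-method mechanism of Proposition~\ref{proposition_for_AFT_method_slice_A}. Since $W''$ is $(1,1)$-hook shaped, we may choose bases so that $W'' \subset \kk\otimes C'' + B''\otimes \kk$, i.e.\ every matrix in $W''$ has support only in the first row and first column. First I would dispose of the trivial case: if $W''$ is contained in the ``arm'' $\kk\otimes C''$ alone (or in the ``leg'' $B''\otimes\kk$ alone), then $W''$ is a space of matrices all proportional in the row/column structure — more precisely it has rank $R(W'')=\dim W''$ when it is a generic hook-line, and in any case one can argue directly. The real content is the inductive step where $W''$ genuinely occupies both the arm and the leg.

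For the inductive step, pick a nonzero $w''\in W''$ of rank one: because $W''$ is hook shaped, it always contains rank-one elements (e.g.\ a single nonzero row or column, or the corner entry). Better, choose $w''$ so that $w'' = b''\otimes c''$ lies in $W''$ with, say, $b''$ spanning the distinguished line $\kk\subset B''$ used in the hook decomposition (if $W''$ meets $\kk\otimes C''$ nontrivially) — the point is to pick the rank-one element cleverly so that the complementary subspace $\widetilde{W''}$ produced by Proposition~\ref{proposition_for_AFT_method_slice_A}\ref{item_...} is again hook shaped, now of dimension $\dim W'' - 1$, and with $R(\widetilde{W''}) = R(W'') - 1$ (the lower bound $R(\widetilde{W''})\ge R(W'')-1$ is automatic from part~(ii) since $R(w'')=1$, and minimality gives equality). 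Then I would apply the same rank-one element $w''$, now viewed as a rank-one element of $W = W'\oplus W''$, to the space $W$: Proposition~\ref{proposition_for_AFT_method_slice_A}\ref{item_...} gives a complement $\widetilde{W}$ with $\widetilde{W}\oplus\linspan{w''} = W$ and $R(\widetilde W)\ge R(W)-1$; and one checks $\widetilde W$ can be taken to be $W' \oplus \widetilde{W''}$, so that by induction $R(\widetilde W) = R(W') + R(\widetilde{W''}) = R(W') + R(W'') - 1$. Combining, $R(W) \le R(\widetilde W) + 1 = R(W')+R(W'')$, and the reverse inequality $R(W)\ge R(W')+R(W'')$ is trivial (or follows from Lemma~\ref{lemma_bound_r'_e'_R_w'}), giving additivity.

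The base case is $\dim W'' \le 1$: if $W''=0$ there is nothing to prove, and if $\dim W''=1$ then $W''=\linspan{w''}$ with $R(w'')\le 1$ necessarily (a hook-shaped line of dimension $1$... wait, a single matrix supported on a hook need not have rank $\le 1$) — so in the base case $\dim W''=1$ one instead argues: $W'' = \linspan{w''}$ is one-dimensional, so $R(W'') = R(w'')$, and additivity for adding a single tensor to $W'$ follows from Proposition~\ref{proposition_for_AFT_method_slice_A} applied iteratively to $w''$ written as a sum of $R(w'')$ rank-one matrices lying in $B''\otimes C''$, each disjoint from $W'$; alternatively this case is subsumed in the general peeling argument as long as one is careful that rank-one elements of $W''$ exist, which they do whenever $W''\ne 0$ is hook shaped intersecting a single row or column line.

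The main obstacle I anticipate is the bookkeeping in the inductive step: ensuring that the complementary subspace $\widetilde{W''}$ furnished by Proposition~\ref{proposition_for_AFT_method_slice_A} can simultaneously be chosen to (a) remain hook shaped and (b) patch together with $W'$ to form a valid complement $\widetilde W = W'\oplus\widetilde{W''}$ inside $W$ on which the induction hypothesis applies. This requires unwinding the proof of Proposition~\ref{proposition_for_AFT_method_slice_A} — essentially that the complement is obtained by subtracting suitable multiples of $w''$ from a basis — and noting that since $w''\in B''\otimes C''$ is supported on the hook, subtracting multiples of it preserves both the block-diagonal structure (so $W'$ is untouched) and the hook structure of the $W''$-part. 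The real subtlety is whether one can always find the rank-one $w''$ inside $W''$ in such a way that this works; for a hook space this is where the specific shape $\kk^1\otimes C + B\otimes\kk^1$ is used, as opposed to a general $(e,f)$-hook with $e,f\ge 2$ (which is exactly why Proposition~\ref{prop_1_2_hook_shaped} is harder and needs algebraic closedness).
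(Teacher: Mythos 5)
Your plan takes a genuinely different, slice-level route from the paper, which argues entirely at the tensor level in $A\otimes B\otimes C$: it constructs a chain $p_0,p_1,\dots,p_{r''}$ by applying the substitution of Proposition~\ref{proposition_for_AFT_method_coordinate_free} to rank-one slices $p''_i(\gamma)$ or $p''_i(\beta)$ with $\gamma\in(C'')^*$ or $\beta\in(B'')^*$, never to rank-one elements of $W''=p''((A'')^*)$. The choice of slicing direction is essential: the hook structure \emph{forces} those $(B'')^*$- and $(C'')^*$-slices into a line, hence into rank at most one, so no rank-one element ever has to be found in $W''$; moreover the intermediate tensors $p_i$ are allowed to leave the class of direct sums, with the ``rubbish'' confined to a controlled subspace $\Sigma$ via Lemmas~\ref{lem_constructing_next_tensor_in_the_sequence} and~\ref{lem_first_step_for_hooks}, while only the $A'\otimes B'\otimes C'$-corner $p'_i$ is required to stay fixed.

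Two steps in your plan break down. First, the good complement $\widetilde W$ furnished by part~(i) of Proposition~\ref{proposition_for_AFT_method_slice_A} need not contain $W'$, and your claim that subtracting multiples of $w''\in B''\otimes C''$ ``preserves the block-diagonal structure (so $W'$ is untouched)'' is false: any complement has the form $\{\,v-\lambda(v)w'' : v\in W\,\}$ for some $\lambda\in W^*$ with $\lambda(w'')=1$, and for $v\in W'$ with $\lambda(v)\neq0$ the element $v-\lambda(v)w''$ lies in neither $B'\otimes C'$ nor $B''\otimes C''$. The functional $\lambda$ achieving the rank drop comes from a minimal decomposition of $W$, which has no reason to respect the block splitting, so you cannot force $\lambda|_{W'}=0$; and if you do impose $\widetilde W\supset W'$, part~(i) no longer guarantees a rank drop---indeed, the existence of a rank-dropping complement of the form $W'\oplus\widetilde{W''}$ is essentially equivalent to the additivity you are proving, so the induction becomes circular. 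Second, a nonzero $(1,1)$-hook shaped $W''$ need not contain a rank-one matrix at all: with $\dim B''=\dim C''=2$ and distinguished lines $\linspan{x}\subset B''$, $\linspan{y}\subset C''$, the one-dimensional space $W''=\linspan{x\otimes c_2+b_2\otimes y}$ is $(1,1)$-hook shaped but its unique direction has rank $2$. Your base-case fallback (peeling off rank-one summands of $w''$) does not fit Proposition~\ref{proposition_for_AFT_method_slice_A} either, since the lower bound in part~(ii)---which is what keeps the induction from losing ground---requires the peeled element itself to have rank one. The paper's two-phase substitution in the $B''$ and $C''$ directions handles exactly this situation without ever needing a rank-one element of $W''$.
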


Before commencing the proof of the proposition we state three lemmas, which will be applied to both $(1,1)$ and $(1,2)$ hook shaped spaces.
The first lemma is analogous to \cite[Thm~4.1]{landsberg_michalek_abelian_tensors}. 
In this lemma (and also in the rest of this section) we will work with a sequence of tensors, $p_0, p_1, p_2, \dotsc$ in the space $A\otimes B \otimes C$, which are not necessarily direct sums.
Nevertheless, for each $i$,
  we write $p'_i = \pi_{A''}  \pi_{B''} \pi_{C''}(p_i)$ 
  (that is, this is the ``corner'' of $p_i$ corresponding to $A'$, $B'$ and $C'$).   
We define $p''_i$ analogously.

\begin{lem}\label{lem_proving_additivity_via_substitution}
   Suppose $W'\subset A'\otimes B'\otimes C'$ 
     and $W''\subset A''\otimes B''\otimes C''$ are two subspaces.
   Let $r''=R(W'')$ and suppose that there exists a sequence of tensors
   $p_0, p_1, p_2, \dotsc, p_{r''}\in A\otimes B \otimes C$ 
     satisfying the following properties:
    \renewcommand{\theenumi}{(\arabic{enumi})}
     \begin{enumerate}
      \item \label{item_proof_additivity_hook_p0_eq_p}
            $p_0 =p$ is such that $p(A^*) =  W = W' \oplus W''$,
      \item \label{item_proof_additivity_hook_p_prime_preserved} 
            $p'_{i+1}=p'_{i}$ for every $0\le i < r''$,
      \item \label{item_proof_additivity_hook_p_i_bis_does_not_drop_rank_too_much} 
            $R(p''_{i+1}) \ge R(p''_{i})-1$ for every $0\le i < r''$, 
      \item \label{item_proof_additivity_hook_p_i_drops_rank_enough} 
            $R(p_{i+1}) \le R(p_{i})-1$ for each $0\le i < r''$.
   \end{enumerate}
   \renewcommand{\theenumi}{(\roman{enumi})}

   Then the additivity of the rank holds for $W'\oplus W''$
      and for each $i < r''$ we must have $p''_i\ne 0$.
\end{lem}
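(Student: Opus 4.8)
The plan is to run the chain of inequalities implicit in conditions \ref{item_proof_additivity_hook_p_prime_preserved}--\ref{item_proof_additivity_hook_p_i_drops_rank_enough} and combine them with the subadditivity of rank to pin down all the numbers. First I would record the trivial upper bound: since $W = W' \oplus W''$ sits inside the direct sum of any decompositions of $W'$ and $W''$, we always have $R(p) = R(W) \le R(W') + R(W'') = R(W') + r''$. So the whole task is the reverse inequality $R(W) \ge R(W') + r''$.

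Next I would telescope condition \ref{item_proof_additivity_hook_p_i_drops_rank_enough}: applying it $r''$ times gives $R(p_{r''}) \le R(p_0) - r'' = R(W) - r''$. Now I need a lower bound on $R(p_{r''})$, and this is where \ref{item_proof_additivity_hook_p_prime_preserved} enters. By \ref{item_proof_additivity_hook_p_prime_preserved}, $p'_{r''} = p'_0$, and $p'_0 = \pi_{A''}\pi_{B''}\pi_{C''}(p_0)$ is just the ``$W'$-corner'' of $p$, which spans (a space isomorphic to) $W'$; hence $R(p'_0) = R(W')$. Since $p'_{r''}$ is obtained from $p_{r''}$ by the corner projection $\pi_{A''}\pi_{B''}\pi_{C''}$, and projections do not increase rank (the image of a rank-one tensor is rank at most one), we get $R(p_{r''}) \ge R(p'_{r''}) = R(p'_0) = R(W')$. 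Combining, $R(W') \le R(p_{r''}) \le R(W) - r''$, i.e. $R(W) \ge R(W') + r''$, which together with the upper bound yields $R(W) = R(W') + R(W'')$. That settles the additivity statement.

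For the second assertion — that $p''_i \ne 0$ for every $i < r''$ — I would argue by contradiction combined with a counting of how the total rank is forced to split. Suppose $p''_{i_0} = 0$ for some $i_0 < r''$. Then $R(p''_{i_0}) = 0$. But telescoping \ref{item_proof_additivity_hook_p_i_bis_does_not_drop_rank_too_much} from $0$ up to $i_0$ gives $R(p''_{i_0}) \ge R(p''_0) - i_0 = r'' - i_0 > 0$ (using $R(p''_0) = R(W'') = r''$, since $p''_0$ is the $W''$-corner of $p$, and $i_0 < r''$), a contradiction. Here I should double-check the base case $R(p''_0) = r''$: indeed $p''_0 = \pi_{A''}\pi_{B''}\pi_{C''}(p_0)$ recovers exactly the $W''$ block of $p$, so its slice space is $W''$ and Lemma~\ref{lem_rank_of_space_equal_to_rank_of_tensor} gives $R(p''_0) = R(W'') = r''$. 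The main subtlety to watch is making sure the corner projections interact correctly with the slice identification $p \mapsto p(A^*)$ — that is, that $R(p'_i)$ and $R(p''_i)$ really equal the ranks of the corresponding slice subspaces — but this is immediate from Lemma~\ref{lem_rank_of_space_equal_to_rank_of_tensor} and the fact that a composition of the three coordinate projections maps rank-one tensors to rank-$\le 1$ tensors, so no genuine obstacle arises; the argument is essentially bookkeeping on the four hypotheses.
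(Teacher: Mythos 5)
Your proof is correct and follows essentially the same route as the paper: the same chain of inequalities built from telescoping condition~(4), using condition~(2) plus the fact that corner projections do not increase rank to bound $R(p_{r''})$ from below by $R(W')$, and telescoping condition~(3) for the nonvanishing claim. The only difference is cosmetic — you verify the base identities $R(p'_0) = R(W')$ and $R(p''_0) = R(W'')$ explicitly, which the paper leaves implicit.
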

\begin{proof}
   We have 
   \[
      R(W') + R(W'')
      \stackrel{\text{\ref{item_proof_additivity_hook_p0_eq_p},\ref{item_proof_additivity_hook_p_prime_preserved}}}{=}
      R(p'_{r''}) + r'' 
      \le R(p_{r''})+r''
      \stackrel{\text{\ref{item_proof_additivity_hook_p_i_drops_rank_enough}}}{\le}
      R(p_0)
      \stackrel{\text{\ref{item_proof_additivity_hook_p0_eq_p}}}{=}
      R(W).
   \]
   The nonvanishing of $p_i''$ follows from
   \ref{item_proof_additivity_hook_p_i_bis_does_not_drop_rank_too_much}.
\end{proof}

The second lemma tells us how to construct a single step in the above sequence.
\begin{lem}\label{lem_constructing_next_tensor_in_the_sequence}
   Suppose $\Sigma \subset A\otimes B \otimes C$ is a linear subspace, 
     $p_i\in \Sigma$ is a tensor,
     and $\gamma\in C''$ is such that:
     \begin{itemize}
        \item $R(p''_i(\gamma))=1$,
        \item $\gamma$ preserves $\Sigma$, that is,
        $\Sigma(\gamma)\otimes C \subset \Sigma$, 
        where $\Sigma(\gamma) = \set{t(\gamma)\mid t \in \Sigma} \subset A\otimes B$. 
        \item $\Sigma(\gamma)$ does not have entries in $A'\otimes B'$, that is
        $
          \pi_{A''}\pi_{B''}(\Sigma(\gamma)) =0.
        $
     \end{itemize}
   Consider $\gamma^{\perp} \subset C$ to be the perpendicular hyperplane.
   Then there exists 
      $p_{i+1} \in (\Sigma \cap A\otimes B \otimes \gamma^{\perp})$
      that satisfies properties \ref{item_proof_additivity_hook_p_prime_preserved}--\ref{item_proof_additivity_hook_p_i_drops_rank_enough}
      of Lemma~\ref{lem_proving_additivity_via_substitution}
      (for a fixed $i$).
\end{lem}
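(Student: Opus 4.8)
The plan is to build $p_{i+1}$ from $p_i$ by a single substitution step performed in the $C$-factor with respect to the functional $\gamma$, and then to read off properties \ref{item_proof_additivity_hook_p_prime_preserved}--\ref{item_proof_additivity_hook_p_i_drops_rank_enough} of Lemma~\ref{lem_proving_additivity_via_substitution} from the two halves of Proposition~\ref{proposition_for_AFT_method_coordinate_free}, applied with the roles of $A$ and $C$ interchanged. The three hypotheses on $\gamma$ enter, respectively, to make the substitution step drop the rank in a controlled way (first hypothesis), to keep the step inside $\Sigma$ (second hypothesis), and to make it preserve the $A'\otimes B'\otimes C'$-corner (third hypothesis).

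First I would record that $p_i(\gamma)\ne 0$. Since contraction with $\gamma$ commutes with the projections $\pi_{A'},\pi_{B'},\pi_{C'}$ and only involves the $C''$-part of the last factor, we have $p''_i(\gamma)=\pi_{A'}\pi_{B'}(p_i(\gamma))$, so $R(p''_i(\gamma))=1$ forces $p_i(\gamma)\ne 0$, and in particular $R(p_i)>0$. Now, for an arbitrary $c$ in the affine hyperplane $(\gamma=1)\subset C$, put
\[
   p_{i+1} := p_i - p_i(\gamma)\otimes c .
\]
By the second hypothesis, $p_i(\gamma)\otimes c\in\Sigma(\gamma)\otimes C\subset\Sigma$, hence $p_{i+1}\in\Sigma$; and $p_{i+1}(\gamma)=p_i(\gamma)-p_i(\gamma)=0$, so $p_{i+1}\in\Sigma\cap A\otimes B\otimes\gamma^{\perp}$. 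It thus remains only to choose $c$ so that \ref{item_proof_additivity_hook_p_prime_preserved}--\ref{item_proof_additivity_hook_p_i_drops_rank_enough} hold.

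The crucial observation is that \ref{item_proof_additivity_hook_p_prime_preserved} and \ref{item_proof_additivity_hook_p_i_bis_does_not_drop_rank_too_much} hold for \emph{every} $c\in(\gamma=1)$. Indeed, by the third hypothesis $p_i(\gamma)\in\Sigma(\gamma)$ has no component in $A'\otimes B'$, so $p_i(\gamma)\otimes c$ has no component in $A'\otimes B'\otimes C'$; applying $\pi_{A''}\pi_{B''}\pi_{C''}$ to the definition of $p_{i+1}$ then gives $p'_{i+1}=p'_i$, which is \ref{item_proof_additivity_hook_p_prime_preserved}. Applying instead $\pi_{A'}\pi_{B'}\pi_{C'}$ gives $p''_{i+1}=p''_i-p''_i(\gamma)\otimes\pi_{C'}(c)$, and $\pi_{C'}(c)$ again lies in $(\gamma=1)$ because $\gamma$ vanishes on $C'$; since $R(p''_i(\gamma))=1$, part \ref{item_AFT_any_choice_drops_rank_at_most_1} of Proposition~\ref{proposition_for_AFT_method_coordinate_free}, applied to $p''_i\in A''\otimes B''\otimes C''$ in the $C$-factor, yields $R(p''_{i+1})\ge R(p''_i)-1$, which is \ref{item_proof_additivity_hook_p_i_bis_does_not_drop_rank_too_much}.

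Finally, part \ref{item_AFT_exists_choice_droping_rank} of Proposition~\ref{proposition_for_AFT_method_coordinate_free}, applied to $p_i$ with the functional $\gamma$ in the $C$-factor, produces at least one $c\in(\gamma=1)$ with $R(p_i-p_i(\gamma)\otimes c)\le R(p_i)-1$; choosing this $c$ gives \ref{item_proof_additivity_hook_p_i_drops_rank_enough}, while \ref{item_proof_additivity_hook_p_prime_preserved} and \ref{item_proof_additivity_hook_p_i_bis_does_not_drop_rank_too_much} still hold by the preceding paragraph. The step I expect to require the most care is exactly this matching: a priori the vector $c$ that forces the rank to drop need not be one that preserves the $A'\otimes B'\otimes C'$-corner, so there could be a clash between \ref{item_proof_additivity_hook_p_i_drops_rank_enough} and \ref{item_proof_additivity_hook_p_prime_preserved}; the content of the third hypothesis is precisely that corner-preservation is automatic for \emph{any} $c$, which removes the clash. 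The rest is routine bookkeeping with the projections $\pi_{A'},\pi_{B'},\pi_{C'}$ and with the fact that contracting with $\gamma$ commutes with them and only involves the $C''$-part of the third factor.
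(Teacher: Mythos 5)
Your proof is correct and follows essentially the same route as the paper's: you define $p_{i+1}=p_i-p_i(\gamma)\otimes c$, use the second hypothesis to stay in $\Sigma$, the third hypothesis to get \ref{item_proof_additivity_hook_p_prime_preserved} for all $c$, Proposition~\ref{proposition_for_AFT_method_coordinate_free}\ref{item_AFT_any_choice_drops_rank_at_most_1} applied to $p''_i$ for \ref{item_proof_additivity_hook_p_i_bis_does_not_drop_rank_too_much}, and Proposition~\ref{proposition_for_AFT_method_coordinate_free}\ref{item_AFT_exists_choice_droping_rank} applied to $p_i$ to select a $c$ giving \ref{item_proof_additivity_hook_p_i_drops_rank_enough}. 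The one extra touch in your version, explicitly verifying $p_i(\gamma)\ne 0$ from $R(p''_i(\gamma))=1$ so that Proposition~\ref{proposition_for_AFT_method_coordinate_free} applies, is a sensible sanity check the paper leaves implicit.
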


\begin{proof}
  As in Proposition~\ref{proposition_for_AFT_method_coordinate_free}
    for $c\in (\gamma=1)$ set  $(\tilde{p}_i)_c = p_i - p_i(\gamma)\otimes c  \in A\otimes B\otimes \gamma^{\perp}$.
  We will pick $p_{i+1}$ among the $(\tilde{p}_i)_c$.
  In fact by  
  Proposition~\ref{proposition_for_AFT_method_coordinate_free}\ref{item_AFT_exists_choice_droping_rank}
     there exists a choice of $c$ such that  $p_{i+1}= (\tilde{p}_i)_c$ has rank less than $R(p_i)$, 
     that is, \ref{item_proof_additivity_hook_p_i_drops_rank_enough} is satisfied.
  On the other hand, since $\gamma$ is  in $(C'')^*$, we have $p''_{i+1} = \left(\widetilde{p''_i}\right)_{c''}$ 
    (where $c= c' + c''$ with $c'\in C'$ and $c''\in C''$)
     and by 
     Proposition~\ref{proposition_for_AFT_method_coordinate_free}\ref{item_AFT_any_choice_drops_rank_at_most_1}
     also \ref{item_proof_additivity_hook_p_i_bis_does_not_drop_rank_too_much} is satisfied.
  Property~\ref{item_proof_additivity_hook_p_prime_preserved} follows,
     as $\Sigma(\gamma)$ (in particular, $p_i(\gamma)$) 
     has no entries in $A'\otimes B'\otimes C'$.  
  Finally, $p_{i+1} \in \Sigma$ thanks to the assumption that $\gamma$ preserves $\Sigma$ and $\Sigma$ is a linear subspace.  
\end{proof}

The next lemma is the common first step in the proofs of 
  additivity for $(1,1)$ and $(1,2)$ hooks:  we construct a few initial elements of the sequence needed in 
  Lemma~\ref{lem_proving_additivity_via_substitution}.
  
\begin{lem}\label{lem_first_step_for_hooks}
   Suppose $W''\subset B''\otimes C''$ 
      is a $(1,f)$-hook shaped space for some integer $f$ and $W'\subset B'\otimes C'$ is arbitrary.
   Fix $\kk^1\subset B''$ and $\kk^f\subset C''$ 
      as in Definition~\ref{def_hook_shaped_space} for $W''$.
   Then there exists a sequence of tensors
   $p_0, p_1, p_2, \dotsc, p_{k}\in A\otimes B \otimes C$ for some $k$ that satisfies properties 
   \ref{item_proof_additivity_hook_p0_eq_p}--\ref{item_proof_additivity_hook_p_i_drops_rank_enough} 
   of Lemma~\ref{lem_proving_additivity_via_substitution} and in addition 
   $p''_{k}\in A''\otimes B''\otimes \kk^f$ 
   and for every $i$ 
   we have $ p_i\in A' \otimes B' \otimes C' \oplus 
      A'' \otimes \left(B''\otimes \kk^f  + \kk^1 \otimes C\right)$.
   In particular: 
   \begin{itemize}
    \item $p''_i((A'')^*)$ is a $(1,f)$-hook shaped space
              for every $i<k$,
          while $p''_k((A'')^*)$ is a $(0,f)$-hook shaped space.
    \item Every $p_i$ is ``almost'' a direct sum tensor, that is,
          $
             p_i = (p'_i\oplus p''_i) + q_i,
          $
          where
          \[
            q_i\in A''\otimes \kk^1 \otimes C'\subset A''\otimes B'' \otimes C'.
          \]
   \end{itemize}
\end{lem}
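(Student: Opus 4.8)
The plan is to build the initial segment of the sequence by applying Lemma~\ref{lem_constructing_next_tensor_in_the_sequence} repeatedly, using a well-chosen functional $\gamma \in C''$ at each step. First I would set up the ambient space $\Sigma$ in which all the $p_i$ will live: take $\Sigma = A' \otimes B' \otimes C' \oplus A'' \otimes (B'' \otimes \kk^f + \kk^1 \otimes C)$, where $\kk^1 \subset B''$ and $\kk^f \subset C''$ are the subspaces witnessing that $W''$ is $(1,f)$-hook shaped. Note $p_0 = p$ indeed lies in $\Sigma$: its $A'$-corner is $p' \in A'\otimes B'\otimes C'$ by hypothesis, and its $A''$-corner is $p''$, whose image $W'' = p''((A'')^*)$ sits inside $\kk^1\otimes C'' + B''\otimes \kk^f \subset B''\otimes\kk^f + \kk^1\otimes C$, so $p'' \in A''\otimes(B''\otimes\kk^f + \kk^1\otimes C)$. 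So property~\ref{item_proof_additivity_hook_p0_eq_p} holds. The key observation is that $\Sigma$ is preserved by any $\gamma \in (C'')^*$ that annihilates $\kk^f$: for such $\gamma$ the slice $\Sigma(\gamma)$ has no contribution from the $B''\otimes\kk^f$ part and no contribution from the $A'\otimes B'\otimes C'$ part (since $\gamma \in (C'')^*$ kills $C'$), so $\Sigma(\gamma) \subset A''\otimes\kk^1$, which in turn gives $\Sigma(\gamma)\otimes C \subset A''\otimes\kk^1\otimes C \subset \Sigma$, and moreover $\pi_{A''}\pi_{B''}(\Sigma(\gamma)) = 0$ automatically since $\Sigma(\gamma) \subset A''\otimes\kk^1$ already lives in the $A''$-block and $\kk^1 \subset B''$.

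Next I would iterate. Suppose $p_i \in \Sigma$ has been constructed with the $A''$-image $p''_i((A'')^*)$ being $(1,f)$-hook shaped, say contained in $\kk^1\otimes C'' + B''\otimes\kk^f$ but not in $B''\otimes\kk^f$ alone (otherwise we are already done — set $k=i$). Because it is not contained in $B''\otimes\kk^f$, there is a functional $\gamma \in (\kk^f)^\perp \cap (C'')^*$ such that $p''_i(\gamma) \ne 0$; and since $p''_i(\gamma)$ is a slice whose $B$-part necessarily lands in $\kk^1$ (the only part of $W''$ not killed by $\gamma$ sits in $\kk^1\otimes C''$), we get $R(p''_i(\gamma)) \le 1$, hence $=1$. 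This $\gamma$ satisfies all three bullet hypotheses of Lemma~\ref{lem_constructing_next_tensor_in_the_sequence} by the paragraph above, so it produces $p_{i+1} \in \Sigma \cap A\otimes B\otimes\gamma^\perp$ fulfilling \ref{item_proof_additivity_hook_p_prime_preserved}--\ref{item_proof_additivity_hook_p_i_drops_rank_enough}. Each step strictly shrinks the span of the $C''$-directions appearing in $p''_i$ outside $\kk^f$ (we pass to $\gamma^\perp$), so after at most $\dim C'' - f$ steps the image $p''_k((A'')^*)$ is forced into $B''\otimes\kk^f$, i.e. it becomes $(0,f)$-hook shaped, equivalently $p''_k \in A''\otimes B''\otimes\kk^f$. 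Since the $A'$-corner is unchanged throughout by~\ref{item_proof_additivity_hook_p_prime_preserved}, every $p_i$ stays in $\Sigma$, giving the stated containment $p_i \in A'\otimes B'\otimes C' \oplus A''\otimes(B''\otimes\kk^f + \kk^1\otimes C)$.

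Finally, for the two itemized conclusions: the hook-shape statements for $p''_i((A'')^*)$ follow from the induction just described. For the ``almost direct sum'' decomposition, observe that for $p_i \in \Sigma$ the only block outside $A'\otimes B'\otimes C' \oplus A''\otimes B''\otimes C''$ that $\Sigma$ allows is $A''\otimes\kk^1\otimes C'$ (the term $\kk^1\otimes C$ in the $A''$-factor decomposes as $\kk^1\otimes C' \oplus \kk^1\otimes C''$, and $\kk^1\otimes C''$ is already inside $B''\otimes C''$). Writing $q_i$ for the $A''\otimes\kk^1\otimes C'$ component and $p'_i\oplus p''_i$ for the rest recovers $p_i = (p'_i\oplus p''_i) + q_i$ with $q_i \in A''\otimes\kk^1\otimes C' \subset A''\otimes B''\otimes C'$, as claimed. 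The main obstacle I anticipate is bookkeeping: making sure that passing to $\gamma^\perp$ at each stage does not disturb the $A'$-corner or push $p_i$ out of $\Sigma$, and that the chosen $\gamma$ genuinely makes $R(p''_i(\gamma)) = 1$ rather than merely $\le 1$ — but both are handled by the structural fact that every relevant slice in the $\gamma \in (\kk^f)^\perp\cap(C'')^*$ direction is confined to $A''\otimes\kk^1$.
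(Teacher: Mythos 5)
Your proposal is correct and follows essentially the same path as the paper's proof: fix $\Sigma = A'\otimes B'\otimes C' \oplus A''\otimes\left(B''\otimes \kk^f + \kk^1\otimes C\right)$, repeatedly apply Lemma~\ref{lem_constructing_next_tensor_in_the_sequence} with a functional $\gamma\in(\kk^f)^\perp\cap(C'')^*$ not annihilating $p''_i$, and use $\Sigma(\gamma)\subset A''\otimes\kk^1$ to verify the three bullet hypotheses. Your explicit termination bound of $\dim C''-f$ steps and your verification of the two itemized conclusions are slightly more detailed than what the paper writes out, but do not alter the argument.
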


\begin{proof}
   To construct the sequence $p_i$ we recursively apply 
      Lemma~\ref{lem_constructing_next_tensor_in_the_sequence}.
   By our assumptions, 
   $p''\in A'' \otimes B''\otimes \kk^f 
      + A''\otimes \linspan{x}\otimes C''$
   for some choice of $x \in B''$ and fixed $\kk^f \subset C''$.
   We let 
   $\Sigma = A' \otimes B' \otimes C' \oplus A'' \otimes \left(B''\otimes \kk^f  + \linspan{x} \otimes C\right)$.
 
   Tensor $p_0$ is defined by \ref{item_proof_additivity_hook_p0_eq_p}.
   Suppose we have already constructed 
   $p_0,\dotsc, p_i$ and that $p''_i$ is not yet contained in 
     $A'' \otimes B''\otimes \kk^f$.
   Therefore  
     there exists a hyperplane $\gamma^{\perp}=(\gamma=0) \subset C$ 
     for some $\gamma\in (C'')^*\subset C^*$
     such that $\kk^f \subset \gamma^{\perp}$,
     but $p''_{i} \notin A'' \otimes  B''\otimes \gamma^{\perp} $.
  Equivalently, $p''_{i}(\gamma) \ne 0$ 
    and $p''_{i}(\gamma) \subset A''\otimes \linspan{x}$.
  In particular, $R(p''_{i}(\gamma))=1$ 
    and $\Sigma(\gamma) \subset A'' \otimes \linspan{x}$.
  Thus $\gamma$ preserves $\Sigma$ as in 
    Lemma~\ref{lem_constructing_next_tensor_in_the_sequence}
  and $\Sigma(\gamma)$ has no entries in $A'\otimes B'\otimes C'$.

  Thus we construct $p_{i+1}$ using 
    Lemma~\ref{lem_constructing_next_tensor_in_the_sequence}.
  Since we are gradually reducing the dimension of the third factor 
     of the tensor space containing 
     $p''_{i+1}$, eventually we will arrive at the case
     $p''_{i+1} \in A'' \otimes B''\otimes \kk^f$, proving the claim.
\end{proof}

\begin{proof}[Proof of Proposition~\ref{prop_1_1_hook_shaped}]
   We construct the sequence $p_i$ 
     as in Lemma~\ref{lem_proving_additivity_via_substitution}.
   The initial elements $\fromto{p_0}{p_k}$ of the sequence are given by Lemma~\ref{lem_first_step_for_hooks}.
   By the lemma and our assumptions, 
   $p_i''\in A'' \otimes B''\otimes \linspan{y} 
      + A''\otimes \linspan{x}\otimes C''$
   for some choices of $x \in B''$ and $y \in C''$ and 
   \[
       p_k \in A' \otimes B' \otimes C' 
           \oplus A''\otimes \big(\linspan{x} \otimes C' \oplus
           B'' \otimes \linspan{y}\big).
   \]
   Now suppose that we have constructed 
   $\fromto{p_k}{p_j}$ for some $j\ge k$ satisfying \ref{item_proof_additivity_hook_p_prime_preserved}--\ref{item_proof_additivity_hook_p_i_drops_rank_enough},
   such that
   \begin{equation*}
    p_j \in  \Sigma = A' \otimes B' \otimes C' 
           \oplus A''\otimes B \otimes (C'\oplus \linspan{y}).
   \end{equation*}
  If $p''_j=0$, 
     then by Lemma~\ref{lem_proving_additivity_via_substitution} we are done, as $j=r''$.
  So suppose $p''_j\ne 0$,
      and choose $\beta \in (B'')^*$ such that $p''_j(\beta) \ne 0$, 
     that is, $R(p''_j(\beta))=1$ since $p''_j(\beta) \in A'' \otimes \linspan{y}$.
  We produce $p_{j+1}$ using 
  Lemma~\ref{lem_constructing_next_tensor_in_the_sequence} 
     with the roles of $B$ and $C$ swapped
     (so also $\beta$ takes the role of $\gamma$ etc.).
     
  We stop after constructing $p_{r''}$ and thus the desired sequence exists and proves the claim.
\end{proof}

In the rest of this section we will show that an analogous statement 
  holds for $(1,2)$-hook shaped spaces under an additional assumption
  that the base field is algebraically closed.
We need the following lemma (false for nonclosed fields),
  whose proof is  a straightforward dimension count, 
  see also \cite[Prop.~3.2.11]{rupniewski_mgr}.

\begin{lem}\label{lem_dim_2_have_rank_one_matrix}
    Suppose $\kk$ is algebraically closed (of any characteristic) 
       and $p\in A\otimes B \otimes \kk^2$ and $p\ne 0$.
    Then at least one of the following holds:
    \begin{itemize}
       \item there exists a rank one matrix in $p(A^*)\subset B\otimes \kk^2$, or
       \item for any $x\in B$ there exists a rank one matrix in $p(x^{\perp}) \subset A \otimes \kk^2$, where $x^{\perp} \subset B^*$ is the hyperplane defined by $x$.
    \end{itemize}
\end{lem}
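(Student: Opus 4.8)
The plan is to fix a basis of $\kk^2$, let $X_0,X_1\in A\otimes B$ be the corresponding slices of $p$, viewed as linear maps $X_0,X_1\colon A^*\to B$, and translate both alternatives into statements about the pencil $M_{s,t}:=sX_0+tX_1$. For $\alpha\in A^*$ the matrix $p(\alpha)\in B\otimes\kk^2$ has columns $X_0(\alpha),X_1(\alpha)$, so it is a rank one matrix precisely when these two vectors of $B$ are proportional but not both zero; hence the first alternative fails if and only if every $M_{s,t}$ with $(s,t)\ne 0$ has kernel equal to $K:=\ker X_0\cap\ker X_1=\ker\bigl(p\colon A^*\to B\otimes\kk^2\bigr)$, i.e. the pencil has constant rank $k:=\dim A-\dim K$. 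Dually, writing $X_i^T\colon B^*\to A$ for the transposed slices, $L_{[s:t]}:=\operatorname{im}M_{s,t}$ and $L:=\operatorname{im}X_0+\operatorname{im}X_1=\sum_{[s:t]}L_{[s:t]}$, for $\beta\in B^*$ the matrix $p(\beta)\in A\otimes\kk^2$ is a rank one matrix precisely when some $M_{s,t}^T$ annihilates $\beta$ and $\beta\notin L^\perp$; taking perpendiculars and using $x^\perp\cap L_{[s:t]}^\perp=(\langle x\rangle+L_{[s:t]})^\perp$ one obtains, for fixed $x\in B$: the second alternative holds for $x$ iff $L\not\subseteq\langle x\rangle+L_{[s:t]}$ for some $[s:t]\in\PP^1$. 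These two translations — in particular keeping track of the ``not both zero'' clause, which is what makes the space $L$ appear — are the routine but slightly fiddly part of the argument.

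Assume now the first alternative fails; I must produce, for every $x\in B$, an $[s:t]$ with $L\not\subseteq\langle x\rangle+L_{[s:t]}$. By the above all $L_{[s:t]}$ have the same dimension $k$, so $[s:t]\mapsto L_{[s:t]}$ is a morphism $\phi\colon\PP^1\to\Gr(k,B)$ with image inside $\Gr(k,L)$. First I rule out that $\phi$ is constant: if all $L_{[s:t]}$ equal a fixed $L_0$ (necessarily non-zero, since $X_0,X_1\ne 0$ once the first alternative fails and $p\ne0$), then $X_0$ and $X_1$ each induce an isomorphism $A^*/K\to L_0$, and the composite of the one induced by $X_1$ with the inverse of the one induced by $X_0$ is an operator on the non-zero space $L_0$, which over the algebraically closed field $\kk$ has an eigenvector; unwinding this yields $\alpha\notin K$ and $\lambda\in\kk$ with $X_1(\alpha)=\lambda X_0(\alpha)$, so $p(\alpha)$ is a non-zero rank one matrix, contradicting the failure of the first alternative. (This is the only step that uses algebraic closedness.)

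So $\phi$ is non-constant; then the $L_{[s:t]}$ are not all equal, hence $\dim L\ge k+1$. If $\dim L\ge k+2$ then $\dim(\langle x\rangle+L_{[s:t]})\le k+1<\dim L$ for every $x$ and every $[s:t]$, and we are done. Otherwise $\dim L=k+1$, so each $L_{[s:t]}$ is a hyperplane of $L$ and $\Gr(k,L)\cong\PP(L^*)$ has dimension $k$; suppose for contradiction that some $x$ has $L\subseteq\langle x\rangle+L_{[s:t]}$ for all $[s:t]$. A dimension count then forces $x\in L$, $x\ne 0$, and $x\notin L_{[s:t]}$ for every $[s:t]$, i.e. the hyperplane of $\PP(L^*)$ dual to $[x]$ is disjoint from the curve $C:=\phi(\PP^1)\subset\PP(L^*)$. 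That is impossible: $C$ is a complete curve of positive dimension and would then lie in an affine open subset of $\PP(L^*)$. This contradiction finishes the proof.

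I expect the main obstacle to be the last case $\dim L=k+1$: once the two translations are in place the other possibilities collapse either to a plain inequality of dimensions or to the eigenvector argument, whereas this case genuinely needs the geometric input that a complete curve in $\PP^k$ meets every hyperplane. A secondary point of care is verifying that the constant-rank conclusion is actually in force before one speaks of the morphism $\phi$ to the Grassmannian.
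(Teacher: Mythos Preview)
Your argument is correct, but it takes a genuinely different route from the paper's. The paper's proof is a two--line dimension count: after reducing to the concise case, if $\dim A\ge\dim B$ then $\PP(p(A^*))$ and the Segre $\PP B\times\PP^1$ have complementary dimensions in $\PP(B\otimes\kk^2)$ and hence meet; if $\dim A<\dim B$ then $\PP(p(B^*))\cap(\PP A\times\PP^1)$ is positive--dimensional, so every hyperplane $\PP(p(x^{\perp}))$ of $\PP(p(B^*))$ still meets it. No pencil analysis, no Grassmannian, no case split on $\dim L$.

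Your approach trades brevity for structure: the constant--rank reformulation, the map $\phi\colon\PP^1\to\Gr(k,L)$, and the three cases (constant $\phi$; $\dim L\ge k+2$; $\dim L=k+1$) make explicit exactly which linear--algebraic obstruction is being overcome at each step, which is informative even if longer. The paper's proof, by contrast, hides the mechanism inside the projective dimension theorem.

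One small correction to a side remark: your parenthetical ``This is the only step that uses algebraic closedness'' is not accurate. In the final case you need, for a given $x$, a $\kk$--rational point $[s:t]\in\PP^1$ with $\phi([s:t])$ on the hyperplane $H_x$. Over a non--closed field the scheme $\phi^{-1}(H_x)\subset\PP^1$ is non--empty (your properness argument shows that), but it need not have a $\kk$--point; e.g.\ the real conic $[s:t]\mapsto[s^2-t^2:2st:s^2+t^2]$ misses the line $z=0$ over $\RR$. So algebraic closedness is used twice, not once. This does not affect the validity of the proof under the stated hypothesis.
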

\begin{proof}
If $p$ is not $\kk^2$-concise, then both claims trivially hold (except if rank of $p$ is one, then only the first claim holds).
Thus without loss of generality, we may suppose $p$ is concise by replacing $A$ and $B$ with smaller spaces if necessary.
If $\dim A\ge \dim B$, 
  then the projectivisation of the image 
  $\PP(p(A^*))\subset \PP(B\otimes \kk^2)$ intersects the Segre variety $\PP(B) \times \PP^1$ 
  by the dimension count \cite[Thm~I.7.2]{hartshorne} 
  (note that here we use that the base field $\kk$ is algebraically closed).
Otherwise, $\dim A < \dim B$ and the intersection 
  \[
    \PP(p(B^*)) \cap (\PP(A) \times \PP^1)\subset
        \PP(A\otimes \kk^2)
  \]
  has positive dimension by the same dimension count.
  In particular, any hyperplane $\PP(p(x^{\perp})) \subset \PP(p(B^*))$ also intersects the Segre variety.
\end{proof}

The next proposition reproves 
  (under the additional assumption that $\kk$ is algebraically closed) 
  and slightly strengthens the theorem of JaJa-Takche
  \cite{jaja_takche_Strassen_conjecture},
  which can be thought of as a theorem about $(0,2)$-hook shaped spaces.

\begin{prop}\label{prop_1_2_hook_shaped}
   Suppose $\kk$ is algebraically closed, $W''\subset B''\otimes C''$ is $(1,2)$-hook shaped and $W'\subset B'\otimes C'$ 
      is an arbitrary subspace.
   Then the additivity of the rank holds for $W'\oplus W''$.
\end{prop}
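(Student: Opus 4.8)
The plan is to imitate the proof of Proposition~\ref{prop_1_1_hook_shaped}, constructing a sequence $p_0,p_1,\dotsc,p_{r''}$ satisfying the hypotheses of Lemma~\ref{lem_proving_additivity_via_substitution}, but now we must cope with a second ``wide'' direction of the hook in $C''$. Fix $\kk^1\subset B''$ and $\kk^2\subset C''$ realizing the $(1,2)$-hook shape of $W''$. First I would apply Lemma~\ref{lem_first_step_for_hooks} with $f=2$: this produces initial terms $p_0,\dotsc,p_k$ satisfying \ref{item_proof_additivity_hook_p0_eq_p}--\ref{item_proof_additivity_hook_p_i_drops_rank_enough}, with $p_k''\in A''\otimes B''\otimes \kk^2$ and every $p_i$ lying in $A'\otimes B'\otimes C'\oplus A''\otimes(B''\otimes\kk^2+\kk^1\otimes C)$. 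So after this first phase we are reduced to the situation where $p_j''$ lives in the $(0,2)$-hook shaped space $A''\otimes B''\otimes\kk^2$, i.e.\ essentially the JaJa--Takche situation for a two-dimensional factor $\kk^2$, except that we still carry the ``contamination'' term $q_j\in A''\otimes\kk^1\otimes C'$ and the rigid corner $W'$ in $A'\otimes B'\otimes C'$.

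Next I would run the second phase: as long as $p_j''\ne 0$, I produce $p_{j+1}$ by again invoking Lemma~\ref{lem_constructing_next_tensor_in_the_sequence}, but now the element $\gamma$ (or $\beta$) used for the substitution is chosen using Lemma~\ref{lem_dim_2_have_rank_one_matrix} applied to the tensor $p_j''\in A''\otimes B''\otimes\kk^2$ (here algebraic closedness of $\kk$ enters, exactly as in Lemma~\ref{lem_dim_2_have_rank_one_matrix}). That lemma gives a dichotomy: either there is a rank one matrix in $p_j''((A'')^*)\subset B''\otimes\kk^2$, in which case I can pick $\beta\in(B'')^*$ with $R(p_j''(\beta))=1$ and $p_j''(\beta)\in A''\otimes\kk^2$, and I cut down using the roles of $B$ and $C$ swapped; or, for every $x\in B''$ there is a rank one matrix in $p_j''(x^\perp)\subset A''\otimes\kk^2$, in which case for a suitable $\gamma\in(\kk^2)^*\subset(C'')^*$ I have $R(p_j''(\gamma))=1$ and I cut down the $C$-direction. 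In either case I need to set $\Sigma$ large enough to be preserved by the chosen $\gamma$ (resp.\ $\beta$) and small enough that $\Sigma(\gamma)$ has no entries in $A'\otimes B'$; the natural choice is $\Sigma=A'\otimes B'\otimes C'\oplus A''\otimes B\otimes\kk^2$ when cutting $C$, and the analogous swapped space when cutting $B$, together with the $\kk^1\otimes C'$ contamination slot. Each step decreases $R(p_j)$ by at least one and decreases $R(p_j'')$ by at most one and fixes $p_j'$, so after $r''$ steps in total $p_{r''}''=0$ and Lemma~\ref{lem_proving_additivity_via_substitution} yields $R(W)=R(W')+R(W'')$.

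The main obstacle I anticipate is bookkeeping the interaction between the two substitution directions in the second phase: unlike the $(1,1)$ case, where after the first phase only a single slice in each of $B''$ and $C''$ remains, here the two-dimensional $\kk^2$ in $C''$ does not shrink at each step, so I cannot simply ``exhaust'' one direction and then move to the other in a fixed order. Instead I must re-apply the dichotomy of Lemma~\ref{lem_dim_2_have_rank_one_matrix} at every step and argue that the process still terminates after exactly $r''=R(W'')$ reductions --- this follows from \ref{item_proof_additivity_hook_p_i_bis_does_not_drop_rank_too_much} and \ref{item_proof_additivity_hook_p_i_drops_rank_enough} forcing $R(p_i'')$ to be nonincreasing by steps of at most one from $r''$ down to $0$, so the counter is controlled, but one must be careful that $\Sigma$ is re-chosen correctly at each step depending on which branch of the dichotomy occurs, and that the contamination term $q_i\in A''\otimes\kk^1\otimes C'$ never leaks into the $A'\otimes B'\otimes C'$ corner (it does not, since $\kk^1\subset B''$ is complementary to $B'$). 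A secondary subtlety is verifying that the hypotheses of Lemma~\ref{lem_constructing_next_tensor_in_the_sequence} --- in particular $\pi_{A''}\pi_{B''}(\Sigma(\gamma))=0$ --- genuinely hold for the $\Sigma$ chosen in each branch; this is a direct check once the correct $\Sigma$ is written down, but it is the place where an incorrect choice of $\Sigma$ would silently break the argument.
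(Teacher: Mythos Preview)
Your overall strategy matches the paper's: apply Lemma~\ref{lem_first_step_for_hooks} with $f=2$, then in the second phase repeatedly invoke Lemma~\ref{lem_dim_2_have_rank_one_matrix} on $p_j''\in A''\otimes B''\otimes\kk^2$ to locate a rank-one slice and feed it to Lemma~\ref{lem_constructing_next_tensor_in_the_sequence}. But you have misread what the dichotomy actually produces, and as written neither branch gives the functional you claim.

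In the first case of Lemma~\ref{lem_dim_2_have_rank_one_matrix}, a rank-one matrix in $p_j''((A'')^*)$ means there is $\alpha\in(A'')^*$ with $R(p_j''(\alpha))=1$; this does \emph{not} furnish any $\beta\in(B'')^*$ with $R(p_j''(\beta))=1$. In the second case, the lemma hands you $\beta\in x^\perp\subset(B'')^*$ with $R(p_j''(\beta))=1$; it does not produce any $\gamma\in(\kk^2)^*$. Accordingly, the paper cuts in the $A$-direction (swapping $A$ and $C$ in Lemma~\ref{lem_constructing_next_tensor_in_the_sequence}) in the first case, and in the $B$-direction in the second---never in the $C$-direction during the second phase. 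Moreover, in the $B$-cut the constraint $\beta\in x^\perp$ (with $\langle x\rangle=\kk^1\subset B''$) is not optional: if $\beta(x)\ne 0$ then $\Sigma(\beta)$ receives a nonzero contribution from the contamination slot $A\otimes\langle x\rangle\otimes C'$, which survives into $A'\otimes C'$ and violates the hypothesis $\pi_{A''}\pi_{C''}(\Sigma(\beta))=0$. Finally, the paper uses a single fixed
\[
  \Sigma = A'\otimes B'\otimes C' \ \oplus\ A\otimes\bigl(B\otimes\kk^2 \ \oplus\ \langle x\rangle\otimes C'\bigr),
\]
which is preserved by both $\alpha\in(A'')^*$ and any $\beta\in x^\perp\subset(B'')^*$, so your worry about having to re-choose $\Sigma$ at each step disappears once the cutting directions are corrected.
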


\begin{proof}
   We will use Lemmas~\ref{lem_proving_additivity_via_substitution},
   \ref{lem_constructing_next_tensor_in_the_sequence}
   and \ref{lem_first_step_for_hooks} again.
   That is, we are looking for a sequence 
      $p_0, \dotsc, p_{r''}\in A\otimes B \otimes C$ 
     with the properties 
     \ref{item_proof_additivity_hook_p0_eq_p}--\ref{item_proof_additivity_hook_p_i_drops_rank_enough},
   and the initial elements $\fromto{p_0}{p_k}$ are constructed in such a way that
   $p_k \in  A' \otimes B' \otimes C' 
           \oplus A''\otimes 
           \big(\linspan{x} \otimes C' \oplus
           B'' \otimes \kk^2 \big)$. 
   Here $x\in B''$ is such that 
     $W'' \subset \linspan{x} \otimes C'' +B''\otimes \kk^2$.

   We have already ``cleaned'' the part of the hook of size $1$, 
      and now we work with the remaining space of $\bfb''\times 2$ matrices.
   Unfortunately, cleaning $p''_i$ 
      produces rubbish in the other parts of the tensor, 
      and we have to  control the rubbish so that it does not affect $p'_i$, see \ref{item_proof_additivity_hook_p_prime_preserved}.
    Note that what is left to do is not just the plain case of Strassen's additivity in the case of $\bfc''=2$
    proven in \cite{jaja_takche_Strassen_conjecture} since $p_k$ may have already nontrivial entries 
    in another block, the one corresponding to $A''\otimes B''\otimes C'$ 
      (the small tensor $q_k$ in the statement of Lemma~\ref{lem_first_step_for_hooks}).

   We set $\Sigma =A'\otimes B' \otimes C'    \oplus     A\otimes \left(B\otimes \kk^2     \oplus    \linspan{x} \otimes C'\right)$.
  To construct $p_{j+1}$ we use Lemma~\ref{lem_dim_2_have_rank_one_matrix} 
     (in particular, here we exploit the algebraic closedness of $\kk$). 
  Thus either there exists $\alpha \in (A'')^*$ such that $R(p_j''(\alpha)) =1$, 
    or there exists $\beta \in x^{\perp} \subset (B'')^*$ such that $R(p_j''(\beta)) =1$.
  In both cases we apply 
  Lemma~\ref{lem_constructing_next_tensor_in_the_sequence}
  with the roles of $A$ and $C$ swapped or the roles of $B$ and $C$ swapped.
  The conditions in the lemma are straightforward to verify.

We stop after constructing $p_{r''}$ and thus the desired sequence exists and proves the claim.
\end{proof}

\section{Rank one matrices and additivity of the tensor rank}\label{sec_rank_one_matrices_and_additive_rank}

As hinted by the proof of Proposition~\ref{prop_1_2_hook_shaped},
   as long as we have a rank one matrix in the linear space $W'$ or $W''$, we have a good starting point for an attempt 
   to prove the additivity of the rank.
Throughout this section we will make a formal statement out of this observation and prove that if there is a rank one matrix in the linear spaces,
   then either the additivity holds or there exists a ``smaller'' example of failure of the additivity. 
In Section~\ref{sect_proofs_of_main_results_on_rank} we exploit several versions of this claim 
   in order to prove Theorem~\ref{thm_additivity_rank_intro}.

Throughout this section we follow Notations~\ref{notation_V_Seg} (denoting the rank one elements in a vector space by 
                                                                             the subscript $\cdot_{Seg}$),
    \ref{notation} (introducing the vector spaces $\fromto{A}{C''}$ and their dimensions $\fromto{\bfa}{\bfc''}$),
    \ref{notation_2} (defining a direct sum tensor $p=p'\oplus p''$ and the corresponding vector spaces $W, W', W''$),
       and also~\ref{notation_projection} (which explains the conventions for projections $\fromto{\pi_{A'}}{\pi_{C''}}$ 
       and vector spaces $\fromto{E'}{F''}$, which measure how much the fixed decomposition $V$ of $W$ sticks out 
       from the direct sum $B'\otimes C' \oplus B''\otimes C''$).

\subsection{Combinatorial splitting of the decomposition}

We carefully analyse 
   the structure of the rank one matrices in $V$.
We will distinguish seven types of such matrices.

\begin{lem}\label{lemma_reduction}
Every element of $V_{\Seg}\subset \PP(B\ts C)$  lies in the projectivisation of one of the following subspaces
  of $B\ts C$:
\begin{enumerate}
\item \label{item_prime_bis}
$B'\otimes C'$, $B''\otimes C''$,  \hfill (\Prime, \Bis)
\item  \label{item_vertical_and_horizontal}
$E'\otimes (C'\oplus F'')$,
$E''\otimes (F'\oplus C'')$,\hfill (\HL, \HR)\\
$(B'\oplus E'')\otimes F'$,
$(E'\oplus B'')\otimes F''$,\hfill (\VL, \VR) \item \label{item_mixed}
$(E'\oplus E'')\otimes (F'\oplus F'')$. \hfill (\Mix)
\end{enumerate}
\end{lem}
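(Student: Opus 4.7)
The plan is to pick an arbitrary nonzero rank one tensor $s = b\otimes c \in V$, decompose $b = b'+b''$ with $b'\in B'$, $b''\in B''$, and $c = c'+c''$ with $c'\in C'$, $c''\in C''$, and then read off constraints on the four vectors $b',b'',c',c''$ by applying the four projections $\pi_{B'},\pi_{B''},\pi_{C'},\pi_{C''}$ to $s$ and invoking the minimality properties of $E',E'',F',F''$ from Notation~\ref{notation_projection}.

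Concretely, $\pi_{C'}(s) = b\otimes c''$ lies in $(E'\oplus B'')\otimes C''$ by the definition of $E'$, so whenever $c''\ne 0$ the injectivity of the map $x\mapsto x\otimes c''$ on $B$ forces $b\in E'\oplus B''$, i.e.\ $b'\in E'$. The remaining three projections give by identical reasoning the implications
\[
c'\ne 0\ \Rightarrow\ b''\in E'',\qquad b''\ne 0\ \Rightarrow\ c'\in F',\qquad b'\ne 0\ \Rightarrow\ c''\in F''.
\]
Next I would split into cases according to which of $b',b'',c',c''$ vanish. Since $s\ne 0$, at least one of $b',b''$ and at least one of $c',c''$ is nonzero, leaving nine patterns. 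In each pattern the active implications above immediately place $s$ in one of the seven subspaces of the statement: $\Prime$ and $\Bis$ arise when $s$ already lies in $B'\otimes C'$ or $B''\otimes C''$; $\HL$ and $\HR$ arise when $b''=0$ or $b'=0$ but $c$ may still have components in both summands of $C$ (one gets $c\in C'\oplus F''$ or $c\in F'\oplus C''$, combined with $b\in E'$ or $b\in E''$ coming from the surviving implications); $\VL$ and $\VR$ arise symmetrically when $c''=0$ or $c'=0$ while $b$ has components in both $B'$ and $B''$; finally, when all four of $b',b'',c',c''$ are nonzero all four implications activate simultaneously, giving $b\in E'\oplus E''$ and $c\in F'\oplus F''$, which is precisely the $\Mix$ subspace.

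The main obstacle is purely bookkeeping: one has to verify that each of the nine vanishing patterns produces exactly one of the seven listed subspaces, with particular care in the generic case where all four components are nonzero and must land in $\Mix$. No ingredients beyond the minimality definitions of $E',E'',F',F''$ and elementary linear algebra are needed.
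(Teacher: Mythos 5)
Your proposal is correct and follows the paper's proof essentially verbatim: you project $b\otimes c$ under the four natural projections, use the minimality of $E',E'',F',F''$ to extract the same four implications (nonvanishing of one component of $b$ or $c$ forces the opposite component of the other vector into the corresponding $E$- or $F$-space), and then split into cases on the vanishing pattern. The only cosmetic difference is that the paper organizes the case analysis into five cases (letting the $\HL/\HR$ and $\Prime/\Bis$ cases absorb sub-patterns) rather than your nine, but the underlying reasoning is identical.
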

The spaces in \ref{item_prime_bis} are purely contained in the original direct summands, hence, in some sense, they are the easiest to deal with (we will show how to ``get rid'' of them and construct a smaller example justifying a potential lack of additivity).%
\footnote{The word \Bis{} comes from the Polish way of pronouncing the ${}''$ symbol.}
The spaces in \ref{item_vertical_and_horizontal} 
  stick out of the original summand, but only in one direction, either horizontal (\HL, \HR), or vertical (\VL, \VR)\footnote{Here the letters ``H, V, L, R'' stand for 
  ``horizontal, vertical, left, right'' respectively.}.  
The space in \ref{item_mixed} is mixed and it sticks out in all directions. It is the most difficult to deal with and we expect that the typical counterexamples to the additivity of the rank will have mostly (or only) such mixed matrices in their minimal decomposition. 
The mutual configuration and layout of those spaces in the case $\bfb',\bfb'',\bfc',\bfc''=3$, $\bfe',\bfe'',\bff',\bff''=1$ is illustrated in Figure~\ref{fig_layout_of_Prim__Mixed}.

\begin{figure}
\centering
\usetikzlibrary{matrix,decorations.pathreplacing,calc,fit,backgrounds}

\pgfmathsetmacro{\myscale}{1.4}
\pgfkeys{tikz/mymatrixenv/.style={decoration={brace},every left delimiter/.style={xshift=8pt},every right delimiter/.style={xshift=-8pt}}}
\pgfkeys{tikz/mymatrix/.style={matrix of math nodes,nodes in empty cells,
left delimiter={[},right delimiter={]},inner sep=1pt,outer sep=1.5pt,
column sep=8pt,row sep=8pt,nodes={minimum width=20pt,minimum height=10pt,
anchor=center,inner sep=0pt,outer sep=0pt,scale=\myscale,transform shape}}}
\pgfkeys{tikz/mymatrixbrace/.style={decorate,thick}}

\newcommand*\mymatrixbraceright[4][m]{
    \draw[mymatrixbrace] (#1.west|-#1-#3-1.south west) -- node[left=2pt] {#4} (#1.west|-#1-#2-1.north west);
}
\newcommand*\mymatrixbraceleft[4][m]{
    \draw[mymatrixbrace] (#1.east|-#1-#2-1.north east) -- node[right=2pt] {#4} (#1.east|-#1-#2-1.south east);
}
\newcommand*\mymatrixbracetop[4][m]{
    \draw[mymatrixbrace] (#1.north-|#1-1-#2.north west) -- node[above=2pt] {#4} (#1.north-|#1-1-#3.north east);
}
\newcommand*\mymatrixbracebottom[4][m]{
    \draw[mymatrixbrace] (#1.south-|#1-1-#2.north east) -- node[below=2pt] {#4} (#1.south-|#1-1-#3.north west);
}

\tikzset{greenish/.style={
    fill=green!50!lime!60,draw opacity=0.5,
    draw=black!70!green!50!lime!60,fill opacity=0.1,
  },
  cyanish/.style={
    fill=cyan!90!blue!60, draw opacity=0.5,
    draw=black!70!blue!70!cyan!30,fill opacity=0.1,
  },
  orangeish/.style={
    fill=orange!90, draw opacity=0.5,
    draw=orange!90, fill opacity=0.3,
  },
  brownish/.style={
    fill=brown!70!orange!40, draw opacity=0.5,
    draw=brown, fill opacity=0.3,
  },
  purpleish/.style={
    fill=violet!90!pink!20, draw opacity=0.5,
    draw=violet, fill opacity=0.3,    
  }}

\[
    \begin{tikzpicture}[baseline={-0.5ex},mymatrixenv]
        \matrix [mymatrix,inner sep=4pt] (m)  
        {
    v_{1,1}  &  v_{1,2} & v_{1,3}   &  \textcolor{white}{v_{1,4}}   &  &  \textcolor{white}{v_{1,6}} \\
    v_{2,1}  & v_{2,2} & v_{2,3} & & &   \\
    v_{3,1}  & v_{3,2} &  v_{3,3} & v_{3,4} & &   \\
    \textcolor{white}{v_{4,1}}    & &  v_{4,3} &  v_{4,4}  & v_{4,5}  & v_{4,6}   \\
    & & & v_{5,4} & v_{5,5} & v_{5,6}   \\
    \textcolor{white}{v_{6,1}}  & &  & v_{6,4}  & v_{6,5} & v_{6,6}   \\    
    };

    \begin{scope}[on background layer,rounded corners,every node/.append style={line width=2pt}]
     \node [fit=(m-1-1) (m-3-3),greenish,inner xsep=2.5pt,inner ysep=3pt]{};
     \node [fit=(m-1-3) (m-4-3),purpleish,inner xsep=1.5pt,inner ysep=1.5pt]{};
     \node [fit=(m-3-1) (m-3-4),brownish,inner xsep=0.5pt,inner ysep=0pt]{};
     \node [fit=(m-3-3) (m-4-4),orangeish]{};
     \node [fit=(m-3-4) (m-6-4),purpleish,inner xsep=1.5pt,inner ysep=1.0pt,yshift=1pt]{};
     \node [fit=(m-4-3) (m-4-6),brownish,inner xsep=0pt,inner ysep=0pt]{};
     \node [fit=(m-4-4) (m-6-6),cyanish,inner xsep=2.5pt,inner ysep=3pt,xshift=-1pt]{};
    \end{scope}

    \begin{scope}[every node/.append style={scale=\myscale,transform
    shape},very thick]
        \mymatrixbraceright{1}{3}{$B'$}
        \mymatrixbraceright{4}{6}{$B''$}
        \mymatrixbracetop{1}{3}{$C'$}
        \mymatrixbracetop{4}{6}{$C''$}
        \mymatrixbracebottom{3}{3}{$F'$}
        \mymatrixbracebottom{4}{4}{$F''$}
        \mymatrixbraceleft{3}{3}{$E'$}
        \mymatrixbraceleft{4}{4}{$E''$}
    \end{scope} 
\end{tikzpicture}
\]
\caption{We use Notation~\ref{notation_projection}. 
In the case $\bfb',\bfb'',\bfc',\bfc''=3$, $\bfe',\bfe'',\bff',\bff''=1$
choose a basis of $E'$ and a completion to a basis of $B'$ and, 
similarly, bases for  $(E'', B''), (F', C'),(F'',C'')$. We can represent the elements of $V_{\Seg} \subset B \otimes C$ as matrices in one of the following subspaces:
\Prime{} (corresponding to the top-left green rectangle), 
\Bis{} (bottom-right blue rectangle), 
\VL{} (purple with entries $v_{1,3},\dots,v_{4,3}$), 
\VR{} (purple with entries $v_{3,4},\dots,v_{6,4}$), 
\HL{} (brown with entries $v_{3,1},\dots,v_{3,4}$),
\HR{} (brown with entries $v_{4,3},\dots,v_{4,6}$),
and \Mix{} (middle orange square with entries $\fromto{v_{3,3}}{v_{4,4}}$).
} \label{fig_layout_of_Prim__Mixed}
\end{figure}

\begin{proof}[Proof of Lemma~\ref{lemma_reduction}]
Let $b\ts c\in V_{\Seg}$ be a matrix of rank one. Write $b=b'+b''$ and $c=c'+c''$, where $b'\in B', b''\in B'', c'\in C'$ and $c''\in C''$.  We consider the image of $b\ts c$ via the four natural projections introduced in 
  Notation~\ref{notation_projection}:
\begin{subequations}
  \begin{alignat}{3}
     \pi_{B' }(b\ts c)&=b''\ts c  & \ \in \ && B''&\ts(F' \oplus C''),
     \label{equ_pi_B_prime}\\
     \pi_{B''}(b\ts c)&=b' \ts c  & \ \in \ && B' &\ts(C' \oplus F''),
     \label{equ_pi_B_bis}\\
     \pi_{C' }(b\ts c)&=b  \ts c''& \ \in \ && (E'\oplus B'')&\ts C'', \text{ and}
     \label{equ_pi_C_prime}\\
     \pi_{C''}(b\ts c)&=b  \ts c' & \ \in \ && (B'\oplus E'')&\ts C'.
     \label{equ_pi_C_bis}
  \end{alignat}
\end{subequations}
Notice that $b'$ and $b''$ cannot be simultaneously zero, since $b\ne 0$.
Analogously, $(c',c'')\neq (0,0)$.

Equations~\eqref{equ_pi_B_prime}--\eqref{equ_pi_C_bis} prove that the non-vanishing of one of $b', b'', c', c''$ induces a restriction on another one. For instance, if $b'\ne 0$, then by \eqref{equ_pi_B_bis} we must have $c''\in F''$.
Or, if $b''\ne 0$, then \eqref{equ_pi_B_prime} forces $c' \in F'$,
  and so on.
Altogether we obtain the following cases:
\begin{itemize}
\item[(1)] If $b',b'',c',c''\neq0$, then  $b\ts c\in (E'\oplus E'')\ts (F'\oplus F'')$ (case \Mix).
\item[(2)] if $b',b''\neq 0$ and $c'=0$, then $b\ts c=b\ts c''\in (E'\oplus B'')\ts  F''$ (case \VR).
\item[(3)] if $b',b''\neq0$ and $c''=0$, then $b\ts c=b\ts c'\in (B'\oplus E'')\ts  F'$ (case \VL).
\item[(4)] If $b'=0$, then either $c'=0$ and therefore $b\ts c=b''\ts c''\in B''\ts C''$ (case \Bis), or $c'\neq0$ and $b\ts c=b''\ts c\in E''\ts (F'\oplus C'')$ (case \HR).
\item[(5)] If $b''=0$, then either $c''=0$ and thus $b\ts c=b'\ts c'\in B'\ts C'$ (case \Prime), or $c''\neq0$ and $b\ts c=b''\ts c\in E'\ts (C'\oplus F'')$ (case \HL).
\end{itemize}
This concludes the proof.
\end{proof}

As in Lemma~\ref{lemma_reduction} every element of $V_{Seg}\subset \PP(B\otimes C)$ 
   lies in one of seven subspaces of $B\otimes C$. 
These subspaces may have nonempty intersection.
We will now explain our convention with respect to choosing a basis of $V$ consisting of elements of $V_{\Seg}$.

Here and throughout the article by $\sqcup$ we denote the disjoint union.

\begin{notation}\label{notation_Prime_etc}
  We choose a basis $\ccB$ of $V$ in such a way that:
  \begin{itemize}
   \item $\ccB$ consist of rank one matrices only,
   \item $\ccB = \Prime \sqcup \Bis \sqcup \HL \sqcup \HR \sqcup \VL \sqcup \VR \sqcup \Mix$, 
           where each of \Prime, \Bis, \HL, \HR, \VL, \VR, and \Mix{} 
           is a finite set of rank one matrices of the respective type as in Lemma~\ref{lemma_reduction} 
           (for instance, $\Prime \subset B'\otimes C'$, $\HL\subset E'\otimes (C'\oplus F'')$, etc.).
   \item $\ccB$ has as many elements of $\Prime$ and $\Bis$ as possible, subject to the first two conditions,
   \item $\ccB$ has as many elements of $\HL$, $\HR$, $\VL$ and $\VR$ as possible, subject to all of the above conditions.
  \end{itemize}
  Let $\prim$ be the number of elements of $\Prime$ (equivalently, $\prim = \dim\linspan{\Prime}$)
     and analogously define $\bis$, $\hl$, $\hr$, $\vl$, $\vr$, and $\mix$.
  The choice of $\ccB$ need not  be unique, but we fix one for the rest of the article.
  Instead, the numbers $\prim$, $\bis$, and $\mix$ are uniquely determined by $V$
  (there may be some non-uniqueness in dividing between  $\hl$, $\hr$, $\vl$, $\vr$).
\end{notation}

Thus to each decomposition we associated a sequence of seven non-negative integers $(\fromto{\prim}{\mix})$.
We now study the inequalities between these integers and exploit them to get theorems about the additivity of the rank.
  
\begin{prop}\label{prop_projection_inequalities}
 In Notations~\ref{notation_projection} and \ref{notation_Prime_etc} the following inequalities hold:
 \begin{enumerate}
  \item \label{item_projection_inequality_short_prim}
        $\prim + \hl + \vl 
  + \min\big(\mix,\bfe' \bff'  \big) \geq R(W')$,
  \item \label{item_projection_inequality_short_bis}
        $\bis + \hr  + \vr 
  + \min\big(\mix,\bfe'' \bff'' \big) \geq R(W'')$,
  \item \label{item_projection_inequality_long_prim_H}
        $\prim  + \hl + \vl 
  + \min\big(\hr + \mix , \bff'(\bfe'+\bfe'')\big) \geq R(W') + \bfe''$,
  \item \label{item_projection_inequality_long_prim_V}
        $\prim + \hl +  \vl 
  + \min\big(\vr +\mix,\bfe'(\bff'+\bff'')\big) \geq R(W') + \bff''$,
  \item \label{item_projection_inequality_long_bis_H}
        $\bis +   \hr + \vr 
  + \min\big(\hl +\mix,\bff''(\bfe'+\bfe'')\big) \geq R(W'') + \bfe'$,
  \item \label{item_projection_inequality_long_bis_V}
        $\bis + \hr + \vr   
  + \min\big(\vl+ \mix,\bfe''(\bff'+\bff'')\big) \geq R(W'') + \bff'$.
 \end{enumerate}
\end{prop}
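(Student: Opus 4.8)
Throughout I will keep Notations~\ref{notation_projection} and~\ref{notation_Prime_etc}. The plan is to derive each of the six inequalities by pushing the rank-one basis $\ccB$ of $V$ forward under a suitable coordinate projection and counting, type by type, how much of it survives. The only structural input is that a coordinate projection sends rank $\le 1$ matrices to rank $\le 1$ matrices, so in each case it is enough to determine, for each of the seven subspaces of Lemma~\ref{lemma_reduction}, into which block of the target the image lands; the containments listed there (and visualised in Figure~\ref{fig_layout_of_Prim__Mixed}) make this a short check. Exactly as in the proof of Lemma~\ref{lemma_bound_r'_e'_R_w'}, by Proposition~\ref{prop_bound_on_rank_for_non_concise_decompositions_for_vector_spaces}\ref{item_rank_can_be_measured_anywhere}--\ref{item_decompositions_in_larger_spaces} we may assume $W'$ and $W''$ are concise; this affects none of the integers $\prim,\dots,\mix,\bfe',\dots,\bff''$ in the statement.

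For \ref{item_projection_inequality_short_prim} and \ref{item_projection_inequality_short_bis} I would use the ``double'' projections $\pi_{B''}\pi_{C''}\colon B\otimes C\to B'\otimes C'$ and $\pi_{B'}\pi_{C'}\colon B\otimes C\to B''\otimes C''$. Under $\pi_{B''}\pi_{C''}$ every matrix of type $\Bis$, $\HR$, $\VR$ dies, every matrix of type $\Prime$, $\HL$, $\VL$ goes to (at most) a single rank-one matrix, and the matrices of type $\Mix$ land in $E'\otimes F'$, which has dimension $\bfe'\bff'$. Hence $\pi_{B''}\pi_{C''}(V)$ is spanned by at most $\prim+\hl+\vl+\min(\mix,\bfe'\bff')$ rank-one matrices. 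Since $\pi_{B''}\pi_{C''}$ restricts to the identity on $W'$ and annihilates $W''$, and $W'\oplus W''\subseteq V$, the subspace $\pi_{B''}\pi_{C''}(V)$ contains $W'$, so $R(W')\le\prim+\hl+\vl+\min(\mix,\bfe'\bff')$. Inequality \ref{item_projection_inequality_short_bis} is the mirror statement with ${}'$ and ${}''$ exchanged.

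For the four ``long'' inequalities I would use a single projection each: $\pi_{C''}$ for \ref{item_projection_inequality_long_prim_H}, $\pi_{B''}$ for \ref{item_projection_inequality_long_prim_V}, $\pi_{C'}$ for \ref{item_projection_inequality_long_bis_H}, $\pi_{B'}$ for \ref{item_projection_inequality_long_bis_V}. Take \ref{item_projection_inequality_long_prim_H}: applying $\pi_{C''}\colon B\otimes C\to B\otimes C'$ to $\ccB$, the types $\Bis$ and $\VR$ die, the types $\Prime$, $\HL$, $\VL$ give single rank-one matrices, and the types $\HR$ and $\Mix$ land in $(E'\oplus E'')\otimes F'$, of dimension $\bff'(\bfe'+\bfe'')$. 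Keeping a maximal linearly independent subfamily among the latter images, we get $\pi_{C''}(V)=\linspan{s_1,\dots,s_N}$ with each $s_i=b_i\otimes c_i$ of rank $\le1$ and $N\le\prim+\hl+\vl+\min(\hr+\mix,\bff'(\bfe'+\bfe''))$. Since $W'\subset\pi_{C''}(V)$, Proposition~\ref{prop_bound_on_rank_for_non_concise_decompositions_for_vector_spaces}\ref{item_decompositions_in_larger_spaces} gives $N\ge R(W')+\dim\linspan{b_1,\dots,b_N}-\bfb'$, so it remains to check that $\dim\linspan{b_1,\dots,b_N}=\bfb'+\bfe''$. By the case analysis each $b_i$ lies in $B'\oplus E''$; conciseness of $W'$ forces $\linspan{b_1,\dots,b_N}\supseteq B'$ because $W'\subset\linspan{b_1,\dots,b_N}\otimes C'$; and the defining minimality of $E''$ (Notation~\ref{notation_projection}), applied to $\pi_{C''}(V)\subset\linspan{b_1,\dots,b_N}\otimes C'$, then forces $\linspan{b_1,\dots,b_N}\supseteq E''$. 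Combining the two bounds on $N$ yields \ref{item_projection_inequality_long_prim_H}. Inequalities \ref{item_projection_inequality_long_prim_V}, \ref{item_projection_inequality_long_bis_H}, \ref{item_projection_inequality_long_bis_V} follow by the identical scheme with $\pi_{C''}$ replaced by $\pi_{B''}$, $\pi_{C'}$, $\pi_{B'}$, using the minimality of $F''$, $E'$, $F'$ respectively (and tracking the $c$-parts rather than the $b$-parts when the roles of $B$ and $C$ are swapped); in each case the ``$\min$'' collects exactly the two types of rank-one matrices whose image is not automatically a single rank-one matrix in the relevant block, and those images fill the indicated tensor product of $E$'s and $F$'s.

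The only step that is more than bookkeeping is the identity $\dim\linspan{b_1,\dots,b_N}=\bfb'+\bfe''$ and its three analogues: this is exactly where the defining minimality of $E',E'',F',F''$ and the conciseness reduction genuinely enter. The remaining risk is purely combinatorial, namely getting the seven-way case analysis right for each of the six projections, but Figure~\ref{fig_layout_of_Prim__Mixed} essentially reads off all of it.
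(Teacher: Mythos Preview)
Your proof is correct and follows essentially the same approach as the paper: apply $\pi_{B''}\pi_{C''}$ (and its mirror) for \ref{item_projection_inequality_short_prim}--\ref{item_projection_inequality_short_bis}, and the single projections $\pi_{C''}$, $\pi_{B''}$, $\pi_{C'}$, $\pi_{B'}$ for \ref{item_projection_inequality_long_prim_H}--\ref{item_projection_inequality_long_bis_V}, in each case tracking which of the seven types survive and where they land. The only cosmetic difference is that for the long inequalities the paper packages the key step as the inequality $\dim\pi_{C''}(V)\ge R(W')+\bfe''$ already established as Inequality~\eqref{equ_bound_on_pi_C_bis_of_V} in the proof of Lemma~\ref{lemma_bound_r'_e'_R_w'}, whereas you inline that argument by applying Proposition~\ref{prop_bound_on_rank_for_non_concise_decompositions_for_vector_spaces}\ref{item_decompositions_in_larger_spaces} directly to your spanning set $\{s_1,\dots,s_N\}$; the content is identical.
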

\begin{proof}
   To prove Inequality~\ref{item_projection_inequality_short_prim}   
      we consider the composition of projections $\pi_{B''}\pi_{C''}$.
   The linear space $\pi_{B''}\pi_{C''}(V)$ is spanned by rank one matrices 
      $\pi_{B''}\pi_{C''}(\ccB)$ (where $\ccB = \Prime \sqcup \dotsb \sqcup \Mix$ as in Notation~\ref{notation_Prime_etc}), 
      and it contains $W'$.
   Thus $\dim (\pi_{B''}\pi_{C''}(V)) \ge R(W')$. 
   But the only elements of the basis $\ccB$
      that survive both projections (that is, they are not mapped to zero under the composition) 
      are \Prime, \HL, \VL, and \Mix.
   Thus 
   \[
        \prim + \hl + \vl + \mix \geq \dim (\pi_{B''}\pi_{C''}(V)) \ge R(W').
   \]
   On the other hand, $\pi_{B''}\pi_{C''}(\Mix) \subset E'\otimes F'$, 
     thus among $\pi_{B''}\pi_{C''}(\Mix)$ we can choose at most $\bfe'\bff'$ linearly independent matrices.
   Thus 
   \[
        \prim + \hl + \vl + \bfe'\bff' \geq \dim (\pi_{B''}\pi_{C''}(V)) \ge R(W').
   \]
  The two inequalities prove \ref{item_projection_inequality_short_prim}.

  To show Inequality~\ref{item_projection_inequality_long_prim_H}
    we may assume that $W'$ is concise as in the proof of Lemma~\ref{lemma_bound_r'_e'_R_w'}.
  Moreover, as in that same proof (more precisely, Inequality~\eqref{equ_bound_on_pi_C_bis_of_V}) we show that
  $
      \dim\pi_{C''}(V) \ge  R(W') +\bfe''.
  $
  But $\pi_{C''}$ sends all matrices from $\Bis$ and $\VR$ to zero, thus
  \[
     \prim  + \hl +  \vl + \hr + \mix \geq \dim\pi_{C''}(V) \ge R(W') + \bfe''.
  \]
  As in the proof of Part~\ref{item_projection_inequality_short_prim}, we can also replace $\hr + \mix$ by $\bff'(\bfe'+\bfe'')$, 
    since $\pi_{C''}(\HR\cup \Mix) \subset (E' \oplus E'')\otimes F'$, 
    concluding the proof of \ref{item_projection_inequality_long_prim_H}.
    
 The proofs of the remaining four inequalities are identical to one of the above,
    after swapping the roles of $B$ and $C$ or ${}'$ and ${}''$ (or swapping both pairs).
\end{proof}

\begin{prop}\label{prop_SAC_if_E'=0}
 With Notation~\ref{notation_projection}, 
   if one among $E',E'',F',F''$ is zero, then $R(W)=R(W')+R(W'')$.
\end{prop}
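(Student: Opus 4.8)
The plan is to derive the proposition directly from the numerical inequalities of Proposition~\ref{prop_projection_inequalities}, combined with the identity $R(W)=\dim V=\prim+\bis+\hl+\hr+\vl+\vr+\mix$ that comes from the basis $\ccB$ fixed in Notation~\ref{notation_Prime_etc}. First I would reduce to a single case: swapping $B$ and $C$ interchanges the pair $(E',E'')$ with $(F',F'')$, and swapping ${}'$ with ${}''$ interchanges $E'$ with $E''$ and $F'$ with $F''$, so it suffices to assume $E'=\set{0}$.

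The key structural observation is that $E'=\set{0}$ forces $\hl=0$: the set $\HL$ consists, by Notation~\ref{notation_Prime_etc} and Lemma~\ref{lemma_reduction}, of nonzero rank one matrices lying in $E'\otimes(C'\oplus F'')$, and this space is trivial. (In the three symmetric cases one gets instead $\hr=0$, or $\vl=0$, or $\vr=0$, using $\HR\subset E''\otimes(F'\oplus C'')$, $\VL\subset(B'\oplus E'')\otimes F'$, $\VR\subset(E'\oplus B'')\otimes F''$.) Feeding $\hl=0$ and $\bfe'=\dim E'=0$ into Proposition~\ref{prop_projection_inequalities}\ref{item_projection_inequality_short_prim} makes the ``$\min$'' term vanish and yields $\prim+\vl\ge R(W')$, while Proposition~\ref{prop_projection_inequalities}\ref{item_projection_inequality_short_bis} gives the weaker bound $\bis+\hr+\vr+\mix\ge R(W'')$. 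Adding these and using $\hl=0$,
\[
   R(W)=\prim+\bis+\hl+\hr+\vl+\vr+\mix=(\prim+\vl)+(\bis+\hr+\vr+\mix)\ge R(W')+R(W''),
\]
and since the reverse inequality $R(W)\le R(W')+R(W'')$ is immediate (take the union of minimal decompositions of $W'$ and $W''$), we obtain the desired equality.

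I do not expect a genuine obstacle here: the hard work has already been absorbed into Proposition~\ref{prop_projection_inequalities}. The only point needing care is the bookkeeping that matches the correct vanishing ($\hl$, $\hr$, $\vl$ or $\vr$) with the correct one of the six inequalities, so that exactly the ``$\min$'' term that should collapse does collapse; once that is set up the estimate is a one-line computation. If one wanted a self-contained argument avoiding Proposition~\ref{prop_projection_inequalities}, the alternative is the observation that $E'=\set{0}$ forces every rank one matrix of $V$ to have $b'=0$ or $c''=0$, so that $V$ is spanned by rank one matrices lying either in $B\otimes C'$ or in $B''\otimes C$, and one then splits $V$ and projects as in the proof of Lemma~\ref{lem_rank_at_least_2_more_than_dimension}; but the route through Proposition~\ref{prop_projection_inequalities} is shorter.
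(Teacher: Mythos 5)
Your proof is correct and is essentially the paper's argument: reduce to $E'=\set{0}$, apply parts~\ref{item_projection_inequality_short_prim} and~\ref{item_projection_inequality_short_bis} of Proposition~\ref{prop_projection_inequalities}, add them, and compare with $R(W)=\prim+\bis+\hl+\hr+\vl+\vr+\mix$. The one difference is that the paper also argues $\VR=\Mix=\emptyset$ (using the priority rule of Notation~\ref{notation_Prime_etc}: with $E'=\set{0}$ any candidate for $\VR$ lies in $B''\otimes C''$ and is claimed by $\Bis$, any candidate for $\Mix$ lies in $E''\otimes(F'\oplus C'')$ and is claimed by $\HR$) in order to write $R(W)=\prim+\bis+\hr+\vl$, whereas you bypass this by noting that $\bfe'=0$ already kills $\min(\mix,\bfe'\bff')$ in inequality~\ref{item_projection_inequality_short_prim} and that $\vr$ and $\mix$ can simply be left on the $W''$-side of the sum; your bookkeeping is a small but genuine simplification of the published argument.
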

\begin{proof}
  Let us assume without loss of generality that $E'=\{0\}$. 
  Using the definitions of sets $\Prime$, $\Bis$, $\VR$,\dots as in  Notation~\ref{notation_Prime_etc}
    we see that $\HL = \VR= \Mix= \emptyset$, due to the order of choosing the elements of the basis $\ccB$:
  For instance, a potential candidate to became a member of \HL, would be first elected to $\Prime$, 
      and similarly $\VR$ is consumed by $\Bis$ and $\Mix$ by $\HR$.
  Thus:
 \begin{equation*}
   R(W) =   \dim(V_{Seg})
        =   \prim + \bis + \hr + \vl.
 \end{equation*}
 Proposition~\ref{prop_projection_inequalities}\ref{item_projection_inequality_short_prim} and \ref{item_projection_inequality_short_bis} implies
 \[
    R(W') +R(W'') \leq \prim + \vl + \bis + \hr  = R(W), \\    
 \]
 while $R(W') +R(W'') \ge R(W)$ always holds. This shows the desired additivity. 
\end{proof}

\renewcommand{\theenumi}{(\alph{enumi})}

\begin{cor}\label{cor_inequalities_HL__Mix}
  Assume that the additivity fails for $W'$ and $W''$, that is, $d=R(W') + R(W'')-R(W'\oplus W'') >0$.
  Then the following inequalities hold:
 \begin{enumerate}
    \item \label{item_mix_at_least_1}
          $\mix \geq d \ge 1$, 
    \item \label{item_mix_and_horizontal_at_least_1_plus_e}
        $\hl + \hr + \mix \geq \bfe' + \bfe'' + d \ge 3$,
    \item \label{item_mix_and_vertical_at_least_1_plus_f}
        $\vl + \vr + \mix \geq \bff' + \bff'' + d \ge 3$.
 \end{enumerate}
\end{cor}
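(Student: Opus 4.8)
The plan is to derive all three bounds by a purely arithmetic manipulation of the six inequalities of Proposition~\ref{prop_projection_inequalities}, combined with the identity
\[
   R(W)=\dim V=\prim+\bis+\hl+\hr+\vl+\vr+\mix ,
\]
which holds because $\ccB$ is a basis of the minimal decomposition $V$ of $W=W'\oplus W''$ (Notation~\ref{notation_Prime_etc}). The numerical tails (the ``$\ge 1$'' in \ref{item_mix_at_least_1} and the ``$\ge 3$'' in \ref{item_mix_and_horizontal_at_least_1_plus_e} and \ref{item_mix_and_vertical_at_least_1_plus_f}) will come from the standing hypothesis $d\ge 1$ together with Proposition~\ref{prop_SAC_if_E'=0}: since the additivity fails, none of $E',E'',F',F''$ can be zero, so $\bfe',\bfe'',\bff',\bff''\ge 1$. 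I would record this preliminary remark first.

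For \ref{item_mix_at_least_1} I would weaken the two minima in Proposition~\ref{prop_projection_inequalities}\ref{item_projection_inequality_short_prim} and \ref{item_projection_inequality_short_bis} to their first arguments, obtaining $\prim+\hl+\vl+\mix\ge R(W')$ and $\bis+\hr+\vr+\mix\ge R(W'')$. Adding these and using the displayed identity for $R(W)$ to simplify the left-hand side cancels $\prim,\bis,\hl,\hr,\vl,\vr$ and leaves $\mix\ge R(W')+R(W'')-R(W)=d$, which is $\ge 1$ by hypothesis.

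For \ref{item_mix_and_horizontal_at_least_1_plus_e} the same weakening applied to Proposition~\ref{prop_projection_inequalities}\ref{item_projection_inequality_long_prim_H} and \ref{item_projection_inequality_long_bis_H} yields $\prim+\hl+\vl+\hr+\mix\ge R(W')+\bfe''$ and $\bis+\hr+\vr+\hl+\mix\ge R(W'')+\bfe'$. Summing, the left-hand side becomes $R(W)+\hl+\hr+\mix$ by the identity, so $\hl+\hr+\mix\ge d+\bfe'+\bfe''$, which is $\ge 3$ by the preliminary remark. Part \ref{item_mix_and_vertical_at_least_1_plus_f} is obtained identically from Proposition~\ref{prop_projection_inequalities}\ref{item_projection_inequality_long_prim_V} and \ref{item_projection_inequality_long_bis_V}, i.e. by interchanging the roles of $B$ and $C$ throughout.

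I do not expect a genuine obstacle here: the whole corollary reduces to adding two of the already-established inequalities and cancelling common terms via the basis identity. The only point requiring care is the bookkeeping of which of $\prim,\bis,\hl,\hr,\vl,\vr,\mix$ survive each composition of projections, but this bookkeeping has already been carried out inside the proof of Proposition~\ref{prop_projection_inequalities}, so the present argument is a one-line addition in each of the three cases.
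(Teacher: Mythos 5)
Your proposal is correct and coincides with the paper's own argument: the paper likewise weakens the $\min$'s in Proposition~\ref{prop_projection_inequalities} to their first arguments, adds the two relevant inequalities, cancels via $R(W)=\prim+\bis+\hl+\hr+\vl+\vr+\mix$, and invokes Proposition~\ref{prop_SAC_if_E'=0} to get $\bfe',\bfe'',\bff',\bff''\ge 1$ for the numerical tails.
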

 
\renewcommand{\theenumi}{(\roman{enumi})}

\begin{proof} 
   To prove \ref{item_mix_at_least_1} consider the inequalities \ref{item_projection_inequality_short_prim} and \ref{item_projection_inequality_short_bis} from Proposition~\ref{prop_projection_inequalities} and their sum:
   \begin{align}
       \prim + \hl + \vl + \mix &\ge R(W' ),\nonumber \\ 
       \bis  + \hr + \vr + \mix &\ge R(W''),\nonumber\\
       \prim + \bis +\hl + \hr + \vl +\vr + 2 \mix 
                    &\ge R(W') + R(W'') \label{equ_mix_at_least_1_plus_Rank}.
   \end{align}
   The lefthand side of \eqref{equ_mix_at_least_1_plus_Rank} is equal to $R(W)+ \mix$,
   while its righthand side is $R(W)+d$. 
   Thus the desired claim. 
 
   Similarly, using inequalities \ref{item_projection_inequality_long_prim_H} 
       and \ref{item_projection_inequality_long_bis_H} 
       of the same propostion 
       we obtain \ref{item_mix_and_horizontal_at_least_1_plus_e},
   while \ref{item_projection_inequality_long_prim_V}
       and \ref{item_projection_inequality_long_bis_V} imply~\ref{item_mix_and_vertical_at_least_1_plus_f}.
   Note that $\bfe' + \bfe'' + d \ge 3$ and  $\bff' + \bff'' + d \ge 3$
       by Proposition~\ref{prop_SAC_if_E'=0}.
\end{proof}

\subsection{Replete pairs}

As we hunger after inequalities involving integers $\fromto{\prim}{\mix}$ we distinguish a class of pairs $W', W''$ 
   with particularly nice properties.
\begin{defin}
   Consider a pair of linear spaces $W'\subset B'\otimes C'$ and $W''\subset B'' \otimes C''$
     with a fixed minimal decomposition $V= \linspan{V_{\Seg}} \subset B\otimes C$ and $\fromto{\Prime}{\Mix}$
     as in Notation~\ref{notation_Prime_etc}.
   We say $(W', W'')$ is \emph{replete}, if $\Prime \subset W'$ and $\Bis \subset W''$.
\end{defin}

\begin{rem}
   Striclty speaking, the notion of \emph{replete pair} depends also on the minimal decomposition $V$. 
   But as always we consider a pair $W'$ and $W''$ with a fixed decomposition
     $V=\linspan{V_{\Seg}} \supset W'\oplus W''$, so we refrain from mentioning $V$ in the notation.
\end{rem}

The first important observation is that as long as we look for pairs that fail to satisfy the additivity, we are free to replenish any pair. 
More precisely, for any fixed $W'$, $W''$ (and $V$) define the \emph{repletion} of $(W', W'')$  as the pair $(\repletion{W'},\repletion{W''})$:
\begin{equation}
\begin{aligned}
   \repletion{W'}:&=W'+\linspan{\Prime}, & 
   \repletion{W''}:&=W''+\linspan{\Bis}, &  
   \repletion{W}:&=\repletion{W'}\oplus \repletion{W''}.
\end{aligned}
\end{equation}

\begin{prop}\label{prop_does_not_hurt_to_replenish}
   For any $(W', W'')$, with Notation~\ref{notation_Prime_etc},
     we have:
   \begin{align*}
        R(W' ) \le R(\repletion{W' }) & \le R(W' ) + (\dim \repletion{W' } - \dim W' ),\\
        R(W'') \le R(\repletion{W''}) & \le R(W'') + (\dim \repletion{W''} - \dim W''),\\
        R(\repletion{W}) & = R(W).
   \end{align*}
   In particular, if the additivity of the rank fails for $(W',W'')$, then it also fails for 
      $(\repletion{W'},\repletion{W''})$. 
   Moreover,
   \begin{enumerate}
    \item \label{item_repletion_has_the_same_decomposition} 
          $V$ is a minimal decomposition of $\repletion{W}$; 
             in particular, the same distinguished basis 
             $\Prime\sqcup \Bis \sqcup\dotsb\sqcup \Mix$ works for both $W$ and  $\repletion{W}$.
    \item \label{item_repletion_is_replete}
          $(\repletion{W'}, \repletion{W''})$ is a replete pair.
    \item \label{item_gap_is_preserved_under_repletion}
          The gaps $\gap{\repletion{W'}}$, $\gap{\repletion{W''}}$, and  $\gap{\repletion{W}}$, 
               are at most (respectively) 
               $\gap{W'}$, $\gap{W''}$, and  $\gap{W}$.
   \end{enumerate}
\end{prop}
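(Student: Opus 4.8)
The plan is to prove the three displayed chains of (in)equalities first, and then to read off the ``in particular'' statement together with items~\ref{item_repletion_has_the_same_decomposition}--\ref{item_gap_is_preserved_under_repletion} as formal consequences. The single bookkeeping observation that makes everything work is that the subspaces $E', E'', F', F''$ of Notation~\ref{notation_projection}, and therefore also the seven combinatorial types of rank one matrices from Lemma~\ref{lemma_reduction} and the optimisation conditions defining the distinguished basis $\ccB = \Prime \sqcup \dotsb \sqcup \Mix$ in Notation~\ref{notation_Prime_etc}, depend only on the fixed decomposition $V$ and on the splittings $B = B'\oplus B''$, $C = C'\oplus C''$ --- and not on $W$, $W'$ or $W''$. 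Hence the very same basis $\ccB$ that was fixed for $W$ remains an admissible choice of distinguished basis for $\repletion{W}$; this is exactly item~\ref{item_repletion_has_the_same_decomposition}, once we have checked that $V$ is still a minimal decomposition of $\repletion{W}$.

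First I would treat the upper bounds. The subspace $\linspan{\Prime}$ surjects onto the quotient $\repletion{W'}/W'$, which has dimension $k := \dim \repletion{W'} - \dim W'$, so one can choose $s_1, \dots, s_k \in \Prime$ whose classes form a basis of that quotient; then $\repletion{W'} = W' + \linspan{s_1, \dots, s_k}$. Adjoining these $k$ rank one matrices to a minimal decomposition of $W'$ gives $R(\repletion{W'}) \le R(W') + k$, and the identical argument with $\Bis$ gives the bound for $\repletion{W''}$. The lower bounds $R(W') \le R(\repletion{W'})$ and $R(W'') \le R(\repletion{W''})$ are just the monotonicity of rank under inclusion of subspaces.

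Next I would prove $R(\repletion{W}) = R(W)$. One inequality is monotonicity again. For the reverse, note that $W' \oplus W'' \subset V$ by the choice of $V$, while $\linspan{\Prime} \subset V$ and $\linspan{\Bis} \subset V$ because $\Prime$ and $\Bis$ are subsets of the basis $\ccB$ of $V$; hence $\repletion{W} = \repletion{W'} \oplus \repletion{W''} \subset V$. Since $\PP V = \linspan{V_{\Seg}}$, the space $V$ is spanned by rank one matrices, so $R(\repletion{W}) \le \dim V = R(W)$, the last equality holding because $V$ is a minimal decomposition of $W$ (cf.\ Proposition~\ref{prop_can_choose_decomposition_containing_simple_tensors_from_W}). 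This argument simultaneously establishes item~\ref{item_repletion_has_the_same_decomposition}: $V$ contains $\repletion{W}$, satisfies $\PP V = \linspan{V_{\Seg}}$, and has dimension $R(\repletion{W})$, so it is a minimal decomposition of $\repletion{W}$, and by the opening remark the same $\ccB$ is an admissible distinguished basis for it.

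Everything else is then immediate. If the additivity fails for $(W', W'')$, then $R(\repletion{W'}) + R(\repletion{W''}) \ge R(W') + R(W'') > R(W) = R(\repletion{W})$, so it fails for $(\repletion{W'}, \repletion{W''})$. Item~\ref{item_repletion_is_replete} holds because $\repletion{W'} \supset \linspan{\Prime} \supset \Prime$ and $\repletion{W''} \supset \linspan{\Bis} \supset \Bis$, using that $\Prime$ and $\Bis$ are literally the same sets for $W$ and $\repletion{W}$. For item~\ref{item_gap_is_preserved_under_repletion}, the bound $R(\repletion{W'}) \le R(W') + (\dim \repletion{W'} - \dim W')$ rearranges to $\gap{\repletion{W'}} \le \gap{W'}$, similarly for $W''$, and $\gap{\repletion{W}} = R(W) - \dim \repletion{W} \le R(W) - \dim W = \gap{W}$ since $W \subset \repletion{W}$. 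I do not expect any genuine obstacle: the only step requiring care is the bookkeeping of the opening paragraph, namely that $E', \dots, F''$ and the rank one types are intrinsic to $V$, so that replenishing leaves the distinguished basis untouched.
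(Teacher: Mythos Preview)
Your proof is correct and follows essentially the same approach as the paper's own proof: monotonicity gives the lower bounds, adjoining rank one elements of $\Prime$ (resp.\ $\Bis$) gives the upper bounds, and the containment $\repletion{W}\subset V$ yields $R(\repletion{W})\le \dim V = R(W)$ and item~\ref{item_repletion_has_the_same_decomposition}. Your opening bookkeeping remark --- that the subspaces $E',\dots,F''$ and hence the seven rank-one types and the basis $\ccB$ depend only on $V$ and the splittings --- makes explicit something the paper leaves implicit, and is the right justification that the same $\ccB$ serves for $\repletion{W}$.
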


\begin{proof}
    Since $W' \subset \repletion{W'}$, the inequality $R(W')\le R(\repletion{W'})$ is clear.
    Moreover $\repletion{W'}$ is spanned by $W'$ and $(\dim \repletion{W' } - \dim W' )$ 
       additional matrices, that can be chosen out of $\Prime$ --- in particular, these additional matrices are all of rank $1$
       and  $R(\repletion{W'}) \le  R(W') + (\dim \repletion{W' } - \dim W' )$.
    The inequalities about ${}''$ and $R(W)\le R(\repletion{W})$ follow similarly.
    
    Further $\repletion{W} \subset V$, thus $V$ is a decomposition of $\repletion{W}$.
    Therefore also $R(\repletion{W}) \le \dim V = R(W)$, showing $R(\repletion{W})  = R(W)$ 
       and \ref{item_repletion_has_the_same_decomposition}.
    Item~\ref{item_repletion_is_replete} follows from \ref{item_repletion_has_the_same_decomposition},
       while \ref{item_gap_is_preserved_under_repletion} is a rephrasement of the initial inequalities.
\end{proof}

Moreover, if one of the inequalities of Lemma~\ref{lemma_bound_r'_e'_R_w'} is an equality, 
    then the respective $W'$ or $W''$ is not affected by the repletion.
    
\begin{lem}\label{lem_minimal_pair_is_replete}
    If, say, $R(W') + \bfe'' = R(W) - \dim W''$, then $W'' = \repletion{W''}$, 
       and analogous statements hold for the other equalities coming from replacing $\le$ by $=$ in 
       Lemma~\ref{lemma_bound_r'_e'_R_w'}.
\end{lem}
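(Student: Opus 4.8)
The plan is to read the statement off directly from the proof of Lemma~\ref{lemma_bound_r'_e'_R_w'}. Recall that there the inequality $R(W') + \bfe'' \le R(W) - \dim W''$ was obtained as a two-sided estimate on $\dim \pi_{C''}(V)$: one has $\dim \pi_{C''}(V) \ge R(W') + \bfe''$, and, since $W'' \subseteq \ker(\pi_{C''}|_V)$, also $\dim \pi_{C''}(V) \le \dim V - \dim W'' = R(W) - \dim W''$ (using $\dim V = R(W)$, as $V$ is a minimal decomposition of $W = W'\oplus W''$). If the hypothesis $R(W') + \bfe'' = R(W) - \dim W''$ holds, these two estimates collapse to equalities; in particular $\dim \ker(\pi_{C''}|_V) = \dim V - \dim \pi_{C''}(V) = \dim W''$, and since $W''$ is already contained in this kernel, we get $\ker(\pi_{C''}|_V) = W''$.

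Next I would invoke the distinguished basis $\ccB = \Prime \sqcup \Bis \sqcup \dotsb \sqcup \Mix$ of $V$ from Notation~\ref{notation_Prime_etc}. By construction $\Bis \subset B'' \otimes C''$, and $C''$ is precisely the kernel of the projection $\pi_{C''}\colon C \to C'$; hence $\pi_{C''}$ annihilates every matrix of $\Bis$, so $\Bis \subseteq \ker(\pi_{C''}|_V) = W''$. Therefore $\linspan{\Bis} \subseteq W''$, and consequently $\repletion{W''} = W'' + \linspan{\Bis} = W''$, which is the claim.

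The remaining three equalities are handled identically, replacing $\pi_{C''}$ by the projection used in the corresponding part of Lemma~\ref{lemma_bound_r'_e'_R_w'}: equality in $R(W') + \bff'' = R(W) - \dim W''$ gives $\ker(\pi_{B''}|_V) = W''$, and since $\pi_{B''}$ (kernel $B''$) kills $\Bis \subset B'' \otimes C''$, we again obtain $W'' = \repletion{W''}$; symmetrically, equality in $R(W'') + \bfe' = R(W) - \dim W'$ or in $R(W'') + \bff' = R(W) - \dim W'$ yields $\ker(\pi_{C'}|_V) = W'$ or $\ker(\pi_{B'}|_V) = W'$ respectively, and as both $\pi_{C'}$ and $\pi_{B'}$ kill $\Prime \subset B' \otimes C'$, we conclude $\Prime \subseteq W'$, i.e. $W' = \repletion{W'}$. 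I do not expect a genuine obstacle here — the argument is essentially a rank–nullity count — and the only point requiring care is matching each of the four equalities to the projection whose kernel contains the relevant block ($\Bis$ for the ${}''$-equalities, $\Prime$ for the ${}'$-equalities).
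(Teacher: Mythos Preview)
Your argument is correct, but it takes a genuinely different route from the paper's proof. The paper does not reopen the proof of Lemma~\ref{lemma_bound_r'_e'_R_w'}; instead it applies that lemma as a black box to the repletion $\repletion{W} = \repletion{W'}\oplus\repletion{W''}$ (which has the same minimal decomposition $V$ and hence the same $\bfe''$, by Proposition~\ref{prop_does_not_hurt_to_replenish}\ref{item_repletion_has_the_same_decomposition}), obtaining the chain
\[
  R(\repletion{W}) - \bfe'' \ge R(\repletion{W'}) + \dim(\repletion{W''}) \ge R(W') + \dim W'' = R(W) - \bfe'' = R(\repletion{W}) - \bfe'',
\]
and then reads off $\dim(\repletion{W''}) = \dim W''$. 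Your approach is more hands-on: you extract from the equality case that $\ker(\pi_{C''}|_V) = W''$ via rank--nullity, and then observe $\Bis \subset B''\otimes C'' \subset \ker \pi_{C''}$. This has the minor advantage of not relying on Proposition~\ref{prop_does_not_hurt_to_replenish} at all, at the cost of depending on the internal structure of the earlier proof rather than just its statement. Both arguments are short and either would serve.
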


\begin{proof}
    By Lemma~\ref{lemma_bound_r'_e'_R_w'} applied to $\repletion{W} = \repletion{W'}\oplus \repletion{W''}$ 
      and by Proposition~\ref{prop_does_not_hurt_to_replenish}
    we have:
    \begin{align*}
        R(\repletion{W}) - \bfe'' & \stackrel{\text{\ref{lemma_bound_r'_e'_R_w'}}}{\ge} R(\repletion{W'}) + \dim (\repletion{W''}) \\
        & \stackrel{\text{\ref{prop_does_not_hurt_to_replenish}}}{\ge} R(W') + \dim W'' \\ 
        &\stackrel{\text{assumptions of \ref{lem_minimal_pair_is_replete}}}{=}\ \ R(W) - \bfe'' 
        \stackrel{\text{\ref{prop_does_not_hurt_to_replenish}}}{=} R(\repletion{W}) - \bfe''.
    \end{align*}
    Therefore all inequalities are in fact equalities. In particular,  $\dim (\repletion{W''}) = \dim W''$.
    The claim of the lemma follows from $W'' \subset \repletion{W''}$.
\end{proof}

%
%
%
%
%

\subsection{Digestion}

For replete pairs it makes sense to consider the complement of $\linspan{\Prime}$ in $W'$, 
   and of $\linspan{\Bis}$ in $W''$.

\begin{defin} With Notation~\ref{notation_Prime_etc}, 
    suppose $S'$ and $S''$ denote the following linear spaces:
    \begin{align*}
      S':&=\linspan{\Bis \sqcup \HL \sqcup \HR \sqcup \VL \sqcup \VR \sqcup \Mix} 
         \cap W' && \text{(we omit $\Prime$ in the union) and }\\
      S'':&=\linspan{\Prime\sqcup \HL\sqcup \HR\sqcup \VL\sqcup \VR\sqcup \Mix} \cap W'' && \text{(we omit $\Bis$ in the union).}
    \end{align*}
  We call the pair $(S', S'')$ the \emph{digested version} of $(W',W'')$.  
\end{defin}

\begin{lem}\label{lemma_dim_S'}
 If $(W', W'')$ is replete,
   then $W'  = \linspan{\Prime} \oplus S'$ and $W''  = \linspan{\Bis} \oplus S''$.
\end{lem}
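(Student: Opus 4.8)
The plan is to expand elements of $W'$ in the distinguished basis $\ccB$ and use repleteness to control where the $\Prime$-component lands.

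First I would record the two structural facts needed. Since $V$ is a minimal decomposition of $W'\oplus W''$ we have $W=W'\oplus W''\subset V$, so in particular $W'\subset V$ and $W''\subset V$. Next, $\ccB=\Prime\sqcup\Bis\sqcup\HL\sqcup\HR\sqcup\VL\sqcup\VR\sqcup\Mix$ is a basis of $V$, so as a vector space
\[
   V=\linspan{\Prime}\oplus U',\qquad U':=\linspan{\Bis\sqcup\HL\sqcup\HR\sqcup\VL\sqcup\VR\sqcup\Mix},
\]
and by definition $S'=U'\cap W'$. Thus it suffices to show $W'=\linspan{\Prime}\oplus(U'\cap W')$, which is a general statement about a subspace $W'$ of $V$ containing $\linspan{\Prime}$.

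The key step is the inclusion $W'\subset\linspan{\Prime}+S'$: take $w'\in W'\subset V$ and write $w'=p+s$ with $p\in\linspan{\Prime}$, $s\in U'$ using the direct sum $V=\linspan{\Prime}\oplus U'$. Because the pair is replete, $\linspan{\Prime}\subset W'$, hence $p\in W'$, and therefore $s=w'-p\in W'$, so $s\in U'\cap W'=S'$. The reverse inclusion $\linspan{\Prime}+S'\subset W'$ is immediate: $\linspan{\Prime}\subset W'$ by repleteness and $S'\subset W'$ by definition. Directness of the sum is free, since $\linspan{\Prime}\cap S'\subset\linspan{\Prime}\cap U'=\set{0}$. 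This gives $W'=\linspan{\Prime}\oplus S'$, and the statement for $W''$ follows verbatim with $\Bis$ playing the role of $\Prime$ and $U''=\linspan{\Prime\sqcup\HL\sqcup\HR\sqcup\VL\sqcup\VR\sqcup\Mix}$ the role of $U'$.

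There is essentially no obstacle here; the only points requiring care are (i) noting that $W'$ and $W''$ genuinely sit inside $V$, so that the basis $\ccB$ can legitimately be used to decompose their elements, and (ii) observing that repleteness is exactly the hypothesis that forces the $\Prime$-component of any $w'\in W'$ to lie back in $W'$ — without it the argument breaks.
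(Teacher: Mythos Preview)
Your proof is correct and follows essentially the same approach as the paper's: both use repleteness to place $\linspan{\Prime}$ inside $W'$ and the linear independence of $\ccB$ to separate $\linspan{\Prime}$ from $S'$. The only cosmetic difference is that the paper concludes via a codimension count ($\codim(S'\subset W')\le\prim$) rather than your explicit element-wise decomposition $w'=p+s$.
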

\begin{proof}
Both $\linspan{\Prime}$ and $S'$ are contained in $W'$. 
The intersection $\linspan{\Prime} \cap S'$ is zero, since 
   the seven sets $\Prime, \Bis, \HR, \HL, \VL, \VR, \Mix$ are disjoint and together they are linearly independent.
Furthermore,  
\[
   \codim (S' \subset W') \le \codim (\linspan{\Bis \sqcup \HR \sqcup \HL \sqcup \VL \sqcup \VR \sqcup \Mix} \subset V) =  \prim.
\]
Thus $\dim S' + \prim \ge \dim W'$, which concludes the proof of the first claim. The second claim is analogous.
\end{proof}

These complements $(S', S'')$ might replace the original replete pair $(W',W'')$:
  as we will show, if the additivity of the rank fails for $(W',W'')$, it also fails for $(S', S'')$.
Moreover, $(S', S'')$ is still replete, but it does not involve any $\Prime$ or $\Bis$.

\begin{lem}\label{lemma_can_digest}
Suppose $(W', W'')$ is replete, define $S'$ and $S''$ as above and set $S=S'\oplus S''$.
Then
\begin{enumerate}
 \item \label{item_can_digest___rank}
       $R(S)=R(W) -\prim-\bis = \hl+\hr+\vl+\vr+\mix$
         and the space $\linspan{\HL,\HR,\VL,\VR,\Mix}$ determines a minimal decomposition of $S$.
       In particular, $(S',S'')$ is replete and both spaces $S'$ and $S''$ contain no rank one matrices.
 \item \label{item_can_digest___additivity}
       If the additivity of the rank $R(S) =  R(S')+ R(S'')$ holds for $S$,
         then it also holds for $W$, that is, $R(W) =  R(W')+ R(W'')$.
\end{enumerate}
\end{lem}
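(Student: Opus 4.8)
The plan is to analyse how the distinguished basis $\Prime\sqcup\Bis\sqcup\HL\sqcup\HR\sqcup\VL\sqcup\VR\sqcup\Mix$ of $V$ restricts to a decomposition of $S=S'\oplus S''$, and then transfer additivity statements back and forth between $S$ and $W$. For part~\ref{item_can_digest___rank}, the key observation is that $S \subset \linspan{\HL,\HR,\VL,\VR,\Mix}=:V_S$, since $S' \subset \linspan{\Bis\sqcup\HL\sqcup\dotsb\sqcup\Mix}\cap W'$ and $S'$ meets $\linspan{\Bis}$ trivially (the seven sets being linearly independent), so in fact $S'\subset V_S$, and symmetrically $S''\subset V_S$. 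Because $V_S$ is spanned by the rank one matrices $\HL\sqcup\HR\sqcup\VL\sqcup\VR\sqcup\Mix$, we get $R(S)\le \dim V_S = \hl+\hr+\vl+\vr+\mix = R(W)-\prim-\bis$. For the reverse inequality I would argue that any decomposition of $S$ of smaller length, combined with $\Prime\sqcup\Bis$, would give a decomposition of $\repletion{W}=W$ (using $W'=\linspan{\Prime}\oplus S'$, $W''=\linspan{\Bis}\oplus S''$ from Lemma~\ref{lemma_dim_S'}) of length less than $R(W)$, a contradiction; one must check the span condition $\PP V = \linspan{V_{\Seg}}$ is inherited, which holds since adding rank one matrices preserves it. That $V_S$ is a minimal decomposition of $S$ then follows, and the fact that $S'$, $S''$ contain no rank one matrices is exactly what one gets from the maximality clauses in Notation~\ref{notation_Prime_etc}: any rank one matrix in $S'$ would have been absorbed into $\Prime$ during the greedy construction of $\ccB$. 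Repleteness of $(S',S'')$ is then vacuous/immediate since there is nothing from $\Prime$ or $\Bis$ left.

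For part~\ref{item_can_digest___additivity}, I would assume $R(S)=R(S')+R(S'')$ and compute directly:
\[
  R(W') + R(W'') = \big(\prim + R(S')\big) + \big(\bis + R(S'')\big) = \prim + \bis + R(S),
\]
where the first equality uses $R(W')=\prim+R(S')$ and $R(W'')=\bis+R(S'')$. The latter identities need justification: $R(W')\le \prim+R(S')$ is clear from $W'=\linspan{\Prime}\oplus S'$ and $\Prime$ consisting of rank one matrices, while $R(W')\ge \prim+R(S')$ should follow from Proposition~\ref{prop_bound_on_rank_for_non_concise_decompositions_for_vector_spaces}\ref{item_decompositions_in_larger_spaces} or a direct projection/quotient argument removing the $\prim$ independent rank one contributions — this is the step I expect to be the main obstacle, as one must be careful that removing $\linspan{\Prime}$ from a minimal decomposition of $W'$ really costs exactly $\prim$ and not fewer. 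Once this is in hand, combine with $\prim + \bis + R(S) = R(W)$ from part~\ref{item_can_digest___rank} to conclude $R(W')+R(W'')=R(W)$, which is the desired additivity (the inequality $R(W)\le R(W')+R(W'')$ being automatic).

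The main subtlety throughout is bookkeeping with the greedy basis $\ccB$: one must repeatedly invoke that $\Prime$ and $\Bis$ were chosen maximally and then $\HL,\HR,\VL,\VR$ maximally, to ensure that $S'$ and $S''$ genuinely contain no rank one matrices and that the dimension counts via Lemma~\ref{lemma_dim_S'} are tight. I would state the rank identity $R(W')=\prim+R(S')$ as an explicit sub-claim and prove it by exhibiting $\Prime$ as part of a minimal decomposition of $W'$ (legitimate since the $\prim$ elements of $\Prime$ lie in $W'$ and are rank one), reducing to showing $R(W'/\!\!\linspan{\Prime})$-type bounds, i.e. that a minimal decomposition of $W'$ can be taken to contain $\Prime$ — which is where Proposition~\ref{prop_can_choose_decomposition_containing_simple_tensors_from_W}\ref{item_decompositions_in_larger_spaces}, or its proof technique, does the work.
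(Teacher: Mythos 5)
Your argument for part~\ref{item_can_digest___rank} is essentially the paper's: the inclusion $S\subset\linspan{\HL,\HR,\VL,\VR,\Mix}$ gives the upper bound $R(S)\le\hl+\hr+\vl+\vr+\mix$, and the lower bound comes from $W=S\oplus\linspan{\Prime,\Bis}$ (hence $R(W)\le R(S)+\prim+\bis$), phrased by you as a contradiction. The observations about $S'$, $S''$ containing no rank one matrices and the pair being replete are also as in the paper.

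For part~\ref{item_can_digest___additivity} there is a genuine gap, and you have correctly located it yourself. Your plan hinges on the equality $R(W')=\prim+R(S')$, whose non-trivial half is $R(W')\ge\prim+R(S')$. Neither Proposition~\ref{prop_bound_on_rank_for_non_concise_decompositions_for_vector_spaces}\ref{item_decompositions_in_larger_spaces} nor Proposition~\ref{prop_can_choose_decomposition_containing_simple_tensors_from_W} gives this: the first concerns decompositions sticking out of a smaller ambient factor, and the second/substitution method (Proposition~\ref{proposition_for_AFT_method_slice_A}\ref{item_AFT_any_choice_drops_rank_at_most_1}) only yields, after iterating over the $\prim$ rank-one elements, $R(S')\ge R(W')-\prim$, i.e.\ precisely the \emph{easy} inequality $R(W')\le\prim+R(S')$ again. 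So your proposed bridge does not close.

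The fix is to run the inequality chain in the opposite direction and never attempt the hard bound. You only need the easy inequalities $R(W')\le R(S')+\prim$ and $R(W'')\le R(S'')+\bis$ (combine a decomposition of $S'$ with the rank-one set $\Prime$, etc.). Together with part~\ref{item_can_digest___rank} and the hypothesis $R(S)=R(S')+R(S'')$ this gives
\[
R(W)=R(S)+\prim+\bis=R(S')+\prim+R(S'')+\bis\ge R(W')+R(W''),
\]
and since $R(W)\le R(W')+R(W'')$ always holds, you conclude equality. This is the paper's route; it sidesteps the unproven (and possibly false) claim $R(W')\ge\prim+R(S')$ entirely.
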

\begin{proof}
     Since $W = S\oplus \linspan{\Prime, \Bis}$, we must have $R(W)\le R(S)+ \prim +\bis$.
     On the other hand, $S\subset \linspan{\HL,\HR,\VL,\VR,\Mix}$, hence $R(S)\le \hl+\hr+\vl+\vr+\mix$.
     These two claims show the equality for $R(S)$ in \ref{item_can_digest___rank} 
        and that $\linspan{\HL,\HR,\VL,\VR,\Mix}$ gives a minimal decomposition of $S$.
     Since there is no tensor of type $\Prime$ or $\Bis$ in this minimal decomposition, 
        it follows that the pair $(S',S'')$ is replete by definition.
     If, say, $S'$ contained a rank one matrix, then by our choice of basis in 
        Notation~\ref{notation_Prime_etc} it would be in the span of $\Prime$, a contradiction.

     Finally, if $R(S) =  R(S')+ R(S'')$, then:
     \begin{align*}
        R(W) &= R(S)+ \prim + \bis \\
             &= R(S')+ \prim + R(S'')+\bis \ge R(W') + R(W''),
     \end{align*}
     showing the statement  \ref{item_can_digest___additivity} for $W$.
\end{proof}

As a summary, in our search for examples of failure of the additivity of the rank,
   in the previous section we replaced a linear space $W=W'\oplus W''$ by its
repletion $\repletion{W} = \repletion{W'} \oplus \repletion{W''}$, that is possibly larger.
Here in turn, we replace  $\repletion{W}$ by a smaller linear space $S = S' \oplus S''$.
In fact, $\dim W'\ge \dim S'$ and $\dim W''\ge \dim S''$, and also $R(S)\le R(W)$ and $R(S')\le R(W')$ etc.
That is, changing $W$ into $S$ makes the corresponding tensors possibly ``smaller'',
   but not larger.
In addition, we gain more properties: $S$ is replete and has no $\Prime$'s or $\Bis$'s in its minimal decomposition.

\begin{cor}\label{cor_small_dimensions_of_E_and_F}
   Suppose that $W =  W'\oplus W''$ is as in Notation~\ref{notation_2} 
      and that $\bfe''$ and $\bff''$ are as in Notation~\ref{notation_projection}. 
   If either:
   \begin{enumerate}
    \item\label{item_cor_small_dims_of_E_and_F_arbitrary_field}
    $\kk$ is an arbitrary field, $\bfe''\le 1$ and $\bff''\le 1$, or
    \item\label{item_cor_small_dims_of_E_and_F_alg_closed_field}
    $\kk$ is algebraically closed, $\bfe''\le 1$ and $\bff''\le 2$,
   \end{enumerate}
 then the additivity of the rank $R(W) =  R(W')+ R(W'')$ holds.
\end{cor}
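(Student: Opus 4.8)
The plan is to assemble this corollary from the two reductions of this section (repletion and digestion) together with the hook-shaped additivity results of Propositions~\ref{prop_1_1_hook_shaped} and~\ref{prop_1_2_hook_shaped}. First I would reduce to the case of a replete pair: passing from $(W',W'')$ to its repletion $(\repletion{W'},\repletion{W''})$ does not alter the fixed minimal decomposition $V$ by Proposition~\ref{prop_does_not_hurt_to_replenish}\ref{item_repletion_has_the_same_decomposition}, hence it does not change the subspaces $E''$, $F''$ nor the integers $\bfe''$, $\bff''$ appearing in the hypotheses; and by Proposition~\ref{prop_does_not_hurt_to_replenish}, additivity for the repletion implies additivity for the original pair. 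So I may assume $(W',W'')$ is replete and consider its digested version $(S',S'')$, with $S' \subset B'\otimes C'$ and $S'' \subset B''\otimes C''$.

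The heart of the argument is the claim that $S''$ is $(\bfe'',\bff'')$-hook shaped, with witnessing subspaces $E'' \subset B''$ and $F'' \subset C''$. Every element of $S''$ is, by definition, a linear combination of the rank one matrices in $\Prime \sqcup \HL \sqcup \HR \sqcup \VL \sqcup \VR \sqcup \Mix$ (the set $\Bis$ being excluded from the span defining $S''$), and since $S'' \subset B''\otimes C''$ such an element equals its own image under the projection $\pi_{B'}\pi_{C'} \colon B\otimes C \to B''\otimes C''$. Applying $\pi_{B'}\pi_{C'}$ to the seven types of Lemma~\ref{lemma_reduction}: $\Prime$, $\HL$ and $\VL$ are killed entirely (their $B''\otimes C''$ component is zero); $\HR$ and $\Mix$ land in $E''\otimes C''$; and $\VR$ lands in $B''\otimes F''$. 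Therefore $S'' \subset E''\otimes C'' + B''\otimes F''$, which is exactly the $(\bfe'',\bff'')$-hook shaped condition.

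Now I would feed this into the hook-shaped theorems applied to the pair $(S',S'')$, with $S''$ in the role of the hook-shaped factor and $S'$ in the role of the arbitrary factor. Under hypothesis~\ref{item_cor_small_dims_of_E_and_F_arbitrary_field}, $S''$ is $(1,1)$-hook shaped, so Proposition~\ref{prop_1_1_hook_shaped} gives $R(S'\oplus S'') = R(S') + R(S'')$ over an arbitrary field; under hypothesis~\ref{item_cor_small_dims_of_E_and_F_alg_closed_field}, $S''$ is $(1,2)$-hook shaped, so Proposition~\ref{prop_1_2_hook_shaped} gives the same over an algebraically closed field. (If $\bfe''$ or $\bff''$ vanishes one may instead invoke Proposition~\ref{prop_SAC_if_E'=0} directly, or simply note that a $(0,f)$-hook is a $(1,f)$-hook.) Since $(\repletion{W'},\repletion{W''})$ is replete, Lemma~\ref{lemma_can_digest}\ref{item_can_digest___additivity} then propagates additivity from $S$ to $\repletion{W}$, and the repletion step of the first paragraph propagates it further to $W$, yielding $R(W) = R(W') + R(W'')$.

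Since all the structural machinery is already available, I expect no deep difficulty here; the main thing requiring care is the projection bookkeeping of the second paragraph — verifying that digestion produces a hook-shaped $S''$ whose hook widths are precisely $\bfe''$ and $\bff''$ — together with the observation that repletion leaves $E''$ and $F''$ untouched, so that the numerical hypotheses of the corollary genuinely transfer to the pair $(S',S'')$ to which the hook-shaped additivity results are applied.
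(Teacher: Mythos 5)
Your proof is correct and follows essentially the same route as the paper's: replete, digest, observe that the digested $S''$ lands in $E''\otimes C'' + B''\otimes F''$ (so it is $(\bfe'',\bff'')$-hook shaped), and then invoke Propositions~\ref{prop_1_1_hook_shaped} or~\ref{prop_1_2_hook_shaped} before propagating additivity back through Lemma~\ref{lemma_can_digest} and Proposition~\ref{prop_does_not_hurt_to_replenish}. The paper compresses the projection bookkeeping into a single sentence (``since $\Bis=\emptyset$ we must have $W''\subset E''\otimes C'' + B''\otimes F''$''), which your second paragraph simply unpacks; the two arguments are the same.
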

\begin{proof}
   By Proposition~\ref{prop_does_not_hurt_to_replenish} and Lemma~\ref{lemma_can_digest}, 
      we can assume $W$ is replete and equal to its digested version.
   But then (since $\Bis =\emptyset$) we must have $W''\subset E''\otimes C'' + B''\otimes F''$.
   In particular, $W''$ is, respectively, a $(1,1)$-hook shaped space or a $(1,2)$-hook shaped space.
   Then the claim follows from Proposition~\ref{prop_1_1_hook_shaped} or Proposition~\ref{prop_1_2_hook_shaped}. 
\end{proof}

\subsection{Additivity of the tensor rank for small tensors}\label{sect_proofs_of_main_results_on_rank}

We conclude our discussion of the additivity of the tensor rank with the following summarising results.

\begin{thm}\label{thm_additivity_rank_plus_2}
   Over an arbitrary base field $\kk$ 
      assume $p'\in A'\otimes B'\otimes C'$ is any tensor, while  
      $p''\in A''\otimes B''\otimes C''$ is concise and $R(p'')\le \bfa''+2$.
   Then the additivity of the rank holds:
   \[
     R(p'\oplus p'') = R(p')+R(p'').
   \]
  The analogous statements with the roles of $A$ replaced by $B$ or $C$, or the roles of ${}'$ and ${}''$ 
      swapped, hold as well.
\end{thm}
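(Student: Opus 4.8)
The plan is to translate the statement into the language of slices and then combine conciseness with the bounds on $\bfe''$ and $\bff''$ that have already been established. Following Notation~\ref{notation_2}, I would pass to the slices $W' = p'((A')^*) \subset B'\otimes C'$, $W'' = p''((A'')^*) \subset B''\otimes C''$ and $W = W'\oplus W''$; by Lemma~\ref{lem_rank_of_space_equal_to_rank_of_tensor} it then suffices to prove $R(W) = R(W') + R(W'')$. The only place conciseness enters is the following observation: since $p''$ is $A''$-concise, the map $p''\colon (A'')^* \to B''\otimes C''$ is injective, so $\dim W'' = \bfa''$ and hence the ``gap'' of $W''$ is small, namely $R(W'') - \dim W'' = R(p'') - \bfa'' \le 2$.

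With this in hand I would argue by contradiction. Suppose $R(W) < R(W') + R(W'')$. Then Corollary~\ref{cor_bounds_on_es_and_fs} applies and yields $\bfe'' < R(W'') - \dim W'' \le 2$ as well as $\bff'' < R(W'') - \dim W'' \le 2$, so that $\bfe'' \le 1$ and $\bff'' \le 1$ simultaneously. But this is exactly the hypothesis of Corollary~\ref{cor_small_dimensions_of_E_and_F}\ref{item_cor_small_dims_of_E_and_F_arbitrary_field}, which is valid over an arbitrary field, and it forces $R(W) = R(W') + R(W'')$ --- contradicting our assumption. Hence the additivity holds. The versions with $A$ replaced by $B$ or $C$, or with ${}'$ and ${}''$ interchanged, require no extra work, since Notation~\ref{notation_projection} and all of Corollaries~\ref{cor_bounds_on_es_and_fs} and \ref{cor_small_dimensions_of_E_and_F} are symmetric in the three tensor factors and in the two summands.

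I do not expect a genuine obstacle in this argument: all the substantive work is already packaged in the hook-shaped-space results Propositions~\ref{prop_1_1_hook_shaped} and \ref{prop_1_2_hook_shaped} (through Corollary~\ref{cor_small_dimensions_of_E_and_F}) and in the projection estimates of Lemma~\ref{lemma_bound_r'_e'_R_w'}/Corollary~\ref{cor_bounds_on_es_and_fs}. The two points that need care are purely bookkeeping: first, that conciseness of $p''$ is used precisely to turn the bound $R(p'') \le \bfa'' + 2$ into the statement ``the gap of $W''$ is at most $2$'' (without conciseness one could only say $R(W'') - \dim W'' \le R(p'')$, which is useless); and second, that over an arbitrary field one must land in the ``$\bfe'' \le 1$ and $\bff'' \le 1$'' regime of Corollary~\ref{cor_small_dimensions_of_E_and_F}\ref{item_cor_small_dims_of_E_and_F_arbitrary_field}, not the ``$\bfe''\le1$, $\bff''\le2$'' regime, which is exactly what the numerics $R(p'') - \bfa'' \le 2$ deliver.
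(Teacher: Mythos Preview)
Your argument is correct and is essentially identical to the paper's proof: the paper also uses conciseness to get $\dim W'' = \bfa''$, then combines Corollary~\ref{cor_bounds_on_es_and_fs} with Corollary~\ref{cor_small_dimensions_of_E_and_F}\ref{item_cor_small_dims_of_E_and_F_arbitrary_field}, only phrased as a dichotomy (either $\bfe'',\bff''\le 1$ or one of them is $\ge 2$) rather than as a contradiction.
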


\begin{proof} Since $p''$ is concise, the corresponding vector subspace $W''=p''((A'')^*)$ 
      has dimension equal to $\bfa''$.
   By Corollary~\ref{cor_small_dimensions_of_E_and_F}\ref{item_cor_small_dims_of_E_and_F_arbitrary_field} 
      we may assume $\bfe''\ge  2$ or  $\bff''\ge 2$.
   Say, $\bfe'' \ge 2 \ge R(p'') - \dim W''$,
      then by Corollary~\ref{cor_bounds_on_es_and_fs} the additivity must hold.
\end{proof}

\begin{thm}\label{thm_additivity_rank_in_dimensions_a_3_3}
   Suppose the base field is $\kk =\CC$ or $\kk=\RR$ 
      (complex or real numbers) and
      assume $p'\in A'\otimes B'\otimes C'$ is any tensor, while  
      $p''\in A''\otimes \kk^3\otimes \kk^3$ for an arbitrary vector space $A''$.
   Then the additivity of the rank holds:
$R(p'\oplus p'') = R(p')+R(p'')$.
\end{thm}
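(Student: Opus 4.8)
The plan is to suppose additivity fails and shrink $p''$, via the repletion/digestion machinery, down to a tensor in $\kk^a\otimes\kk^3\otimes\kk^3$ with $a$ small whose rank is forced to be so large that it violates a known maximal--rank estimate. First I would pass to slice spaces by Lemma~\ref{lem_rank_of_space_equal_to_rank_of_tensor}, reducing to the statement $R(W)=R(W')+R(W'')$ for $W=W'\oplus W''$ with $W''\subset B''\otimes C''$ and $\bfb''=\bfc''=3$. Assume $d:=R(W')+R(W'')-R(W)>0$. By Proposition~\ref{prop_does_not_hurt_to_replenish} and Lemma~\ref{lemma_can_digest} we may replace $(W',W'')$ by a replete pair equal to its digested version; then $\Prime=\Bis=\emptyset$, neither $W'$ nor $W''$ contains a rank one matrix, and (since $\Bis=\emptyset$) $W''\subset E''\otimes C''+B''\otimes F''$, with $E''\subset B''=\kk^3$ and $F''\subset C''=\kk^3$. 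By Proposition~\ref{prop_SAC_if_E'=0}, $\bfe',\bfe'',\bff',\bff''\ge 1$, and by Corollary~\ref{cor_bounds_on_es_and_fs}, $\max(\bfe'',\bff'')<R(W'')-\dim W''$, i.e.\ $R(W'')\ge\dim W''+\max(\bfe'',\bff'')+1$.

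Next I would feed this lower bound on $R(W'')$ against two cheap upper bounds. The hook space $E''\otimes C''+B''\otimes F''$ is linearly spanned by rank one matrices, so $R(W'')\le\dim(E''\otimes C''+B''\otimes F'')=3\bfe''+3\bff''-\bfe''\bff''$; and since $W''$ contains no rank one matrix it is a proper subspace of $\kk^3\otimes\kk^3$, hence lies in a hyperplane, which is itself spanned by its rank one matrices, giving $R(W'')\le 8$ and therefore $\dim W''\le 5$. Corollary~\ref{cor_small_dimensions_of_E_and_F} lets us (after possibly exchanging $B''$ and $C''$) assume $\bfe''\ge 2$; since also $\bfe'',\bff''\le 3$ this leaves only finitely many triples $(\bfe'',\bff'',\dim W'')$. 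For each one, the inequality $R(W'')\ge\dim W''+\max(\bfe'',\bff'')+1$ would have to be compatible with the maximal rank of tensors in $\kk^{\dim W''}\otimes\kk^3\otimes\kk^3$: plugging in the bound $5$ for $\dim W''\le 3$ from \cite{bremner_hu_Kruskal_theorem} (valid over $\CC$ and $\RR$) and the bounds for $\dim W''\in\{4,5\}$ over $\CC$ and $\RR$ from \cite{sumi_miyazaki_sakata_maximal_tensor_rank}, one eliminates $\bfe''=3$ outright and all configurations with $\dim W''$ in the upper part of its range.

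What survives — and this I expect to be the main obstacle — are the genuinely extremal configurations that the crude counting cannot kill, the most resistant being $\bfe''=\bff''=2$ with $W''$ a two dimensional rank--one--free space of $3\times 3$ matrices of tensor rank exactly $5$ (a maximal--rank matrix pencil, in its Kronecker normal form), together with, over $\kk=\RR$, a few additional low--dimensional hook shapes such as $(\bfe'',\bff'')=(2,1)$ that slip through because only part~\ref{item_cor_small_dims_of_E_and_F_arbitrary_field} of Corollary~\ref{cor_small_dimensions_of_E_and_F} is available there. I would dispatch these by the finer inequalities of Proposition~\ref{prop_projection_inequalities}\ref{item_projection_inequality_long_prim_H}--\ref{item_projection_inequality_long_bis_V} together with Corollary~\ref{cor_inequalities_HL__Mix}, analysing directly how the $B''$-- and $C''$--components of the matrices of types $\Mix$, $\HR$, $\VL$ must span $E''$ and $F''$, and using the explicit normal form of the pencil, to show that no rank one decomposition of $W$ of length $R(W')+R(W'')-d$ can exist; the real case adds nothing new to this analysis beyond substituting the real maximal--rank estimates, which remain small enough. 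The crux throughout is that in these borderline cases there is no clean hook reduction to fall back on, so the argument must combine external maximal--rank inputs with a hands--on study of the seven types of rank one matrices.
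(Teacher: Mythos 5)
Your overall strategy recombines the same ingredients the paper uses, but takes a longer path and, as you yourself acknowledge, does not close. The paper's actual proof is a three-line reduction: invoke JaJa--Takche to assume $p''$ is concise in $A''\otimes\kk^3\otimes\kk^3$ with $\bfa''\ge 3$ (if any factor of $p''$ has dimension at most $2$, additivity is already known), cite \cite{sumi_miyazaki_sakata_maximal_tensor_rank} to conclude $R(p'')\le\bfa''+2$, and finish by Theorem~\ref{thm_additivity_rank_plus_2} --- which already packages exactly the combination of Corollary~\ref{cor_bounds_on_es_and_fs} and Corollary~\ref{cor_small_dimensions_of_E_and_F} that you are unpacking by hand via repletion and digestion.

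The genuine gap in your proposal is the surviving case $\dim W''=2$, $\bfe''=\bff''=2$, $R(W'')=5$ (and over $\RR$ also $(\bfe'',\bff'')=(2,1)$). You are right that the counting bounds alone do not kill it: a rank-one-free pencil of $3\times 3$ matrices of rank $5$ really exists, and the inequality $R(W'')\ge\dim W''+\max(\bfe'',\bff'')+1$ is exactly tight there. But your proposed fix --- a ``hands-on'' study of the seven types of rank one matrices aided by Proposition~\ref{prop_projection_inequalities}, Corollary~\ref{cor_inequalities_HL__Mix}, and the Kronecker normal form of the pencil --- is never carried out, and it is not at all clear it would terminate; as written it is a promissory note, not a proof. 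The decisive observation you are missing is much simpler and already in your toolbox: after digestion the new $W''$ has slice dimension $2$, so the corresponding tensor lives in $\kk^2\otimes\kk^3\otimes\kk^3$, and JaJa--Takche \cite{jaja_takche_Strassen_conjecture} (valid over any field, in particular $\CC$ and $\RR$) gives additivity whenever one factor has dimension at most $2$ --- contradicting your standing assumption $d>0$. This instantly disposes of $\dim W''\in\{1,2\}$ and makes the whole ``extremal'' analysis unnecessary. Equivalently --- and this is what the paper does --- apply JaJa--Takche \emph{before} any counting, so that $\bfa''\ge 3$ from the start; then $R(p'')\le\bfa''+2$ contradicts $R(W'')\ge\dim W''+3$ with no case split at all.
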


\begin{proof} 
By the classical Ja'Ja'-Takche Theorem~\cite{jaja_takche_Strassen_conjecture}
       (in the algebraically closed case also shown in Proposition~\ref{prop_1_2_hook_shaped}), 
      we can assume $p''$ is concise in $A''\otimes \kk^3\otimes \kk^3$.
   But then by \cite[Thm~5 and Thm~6]{sumi_miyazaki_sakata_maximal_tensor_rank} 
      the rank of $p''$ is at most $\bfa'' +2$
      and the result follows from Theorem~\ref{thm_additivity_rank_plus_2}.
\end{proof}

Note that in the proof above we exploit the results 
   about maximal rank in $\kk^{\bfa''} \otimes \kk^3\otimes \kk^3$.
In \cite{sumi_miyazaki_sakata_maximal_tensor_rank} the authors assume that the base field is $\CC$ o r$\RR$.
We are not aware of any similar results over other fields, 
  with the unique exception of $\bfa''=3$, see the next proof for a discussion.

\begin{thm}\label{thm_additivity_rank_6}
   Suppose the base field $\kk$ is such that:
     \begin{itemize}
      \item the maximal rank of a tensor in $\kk^3\otimes \kk^3 \otimes \kk^3$ is at most $5$.
     \end{itemize}
     (For example $\kk$ is algebraically closed of characteristic $\ne 2$ or $\kk= \RR$).
   Furthermore assume $R(p'') \le 6$. 
   Then independently of $p'$, the additivity of the rank holds: $R(p'\oplus p'')= R(p')+R(p'')$.
\end{thm}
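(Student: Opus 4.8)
The plan is to reduce to the case where $p''$ is concise, and then to run a short dimension count on the factors $A''\otimes B''\otimes C''$ in which $p''$ lives, feeding the outcome into results already available: Theorem~\ref{thm_additivity_rank_plus_2}, the theorem of Ja'Ja' and Takche \cite{jaja_takche_Strassen_conjecture} (additivity whenever one of the six factors has dimension at most two, valid over an arbitrary field), and the standing hypothesis on the maximal rank in $\kk^3\otimes\kk^3\otimes\kk^3$.

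First I would reduce to the case that $p''$ is concise. If $p''$ is not, say, $A''$-concise, choose $\tilde A''\subsetneq A''$ minimal with $p''\in\tilde A''\otimes B''\otimes C''$ and replace $A=A'\oplus A''$ by $A'\oplus\tilde A''$. This leaves $p'\oplus p''$ unchanged as a tensor; by Lemma~\ref{lem_bound_on_rank_for_non_concise_decompositions} and Proposition~\ref{prop_bound_on_rank_for_non_concise_decompositions_for_vector_spaces}, both characteristic free, it changes none of $R(p')$, $R(p'')$, $R(p'\oplus p'')$; it is compatible with the direct-sum structure; and it does not destroy conciseness in the other two factors. Iterating over the three factors, I may assume $p''$ is concise. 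Then, since a concise tensor has rank at least the dimension of each of its factors, $\bfa''=\dim A''\le R(p'')\le 6$, and likewise $\bfb'',\bfc''\le 6$.

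Now the case analysis. If $\min(\bfa'',\bfb'',\bfc'')\le 2$, one of the six spaces $A',A'',B',B'',C',C''$ has dimension at most two, and additivity holds by \cite{jaja_takche_Strassen_conjecture}. Suppose then $\bfa'',\bfb'',\bfc''\ge 3$. If $\max(\bfa'',\bfb'',\bfc'')\ge R(p'')-2$, then by the symmetry of the statement we may assume $\bfa''=\max(\bfa'',\bfb'',\bfc'')$, so $p''$ is concise with $R(p'')\le\bfa''+2$ and additivity holds by Theorem~\ref{thm_additivity_rank_plus_2}. The only remaining possibility is $3\le\min(\bfa'',\bfb'',\bfc'')\le\max(\bfa'',\bfb'',\bfc'')\le R(p'')-3\le 3$, which forces $\bfa''=\bfb''=\bfc''=3$ and $R(p'')=6$; but then $p''$ would be a concise tensor of rank $6$ in $\kk^3\otimes\kk^3\otimes\kk^3$, contradicting the hypothesis that the maximal rank there is at most $5$. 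Hence that case is vacuous, and additivity holds in all cases.

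I expect no genuine obstacle here: the argument is bookkeeping on the list of dimensions, with all the substance outsourced to Theorem~\ref{thm_additivity_rank_plus_2}, to \cite{jaja_takche_Strassen_conjecture}, and to the assumed bound on the maximal rank of $3\times 3\times 3$ tensors (which is the only place where $\cchar\kk\ne 2$, or $\kk=\RR$, is used, via the references cited just after the statement). The single step that deserves a little care is the conciseness reduction, namely verifying that shrinking $A''$, $B''$, $C''$ preserves the three ranks in play and is compatible with the direct-sum decomposition; over an arbitrary field this is exactly what Lemma~\ref{lem_bound_on_rank_for_non_concise_decompositions} and Proposition~\ref{prop_bound_on_rank_for_non_concise_decompositions_for_vector_spaces} are designed to provide.
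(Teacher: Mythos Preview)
Your proof is correct and follows essentially the same approach as the paper: reduce to $p''$ concise, dispose of the case $\min(\bfa'',\bfb'',\bfc'')\le 2$ via \cite{jaja_takche_Strassen_conjecture}, handle $\max(\bfa'',\bfb'',\bfc'')\ge 4$ via Theorem~\ref{thm_additivity_rank_plus_2}, and use the hypothesis on $\kk^3\otimes\kk^3\otimes\kk^3$ for the residual $3\times 3\times 3$ case. The only cosmetic difference is that the paper phrases the last step as ``$R(p'')\le 5 = \bfa''+2$, so Theorem~\ref{thm_additivity_rank_plus_2} applies'' rather than as a contradiction, and it is terser about the conciseness reduction (your explicit invocation of Lemma~\ref{lem_bound_on_rank_for_non_concise_decompositions} and Proposition~\ref{prop_bound_on_rank_for_non_concise_decompositions_for_vector_spaces} is a welcome clarification).
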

\begin{proof}
   Without loss of generality, we may assume $p''$ is concise in $A''\otimes B''\otimes C''$. 
   As in the previous proof, if any of the dimensions $\dim A''$, $\dim B''$, $\dim C''$ is at most $2$, 
      then the claim follows from \cite{jaja_takche_Strassen_conjecture}.
   On the other hand, if any of the dimensions  $\bfa''$, $\bfb''$, $\bfc''$ is at least $4$, 
      then the result follows from Theorem~\ref{thm_additivity_rank_plus_2}.
   The remaining case $\bfa''=\bfb''=\bfc''=3$ also follows from Theorem~\ref{thm_additivity_rank_plus_2} by our assumption on the field $\kk$.
   
   The assumption is satisfied for $\kk=\RR, \CC$ 
      see \cite[Thm~5.1]{bremner_hu_Kruskal_theorem} 
      or \cite[Thm~5]{sumi_miyazaki_sakata_maximal_tensor_rank}.
   In \cite[top of p.~402]{bremner_hu_Kruskal_theorem} the authors say
      that their proof is also valid for any algebraically closed field
      of characteristic not equal to $2$.
  They also provide the interesting history of this question
     and, furthermore, they show that the assumption about maximal rank in 
     $\kk^3\times \kk^3\times \kk^3$ fails for $\kk=\ZZ_2$.
\end{proof}

Assuming the base field is $\kk=\CC$, 
   one of the smallest cases not covered by the above theorems would be
   the case of $p', p''\in \CC^4\otimes \CC^4\otimes \CC^3$.
The generic rank (that is, the rank of a general tensor) in $\CC^4\otimes \CC^4\otimes \CC^3$
   is $6$, moreover \cite[p.~6]{atkinson_stephens_maximal_rank_of_tensors} 
   claims the maximal rank is $7$ (see also \cite[Prop.~2]{sumi_miyazaki_sakata_maximal_tensor_rank}).
\begin{example}
   Suppose $A'=A''=\CC^4$ and either $B'=B''= \CC^4$ and $C'=C''= \CC^3$
      or $B'=C''= \CC^4$ and $B'=C''= \CC^3$.
   Suppose both $p'\in A'\otimes B'\otimes C'$
      and $p''\in A''\otimes B''\otimes C''$ are tensors of rank $7$ 
      and that the additivity of the rank fails for $p= p'\oplus p''$.
   Let $W'$ and $W''$ be as in Notation~\ref{notation_2}, 
      and $E'$, $\bfe'$, etc.~be as in Notation~\ref{notation_projection}.
   Then:
   \begin{itemize}
    \item $R(p) =13$,
    \item $\bfe'=\bfe'' = \bff' =\bff''=2$,
    \item with $\Prime$, $\hl$, etc., as in Notation~\ref{notation_Prime_etc}, 
          we have $\Prime =\Bis = \emptyset$, and the following inequalities hold:
          \[
           \begin{array}{|lcccl|}
\hline           
           \multicolumn{5}{|c|}{\text{if } \bfb''= 4, \bfc''=3}\\
\hline
\hline
              2 & \le & \hl  & \le &3\\
\hline
              2 & \le & \hr  & \le &3\\
\hline
              3 & \le & \vl  & \le &4\\
\hline
              3 & \le & \vr  & \le &4\\
\hline
              1 & \le & \mix & \le &3\\
\hline
                \multicolumn{3}{|r}{\hl+\vl}  & \le &6\\
\hline
                \multicolumn{3}{|r}{\hr+\vr} & \le &6\\
\hline
           \end{array}
\quad \text{ or } \quad 
           \begin{array}{|lcccl|}
\hline           
           \multicolumn{5}{|c|}{\text{if } \bfb''= 3, \bfc''=4}\\
\hline
\hline
              2 & \le & \hl  & \le &3\\
\hline
              3 & \le & \hr  & \le &4\\
\hline
              3 & \le & \vl  & \le &4\\
\hline
              2 & \le & \vr  & \le &3\\
\hline
              1 & \le & \mix & \le &3\\
\hline
                \multicolumn{3}{|r}{\hl+\vl}  & \le &6\\
\hline
                \multicolumn{3}{|r}{\hr+\vr} & \le &6.\\
\hline
           \end{array}
           \]
   \end{itemize}
\end{example}
\begin{proof}[Sketch of proof]
    For brevity we only argue in the case  $\bfb''= 4, \bfc''=3$, 
       while the proof of $\bfb''= 3, \bfc''=4$ is very similar.
    Both tensors $p', p''\in \CC^4 \otimes \CC^4\otimes \CC^3$ must be concise, 
       as otherwise either Theorem~\ref{thm_additivity_rank_in_dimensions_a_3_3} 
       or JaJa-Takche Theorem imply the additivity of the rank.
    By Corollary~\ref{cor_bounds_on_es_and_fs} we must have $\bfe' \le 2$, 
       and similarly for $\bff'$, $\bfe''$, $\bff''$.
    If one of them is strictly less then $2$, 
       then Corollary~\ref{cor_small_dimensions_of_E_and_F}\ref{item_cor_small_dims_of_E_and_F_alg_closed_field} implies the additivity, a contradiction,
       thus  $\bfe'=\bfe'' = \bff' =\bff''=2$.

    By the failure of the additivity, we must have $R(W)\le 13$, but Lemma~\ref{lemma_bound_r'_e'_R_w'} 
       implies also $R(W)\ge 13$, showing that $R(p)=13$.
       
    If, say $\Prime \ne \emptyset$, then the digested version $(S', S'')$ of repletion of $(W', W'')$ 
       is also a counterexample to the additivity by 
       Lemma~\ref{lemma_can_digest}\ref{item_can_digest___additivity}.
    If $S=S'\oplus S''$ has lower rank than $W$, then either  $S$ is not concise, 
       contradicting Theorem~\ref{thm_additivity_rank_in_dimensions_a_3_3}
       or $S$ contradicts the above calculations of rank.
    Thus also $R(S)=13$ and by Lemma~\ref{lemma_can_digest}\ref{item_can_digest___rank}
       we must have $\prim=\bis =0$. In fact, $S=W$.

    Let $\widetilde{E'}\subset E'$ be the smallest linear subspace such that 
        $\pi_{C''}(\HL)\subset  \widetilde{E'}\otimes C'$. Set ${\bf{\widetilde{e}'}}=\dim \widetilde{E'}$.
    Since $\Prime=\emptyset$, we must have 
    \[
        W'\subset \linspan{\pi_{C''}(\HL), \pi_{B''}(\VL), \pi_{B''}\pi_{C''}(\Mix)} 
          \subset \widetilde{E'} \otimes C' + B'\otimes F'.
    \]
    That is, $W'$ is $({\bf{\widetilde{e}'}}, \bff')$-hook shaped.
    Since $\bff'=2$, Proposition~\ref{prop_1_2_hook_shaped} 
       shows that $\hl \ge {\bf{\widetilde{e}'}}\ge 2$.
    Similarly, $\hr$, $\vl$, $\vr$ are also at least $2$.
    We also see that $\widetilde{E'} = E'$, that is, the elements of type $\HL$ are concise in $E'$.
    
    Next, we show that $\vl \ne 2$, which is perhaps the most interesting part of this example.
    For this purpose we consider the projection $\pi_{E'\oplus B''}\colon B \to B'/E'$.
    The related map $B\otimes C \to   (B'/E')\otimes C$ 
       (which by the standard abuse we also denote $\pi_{E'\oplus B''}$), 
       kills all the rank one tensors of types $\HL$, $\HR$, $\VR$ and $\Mix$, 
       leaving only those of type $\VL$ alive.
    The image $\pi_{E'\oplus B''}(W) \subset  (B'/E')\otimes F'$ has rank at most $\vl$ 
       and is concise (otherwise, either Proposition~\ref{prop_1_2_hook_shaped} shows the additivity or $p'$ is not concise, a contradiction in both cases).
    Note that $(B'/E')\otimes F' \simeq \CC^2 \otimes \CC^2$ and there are only two (up to a change of basis) 
       concise linear subspaces of $\CC^2 \otimes \CC^2$ which have rank at most $2$.
    In both cases it is straightforward to verify that there exists $\beta'\in (B'/E')^* \subset (B')^*$ 
       such that $\beta'(p) = \beta'(p') \in A' \otimes C'$ has rank $1$.
    Then, by swapping the roles of $A$ and $B$, the process of repletion and digestion 
       (Lemma~\ref{lemma_can_digest}) leads to a smaller tensor which is also a counterexample to the additivity of the rank, again a contradiction.
    Thus $R(\pi_{E'\oplus B''}(W))$ must be at least $3$ and consequently, $\vl \ge 3$. 
    The same argument shows that $\vr \ge 3$.
    
    Combining the inequalities obtained so far we also get:
    \[
      \mix = 13 - (\hl + \hr + \vl + \vr) \le 3.
    \]
    The inequality $\mix \ge 1$ follows from 
       Corollary~\ref{cor_inequalities_HL__Mix}\ref{item_mix_at_least_1},
       and it is left to show only the last two inequalities. 
    To prove  $\hl+\vl \le 6$,
       we use Proposition~\ref{prop_projection_inequalities}\ref{item_projection_inequality_short_bis}:
    \[
      7\le \hr +\vr +\mix = R(W) - (\hl+\vl) = 13-(\hl+\vl).
    \]
    The last inequality follows from a similar argument.
    \end{proof}

\section{Additivity of the tensor border rank}\label{sect_border_rank}

Throughout this section we will follow Notations~\ref{notation} and~\ref{notation_2}. Moreover, we restrict to the base field $\kk=\CC$.

We turn our attention to the additivity of the border rank.
That is, we ask for which tensors $p'\in A'\otimes B'\otimes C'$ and  $p''\in A''\otimes B''\otimes C''$
  the following equality holds:
  \[
     \br(p'\oplus p'') = \br(p')  + \br(p'').
  \]
Since the known counterexamples to the additivity are much smaller than in the case of the additivity 
   of the tensor rank, our methods are more restricted to very small cases.
We commence with the following elementary observation.

\begin{lem}\label{lemma_additivity_small_brank}
Consider concise tensors $p'\in A'\ts B'\ts C'$ and $p''\in A''\ts B''\ts C''$ with  $\br(p')\le \bfa'$ and $\br(p'')\le \bfa''$ (thus in fact $\br(p') = \bfa'$ and $\br(p'')= \bfa''$).
Let $p=p' \oplus p''$.
Then the additivity of the border rank holds $\br(p) =  \br(p')+\br(p'')$. 
\end{lem}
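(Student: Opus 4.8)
The plan is to establish the two inequalities $\br(p)\le \bfa'+\bfa''$ and $\br(p)\ge \bfa'+\bfa''$, and to check that under the hypotheses the first right-hand side equals $\br(p')+\br(p'')$.

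For the upper bound I would first record the elementary inequality $\br(p'\oplus p'')\le \br(p')+\br(p'')$, which holds for all tensors: if $p'_t\to p'$ and $p''_t\to p''$ with $R(p'_t)\le\br(p')$ and $R(p''_t)\le\br(p'')$, then $p'_t\oplus p''_t\to p'\oplus p''$ with $R(p'_t\oplus p''_t)\le\br(p')+\br(p'')$, so $p'\oplus p''$ lies in the relevant secant variety. Next I would note that conciseness forces $\br(p')\ge\bfa'$ and $\br(p'')\ge\bfa''$: for the $A'$-concise tensor $p'$ the associated space $W'=p'((A')^*)$ has $\dim W'=\bfa'$, and $\br(p')=\br(W')\ge\dim W'$ by Lemma~\ref{lem_rank_of_space_equal_to_rank_of_tensor} together with the observation that $\sigma_{r,k}(\Seg)=\emptyset$ whenever $r<k$ (because $R(W)\ge\dim W$ for every $k$-dimensional $W$). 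Combined with the hypotheses $\br(p')\le\bfa'$ and $\br(p'')\le\bfa''$, this gives $\br(p')=\bfa'$, $\br(p'')=\bfa''$, hence $\br(p)\le\bfa'+\bfa''$.

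For the matching lower bound I would show that $p=p'\oplus p''$ is $A$-concise. Under the splitting $A^*=(A')^*\oplus(A'')^*$, the linear map $p\colon A^*\to B\otimes C$ restricts to $p'$ on $(A')^*$ (with image in $B'\otimes C'$) and to $p''$ on $(A'')^*$ (with image in $B''\otimes C''$); both restrictions are injective by $A'$- respectively $A''$-conciseness, and the two images meet only in $0$, so $p$ is injective, i.e.\ $A$-concise. Therefore $W:=p(A^*)=W'\oplus W''$ has $\dim W=\bfa'+\bfa''$, and by Lemma~\ref{lem_rank_of_space_equal_to_rank_of_tensor} and $\br(W)\ge\dim W$ we obtain $\br(p)=\br(W)\ge\bfa'+\bfa''$. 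Combining the two bounds yields $\br(p)=\bfa'+\bfa''=\br(p')+\br(p'')$.

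I do not expect a genuinely hard step here; the only point worth stating carefully is the inequality $\br(W)\ge\dim W$ for linear subspaces $W$ (equivalently, that border rank dominates each multilinear rank), which I would either derive as above from the emptiness of $\sigma_{r,k}(\Seg)$ for $r<k$ or simply quote as a standard fact.
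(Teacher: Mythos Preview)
Your proof is correct and follows essentially the same approach as the paper: both establish the lower bound via the injectivity of $p\colon A^*\to B\otimes C$ (i.e.\ $A$-conciseness of $p$) to get $\br(p)\ge\dim p(A^*)=\bfa'+\bfa''$, and invoke the standard subadditivity $\br(p'\oplus p'')\le\br(p')+\br(p'')$ for the upper bound. Your write-up is simply more detailed in justifying $\br(W)\ge\dim W$ and the parenthetical equalities $\br(p')=\bfa'$, $\br(p'')=\bfa''$.
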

\begin{proof}
Since $p'$ and $p''$ are concise, the linear maps 
   $p'\colon (A')^* \to B'\otimes C'$ and $p''\colon (A'')^* \to B''\otimes C''$ are injective.
Then also the map $p\colon A^\ast\to B\ts C$ is injective and 
    \[
      \br(p) \ge \dim p(A^*) = \dim p'((A')^*) + \dim p''((A'')^*) = \br(p') + \br(p'').
    \]
The opposite inequality always holds.
\end{proof}

\begin{cor}\label{cor_additivity_brank_very_small_a_b_c}
Suppose both triples of integers $(\bfa',\bfb',\bfc')$ and $(\bfa'',\bfb'',\bfc'')$ 
   fall into one of the following cases:
 $(a,b,1)$, $(a,1,c)$,
 $(a,b,2)$ with $a\ge b \ge 2$,
 $(a,2,c)$ with $a\ge c \ge 2$,
 $(a,b,c)$ with $a\ge b c$.
Then for any concise tensors $p'\in A'\otimes B'\otimes C'$ and $p''\in A''\otimes B''\otimes C''$ 
   the additivity of the border rank holds.
\end{cor}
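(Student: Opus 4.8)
The plan is to reduce everything to Lemma~\ref{lemma_additivity_small_brank}. I will check that for each of the five admissible shapes of a triple, \emph{every} tensor $p\in A\otimes B\otimes C$ of the corresponding dimensions satisfies $\br(p)\le\bfa$; equivalently, that $\sigma_{\bfa}(\Seg(\PP A\times\PP B\times\PP C))=\PP(A\otimes B\otimes C)$. Applying this to both triples, the concise tensors $p'$ and $p''$ then satisfy $\br(p')\le\bfa'$ and $\br(p'')\le\bfa''$, so Lemma~\ref{lemma_additivity_small_brank} applies verbatim, with the first factor in the role of $A$ for each summand, and gives $\br(p'\oplus p'')=\br(p')+\br(p'')$. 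Note that conciseness enters only through the hypotheses of Lemma~\ref{lemma_additivity_small_brank}: the bound $\br(p)\le\bfa$ to be established holds for all tensors of the stated dimensions.

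Three of the shapes are elementary. If $\bfc=1$ (resp.\ $\bfb=1$), then $A\otimes B\otimes C$ is canonically the matrix space $\ccM^{\bfa\times\bfb}$ (resp.\ $\ccM^{\bfa\times\bfc}$) and $\sigma_r$ is the variety of matrices of rank at most $r$; since every matrix has rank at most $\min(\bfa,\bfb)\le\bfa$, we get $\sigma_{\bfa}=\PP(A\otimes B\otimes C)$. If $\bfa\ge\bfb\bfc$, then fixing bases $(b_i)$ of $B$ and $(c_j)$ of $C$ and writing $p=\sum_{i,j}v_{ij}\otimes b_i\otimes c_j$ presents $p$ as a sum of $\bfb\bfc$ simple tensors, so $R(p)\le\bfb\bfc\le\bfa$ and hence $\br(p)\le\bfa$. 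For the two remaining shapes, say $\bfc=2$ with $\bfa\ge\bfb\ge2$ (the shape $\bfb=2$, $\bfa\ge\bfc\ge2$ being identical after interchanging $B$ and $C$), the tensors of $A\otimes B\otimes C$ are pencils of $\bfa\times\bfb$ matrices, and I would invoke their classical analysis via the Kronecker canonical form: the maximal border rank of a tensor in $\CC^m\otimes\CC^n\otimes\CC^2$ equals $\min(\max(m,n),2\min(m,n))$, so $\sigma_{\max(m,n)}$ already fills the ambient space. In our case this value is $\min(\bfa,2\bfb)\le\bfa$, whence $\sigma_{\bfa}=\PP(A\otimes B\otimes C)$ once more. (If $\bfa\ge2\bfb$, even this is elementary, since then $R(p)\le\dim(B\otimes C)=2\bfb\le\bfa$.)

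The single genuinely non-elementary input --- and hence the main obstacle --- is the bound $\br(p)\le\bfa$ for tensors in $\CC^{\bfa}\otimes\CC^{\bfb}\otimes\CC^2$ in the balanced range $\bfb\le\bfa<2\bfb$. I would quote this from the theory of matrix pencils (Kronecker) or from the classification of defective Segre varieties rather than reproving it; the remainder of the argument is bookkeeping with conciseness and with Lemma~\ref{lemma_additivity_small_brank}.
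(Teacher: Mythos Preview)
Your proposal is correct and follows essentially the same route as the paper: both reduce to Lemma~\ref{lemma_additivity_small_brank} by verifying $\br(p)\le\bfa$ in each listed shape, with the only nontrivial input being the pencil case $(a,b,2)$. The paper handles that case by embedding $\CC^{\bfa}\otimes\CC^{\bfb}\otimes\CC^2$ into the square space $\CC^{\bfa}\otimes\CC^{\bfa}\otimes\CC^2$ (via Lemma~\ref{lem_rank_independent_of_ambient}) and quoting that the generic border rank there equals $\bfa$, whereas you quote the Kronecker pencil classification directly; these are equivalent ways of invoking the same classical fact.
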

Note that the list of triples in the corollary is a bit exaggerated, as some of these triples have no concise tensors. 
However, this phrasing is convenient for further applications and search for unsolved pairs of triples.
\begin{proof}
 After removing the triples that do not admit any concise tensor the list reduces to:
 $(a,a,1)$, $(a,1,a)$,
 $(a,b,2)$ (for $2 \le b \le a \le 2b$), 
 $(a,2,c)$ (for $2 \le c \le a \le 2c$),
 $(bc,b,c)$.
We claim that in all these cases $\br(p') = \bfa'$ and $\br(p'') = \bfa''$. In fact:
\begin{itemize}
  \item The claim is clear for  $(a,1,a)$, $(a,a,1)$, and $(bc,b,c)$.
  \item For $(a,a,2)$ and $(a,2,a)$ the claim follows from the classification of such tensors, 
          see the argument in the first paragraph of \cite[\S5.3]{jabu_han_mella_teitler_high_rank_loci}.
  \item For $(a,b,2)$ (with $2 \le b < a \le 2b$),  and $(a,2,c)$ (with $2 \le c < a \le 2c$),
           the claim follows from the previous case: any such concise tensor $T$ has border rank at least $a$.
        But $T$ is  at the same time a (non-concise) tensor in a larger tensor space 
           $\CC^a \otimes \CC^a \otimes \CC^2$ or $\CC^a \otimes \CC^2 \otimes \CC^a$.
        Thus by Lemma~\ref{lem_rank_independent_of_ambient} the border rank of $T$ 
           is at most the generic (border) rank 
           in this larger space, which is equal to $a$ by the previous item.
\end{itemize}
Therefore we conclude using Lemma~\ref{lemma_additivity_small_brank}.
\end{proof}

Theorem~\ref{thm_additivity_border_rank_in_dimension_4} claims that the additivity of the border rank holds for $\bfa,\bfb,\bfc\le 4$.
Most of the cases follow from Corollary~\ref{cor_additivity_brank_very_small_a_b_c}, 
   with the exception of $(3+1,2+2,2+2)$ and $(3+1,3+1,3+1)$, 
   which are covered in Sections~\ref{sec_br_3_2_2_1_b_c} and~\ref{sec_br_3_3_3_1_b_c}.

\begin{defin}
   Assume $p, q \in A\otimes B\otimes C$ are two tensors. 
   We say that $p$ is \emph{more degenerate} than $q$ if $p \in \overline{GL(A)\times GL(B) \times GL(C)\cdot q}$.
\end{defin}

\begin{example}\label{example_222_degenerates_to_122}
   Any concise tensor in $\CC^1\otimes \CC^2\otimes \CC^2$ is more degenerate
      than any concise tensor in $\CC^2\otimes \CC^2\otimes \CC^2$.
\end{example}

\begin{example}\label{example_degeneration_in_322}
   Consider concise tensors in $\CC^3\times \CC^2\times \CC^2$.
   According to \cite[Table 10.3.1]{landsberg_tensorbook}, 
      there are two orbits of  the action of $GL_3\times GL_2\times GL_2$ of such tensors, 
      both orbits of border rank $3$. 
   One orbit is ``generic'', the other is \emph{more degenerate}.
   The latter is represented by:
   \[
      p = a_1 \otimes b_1 \otimes c_1 
        + a_2 \otimes b_1 \otimes c_2 
        + a_3 \otimes b_2 \otimes c_1.
   \]
\end{example}

\begin{lem}\label{lemma_more_degenerate}
   Suppose $p'\in A'\otimes B'\otimes C'$ is an arbitrary tensor 
     and $p'', q'' \in A''\otimes B''\otimes C''$ are  
     such that $\br (p'')=\br(q'')$ and $p''$ is more degenerate than $q''$.
   If the additivity of the border rank holds for $p'\oplus p''$ 
     then it also holds for $p'\oplus q''$. 
\end{lem}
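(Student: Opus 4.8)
The plan is to exploit two standard features of the border rank over $\CC$: it is invariant under the action of $GL(A)\times GL(B)\times GL(C)$, and it is lower semicontinuous, since each sublevel set $\sigma_r(\Seg)$ is closed. Together these say that the border rank can only drop along a degeneration. Granting this, the lemma reduces to a two-line chain of inequalities combining subadditivity with the hypotheses.

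First I would promote the degeneration of $p''$ to $q''$ to a degeneration of the direct sums. Given $g=(\alpha,\beta,\gamma)\in GL(A'')\times GL(B'')\times GL(C'')$, set $\tilde g=(\id_{A'}\oplus\alpha,\ \id_{B'}\oplus\beta,\ \id_{C'}\oplus\gamma)\in GL(A)\times GL(B)\times GL(C)$. Since $p'\in A'\otimes B'\otimes C'$ is fixed by $\id$ while $g$ acts on $q''\in A''\otimes B''\otimes C''$, we get $\tilde g\cdot(p'\oplus q'')=p'\oplus(g\cdot q'')$. Hence the affine-linear (in particular continuous) map $t\mapsto p'\oplus t$ carries the orbit $GL(A'')\times GL(B'')\times GL(C'')\cdot q''$ into the orbit $GL(A)\times GL(B)\times GL(C)\cdot(p'\oplus q'')$, and therefore carries the closure of the former into the closure of the latter. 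As $p''$ lies in the orbit closure of $q''$, it follows that $p'\oplus p''$ is more degenerate than $p'\oplus q''$.

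Next I would invoke semicontinuity. Put $r=\br(p'\oplus q'')$. The set $\sigma_r(\Seg)\subset\PP(A\otimes B\otimes C)$ is closed and, by $GL(A)\times GL(B)\times GL(C)$-invariance of the border rank, contains the entire orbit of $p'\oplus q''$; being closed, it contains the orbit closure, and hence contains $p'\oplus p''$. Thus $\br(p'\oplus p'')\le r=\br(p'\oplus q'')$. Combining this with subadditivity of the border rank, with the hypothesis $\br(q'')=\br(p'')$, and with the assumed additivity $\br(p'\oplus p'')=\br(p')+\br(p'')$, I obtain
\[
  \br(p'\oplus q'')\ \le\ \br(p')+\br(q'')\ =\ \br(p')+\br(p'')\ =\ \br(p'\oplus p'')\ \le\ \br(p'\oplus q''),
\]
so all four quantities coincide; in particular $\br(p'\oplus q'')=\br(p')+\br(q'')$, which is the additivity of the border rank for $p'\oplus q''$.

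There is no real obstacle here; the only points that need care are that the degeneration is used in the correct direction (the more-degenerate tensor has the \emph{smaller} border rank, because it lies in the closed set $\sigma_r$, not the larger one), and that the embedding $g\mapsto\tilde g$ genuinely places $p'\oplus(g\cdot q'')$ in a single $GL(A)\times GL(B)\times GL(C)$-orbit. Both are routine over $\CC$, which is the standing assumption of this section, and the semicontinuity statement uses only that $\sigma_r(\Seg)$ is closed, which is recorded in the discussion following Definition~\ref{def_br_point}.
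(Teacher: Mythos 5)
Your proof is correct and follows essentially the same route as the paper's: degenerate $p'\oplus p''$ from $p'\oplus q''$, invoke semicontinuity of the border rank to get $\br(p'\oplus p'')\le\br(p'\oplus q'')$, and close the loop with subadditivity and the two hypotheses. You simply spell out the details that the paper leaves implicit, namely the lifting of the $GL(A'')\times GL(B'')\times GL(C'')$ degeneration to a $GL(A)\times GL(B)\times GL(C)$ degeneration of the direct sum and the closedness of $\sigma_r(\Seg)$.
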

\begin{proof}
   Since $p''$ is more degenerate than $q''$ also $p'\oplus p''$ is more degenerate than $p'\oplus q''$.
   Thus 
   \[
      \br(p'\oplus q'') \ge  \br(p'\oplus p'') = \br(p') + \br(p'')= \br(p') + \br(q'').
   \]
\end{proof}

\subsection{Strassen's equations of secant varieties}\label{sec_Strassen_equations}

Often as a criterion to determine whether a tensor is or is not of a given border rank, we exploit
defining equations of the corresponding secant varieties. 
We review here one type of equations that is most important for the small cases we consider in this article.

First assume $\bfb=\bfc$ and consider the space of square matrices $B\ts C$.
Let $f_\bfb:(B\ts C)^{\times3}\to B\ts C$ be the map of matrices defined as follows:
\begin{equation}\label{equation_adjoint}
f_\bfb(x,y,z)=x\adj (y) z-z \adj (y)x,
\end{equation}
where $\adj(y)$ denotes the adjoint matrix of $y$.

As in Section~\ref{section_slices} write 
$$
p=\sum_{i=1}^{\bfa} a_i\ts w_i,
$$
where $w_1,\dots,w_{\bfa}\in W:=p({A}^\ast)\subset B\ts C$ are $\bfb\times\bfc$ matrices
and $\{a_1,\dots,a_\bfa\}$ is a basis of $A$.

\begin{prop}\label{prop_strassen_adj}
   Assume that $p\in A\ts B\ts C$. 
   \begin{enumerate}
      \item \cite{strassen_equations} \label{item_strassen_adj_3}
            Suppose $\bfa=\bfb=\bfc=3$.
            Then  $\br(p)\le3$ if and only if $f_3(x,y,z)=\underline{0}$ for every $x,y,z\in W$.
      \item \cite{landsbergmanivel08} \label{item_strassen_adj_more}
            Suppose  $\bfa=\bfb=\bfc$ and $\br(p)\le\bfa$.
            Then $f_\bfa(x,y,z)=\underline{0}$, for every $x,y,z\in W$.
\end{enumerate}
\end{prop}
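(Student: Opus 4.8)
The plan is to treat the two parts separately: part~\ref{item_strassen_adj_more} and the ``only if'' half of part~\ref{item_strassen_adj_3} are soft and go through a Zariski-closedness argument, while the ``if'' half of part~\ref{item_strassen_adj_3} is Strassen's theorem and carries all the weight. For part~\ref{item_strassen_adj_more}, I would first observe that the condition ``$f_\bfa(x,y,z)=\underline{0}$ for all $x,y,z\in W$'' cuts out a Zariski closed subset $Z\subseteq A\otimes B\otimes C$: writing $x=p(\alpha)$, $y=p(\beta)$, $z=p(\gamma)$, the matrix $f_\bfa(p(\alpha),p(\beta),p(\gamma))$ is polynomial in $p$ and in $\alpha,\beta,\gamma$ (the dependence on $y$ being polynomial of degree $\bfa-1$ through $\adj$), so its vanishing for all $\alpha,\beta,\gamma$ amounts to the simultaneous vanishing of finitely many polynomials in $p$. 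Since $\sigma_\bfa(\Seg)=\overline{\set{p\mid\br(p)\le\bfa}}$ and $Z$ is closed, it suffices to exhibit a subset of $Z$ that is dense in $\sigma_\bfa(\Seg)$. For $\bfa\ge2$ a general point of $\sigma_\bfa(\Seg)$ has rank exactly $\bfa$ and is of the form $p=\sum_{i=1}^{\bfa}a_i\otimes b_i\otimes c_i$ with $\setfromto{a_1}{a_\bfa}$, $\setfromto{b_1}{b_\bfa}$, $\setfromto{c_1}{c_\bfa}$ each a basis (and the case $\bfa\le1$ is trivial); such tensors are therefore dense. Taking $\setfromto{b_1}{b_\bfa}$ and $\setfromto{c_1}{c_\bfa}$ as coordinates, $W=p(A^\ast)$ is the space of diagonal matrices, and for diagonal $y$ the adjoint $\adj(y)$ is again diagonal; hence $x\adj(y)z$ and $z\adj(y)x$ are equal diagonal matrices and $f_\bfa$ vanishes on $W$. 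Thus this dense family lies in $Z$, so $\sigma_\bfa(\Seg)\subseteq Z$, which is part~\ref{item_strassen_adj_more}.

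The ``only if'' direction of part~\ref{item_strassen_adj_3} is exactly the case $\bfa=3$ of part~\ref{item_strassen_adj_more}. For the ``if'' direction I would follow Strassen \cite{strassen_equations}, working with a spanning triple $w_1,w_2,w_3$ of slices of $W$ and splitting according to whether $W$ contains an invertible matrix. If it does, normalise it so that $I\in W$; substituting $y=I$ into $f_3\equiv\underline{0}$ and using $\adj(I)=I$ gives $xz=zx$ for all $x,z\in W$, so $W$ is a space of pairwise commuting $3\times3$ matrices containing the identity. A commuting pair of matrices lies in the closure of the simultaneously diagonalisable pairs (irreducibility of the commuting variety), hence $p$ is a limit of tensors of the diagonal shape of the previous paragraph, so $\br(p)\le3$ (the reverse inequality on this locus being part~\ref{item_strassen_adj_more}). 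If $W$ contains no invertible matrix, one instead invokes the classification of at-most-three-dimensional spaces of singular $3\times3$ matrices via the Kronecker normal form of the associated pencil (equivalently, the case analysis of \cite{strassen_equations}): every such $p$ automatically satisfies $f_3\equiv\underline{0}$, and in each normal form one verifies directly that $\br(p)\le3$.

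I expect the main obstacle to be precisely this last case, where $W$ contains no invertible matrix: the clean ``commuting matrices'' description is unavailable, and border rank is genuinely not automatic there --- for instance $W=\gotso(3)$, the space of skew-symmetric $3\times3$ matrices, satisfies $f_3\not\equiv\underline{0}$ by a one-line computation with $\adj$ of a skew matrix, hence has border rank $>3$ by part~\ref{item_strassen_adj_more}. One must therefore run through the finitely many normal forms of such spaces and check the border rank bound in each; this is the technical heart of \cite{strassen_equations}, and in an exposition I would either reproduce that analysis or simply cite it, the remaining ingredients being soft (Zariski-closedness, a density argument, and irreducibility of the commuting variety).
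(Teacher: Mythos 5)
The paper does not prove Proposition~\ref{prop_strassen_adj}; it states it and refers the reader to \cite{strassen_equations}, \cite{landsbergmanivel08}, and \cite{friedland_salmon_problem_paper}. So there is no internal proof to compare against, and your proposal should be judged on its own.

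Your treatment of part~\ref{item_strassen_adj_more} is correct and complete: the vanishing of $f_\bfa$ on $W$ is a closed condition on $p$ (after eliminating $\alpha,\beta,\gamma$), the generic point of $\sigma_\bfa(\Seg)$ when $\bfa=\bfb=\bfc$ has diagonalisable slices, and $f_\bfa$ vanishes identically on a space of simultaneously diagonal matrices. Your sketch for the ``if'' direction of part~\ref{item_strassen_adj_3} also follows Strassen's original strategy faithfully --- specialise $y$ to a normalised invertible element to extract pairwise commutativity, pass to the closure of simultaneously diagonalisable families via the Motzkin--Taussky irreducibility of the commuting pair variety (with the triple reduced to a pair because $I$ is already diagonal), and handle the remaining singular pencils by Kronecker normal form.

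The one genuine flaw is a contradiction between your second and third paragraphs. In the second you assert that when $W$ contains no invertible matrix ``every such $p$ automatically satisfies $f_3\equiv\underline{0}$'', but in the third you exhibit $W=\gotso(3)$ --- a $3$-dimensional space of singular $3\times3$ matrices --- on which $f_3\not\equiv\underline{0}$, so the assertion is false as stated. What you presumably mean (and what Strassen actually does) is the contrapositive-flavoured version: one runs through the normal forms of at-most-$3$-dimensional spaces of singular $3\times3$ matrices, discards those on which $f_3\not\equiv\underline{0}$ (such as $\gotso(3)$, which therefore carries no burden of proof for this one-directional implication), and verifies $\br\le3$ directly for the rest. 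With that sentence repaired, the outline is a faithful account of the argument in \cite{strassen_equations}, and the technical load sits exactly where you say it does.
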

See also \cite[Thm~3.2]{friedland_salmon_problem_paper}.

We also recall Ottaviani's derivation of Strassen's equations (\cite{ottaviani_symplectic_bundles_secants_Luroth}, see also \cite[Sect.~3.8.1]{landsberg_tensorbook}) for secant varieties of three factor Segre embeddings.

Given a tensor $p:B^\ast\to A\ts C$,  consider the contraction operator
$$
p^{\wedge}_A:A\ts B^\ast\to\Lambda^2 A\ts C,
$$
obtained as composition of the map $ \Id_A\ts p:A\ts B^\ast\to A^{\ts2}\ts C$ with the natural  projection  $A^{\ts2}\ts C\to\Lambda^2 A\ts C$.

\begin{prop}[{\cite[Theorem 4.1]{ottaviani_symplectic_bundles_secants_Luroth}}]
\label{prop_ottaviani_derivation_strassen}
Assume $3\le\bfa\le \bfb,\bfc$.
If $\br(p)\le r$, then $\rk({p}^\wedge_{A})\le r(\bfa-1)$.
\end{prop}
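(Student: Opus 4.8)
The plan is to establish the inequality first for tensors of rank at most $r$ by a pure linear-algebra argument, and then extend it to border rank via a Zariski-closedness observation. The three ingredients are: $p \mapsto p^\wedge_A$ is a \emph{linear} map; the rank of a sum of linear maps is at most the sum of the ranks; and a single simple tensor contributes at most $\bfa-1$ to $\rk(p^\wedge_A)$.

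First I would observe that $\Id_A\ts p$ depends linearly on $p$ and that postcomposition with the fixed projection $A^{\ts 2}\ts C\to\Lambda^2 A\ts C$ is linear, so $p\mapsto p^\wedge_A$ is linear. Hence if $p=\sum_{i=1}^{r}q_i$ with each $q_i=a_i\ts b_i\ts c_i$ simple, then $p^\wedge_A=\sum_{i=1}^{r}(q_i)^\wedge_A$ and $\rk(p^\wedge_A)\le\sum_{i=1}^{r}\rk\big((q_i)^\wedge_A\big)$. Next, for a simple tensor $q=a\ts b\ts c$, regarded as the map $B^\ast\to A\ts C$, $\beta\mapsto\beta(b)\,a\ts c$, a direct computation shows that $q^\wedge_A\colon A\ts B^\ast\to\Lambda^2 A\ts C$ sends $a'\ts\beta$ to $\beta(b)\,(a'\wedge a)\ts c$. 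Therefore the image of $q^\wedge_A$ is contained in $(A\wedge a)\ts\langle c\rangle$, a subspace of $\Lambda^2 A\ts C$ of dimension $\bfa-1$ (the map $a'\mapsto a'\wedge a$ on $A$ has kernel $\langle a\rangle$). Thus $\rk\big((q_i)^\wedge_A\big)\le\bfa-1$ for each $i$, and combining with the previous step, $\rk(p^\wedge_A)\le r(\bfa-1)$ whenever $R(p)\le r$.

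Finally I would pass from rank to border rank. Fixing bases, the matrix of $q^\wedge_A$ has entries that are linear forms in the coordinates of $q$, so the locus $Z_r:=\{\,q\in A\ts B\ts C:\rk(q^\wedge_A)\le r(\bfa-1)\,\}$ is the vanishing locus of the $(r(\bfa-1)+1)\times(r(\bfa-1)+1)$ minors of that matrix, hence Zariski closed. By the previous paragraph every tensor of rank at most $r$ lies in $Z_r$, so $Z_r$ contains the Zariski closure $\sigma_r(\Seg)$ of that set, i.e.\ every tensor of border rank at most $r$; this is precisely the assertion. The argument is essentially formal, so there is no genuine obstacle: the only step calling for a bit of care is the simple-tensor computation identifying the image of $q^\wedge_A$. (Incidentally, the inequality holds with no restriction on $\bfa,\bfb,\bfc$; the hypothesis $3\le\bfa\le\bfb,\bfc$ only pins down the range in which the resulting equations cut out a proper subvariety.)
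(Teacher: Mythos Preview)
Your argument is correct and is exactly the standard proof of this result (linearity of $p\mapsto p^\wedge_A$, the bound $\rk(q^\wedge_A)\le\bfa-1$ for simple $q$, subadditivity of rank, then Zariski closure). The paper does not supply its own proof of this proposition but simply cites it from Ottaviani; your write-up matches the argument one finds in the cited reference and in Landsberg's book.
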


If $\bfa=3$, we can slice $p$ as follows (cf. Section~\ref{section_slices}):
$p=\sum_{i=1}^3a_i \ts w_i\in A\ts B\ts C$, with $w_i\in B\ts C$. 
Then the matrix representation of ${p}^\wedge_{A}$  in block matrices is the following $(\bfb+\bfb+\textbf{b},\bfc+\bfc+\bfc)$ partitioned matrix
\begin{equation}\label{equation_contraction_operator}
M_3(w_1,w_2,w_3):=\left(
\begin{array}{ccc}
\underline{0}&w_3&-w_2\\
-w_3&\underline{0}&w_1\\
w_2&-w_1&\underline{0}
\end{array}\right).
\end{equation}

\begin{prop}[{\cite[Prop.~7.6.4.4]{landsberg_tensorbook}}]\label{prop_strassen_degree9}
If $\bfa=\bfb=\bfc=3$, the degree nine equation 
\[
   \det({p}^\wedge_{A})=0
\]
defines  the variety $\sigma_4(\PP A\times \PP B\times\PP C)\subset\PP(A\ts B\ts C)$.
\end{prop}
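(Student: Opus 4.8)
The plan is to prove the two inclusions
\[
  \sigma_4(\PP A\times\PP B\times\PP C)\ \subseteq\ V\bigl(\det({p}^\wedge_A)\bigr)\ \subseteq\ \sigma_4(\PP A\times\PP B\times\PP C).
\]
When $\bfa=\bfb=\bfc=3$ the operator ${p}^\wedge_A$ is represented by the $9\times 9$ block matrix $M_3(w_1,w_2,w_3)$ of \eqref{equation_contraction_operator}, whose entries are linear in the coordinates of $p$; hence $\det({p}^\wedge_A)$ is a homogeneous polynomial of degree $9$ in $p$, and $\rk({p}^\wedge_A)\le 8$ holds exactly when $\det({p}^\wedge_A)=0$. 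Since $3\le\bfa=\bfb=\bfc$, Proposition~\ref{prop_ottaviani_derivation_strassen} with $r=4$ gives $\rk({p}^\wedge_A)\le 4(\bfa-1)=8$ for every $p$ with $\br(p)\le 4$; as $\sigma_4$ is exactly $\{\linspan{p}:\br(p)\le 4\}$, the first inclusion follows.

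For the reverse inclusion I would compare an irreducible hypersurface with a hypersurface. First, $\det({p}^\wedge_A)\not\equiv 0$: it suffices to exhibit one triple $(w_1,w_2,w_3)$ of $3\times 3$ matrices with $\det M_3(w_1,w_2,w_3)\ne 0$, which is a direct computation. Hence $V:=V(\det({p}^\wedge_A))$ is a genuine hypersurface, of pure dimension $25$, in $\PP(A\ts B\ts C)\simeq\PP^{26}$. On the other hand $\sigma_4$ is irreducible, being the secant variety of the irreducible Segre variety, and I would compute its dimension by Terracini's lemma: the affine tangent space to $\widehat{\sigma_4}$ at a general point $v_1+v_2+v_3+v_4$ with $v_i=a_i\ts b_i\ts c_i$ equals $\sum_{i=1}^4\bigl(A\ts b_i\ts c_i+a_i\ts B\ts c_i+a_i\ts b_i\ts C\bigr)$; each summand has dimension $7$, and a computation with $a_i,b_i,c_i$ in sufficiently general position shows the sum has dimension exactly $26$, one less than the naive value $\min(4\cdot 7,27)=27$. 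Thus $\dim\sigma_4=25$, so $\sigma_4$ is an irreducible hypersurface, and being contained in the pure $25$-dimensional variety $V$ it coincides with one of the irreducible components of $V$. To upgrade this to $\sigma_4=V$ it remains to see that $V$ is irreducible, equivalently that $\det({p}^\wedge_A)$ is an irreducible polynomial: this follows because $\det({p}^\wedge_A)$ is an $\SL(A)\times\SL(B)\times\SL(C)$-semi-invariant of degree $9$ on $\CC^3\ts\CC^3\ts\CC^3$, and the space of such semi-invariants in degree $9$ is one-dimensional, spanned by the irreducible generator of the corresponding invariant ring. Alternatively, one compares degrees: the irreducible equation cutting out the hypersurface $\sigma_4$ has degree $\deg\sigma_4$, divides $\det({p}^\wedge_A)$, and $\deg\sigma_4=9$ forces the two to agree up to scalar.

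The main obstacle is the defectivity statement $\dim\sigma_4=25$: the expected dimension of $\sigma_4(\PP^2\times\PP^2\times\PP^2)$ is $26$, i.e.\ the whole ambient space, and the point is precisely that the Terracini tangent space drops by exactly one. This is the classical defectivity of $\sigma_4$ going back to Strassen; carrying it out from scratch amounts to the explicit linear-algebra computation sketched above with four well-chosen rank one tensors. The other delicate ingredient is the irreducibility of $\det({p}^\wedge_A)$, needed to pass from ``$\sigma_4$ is a component of $V$'' to ``$\sigma_4=V$'', which rests on the structure of the $\SL_3\times\SL_3\times\SL_3$-invariants of $\CC^3\ts\CC^3\ts\CC^3$.
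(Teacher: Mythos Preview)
The paper does not supply its own proof of this proposition; it is quoted verbatim as \cite[Prop.~7.6.4.4]{landsberg_tensorbook} and used as a black box. So there is no ``paper's proof'' to compare against.

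Your outline is the standard one and is essentially correct in structure: one containment comes for free from Proposition~\ref{prop_ottaviani_derivation_strassen}, and for the other you reduce to showing that $\sigma_4$ is an irreducible hypersurface and that it exhausts the zero locus of $\det({p}^\wedge_A)$. You correctly flag the two nontrivial inputs: the defectivity $\dim\sigma_4=25$ (classical, due to Strassen), and the passage from ``$\sigma_4$ is a component of $V$'' to ``$\sigma_4=V$''. For the latter you propose either irreducibility of $\det({p}^\wedge_A)$ via the $\SL_3^{\times 3}$-invariant theory of $\CC^3\otimes\CC^3\otimes\CC^3$, or the degree computation $\deg\sigma_4=9$. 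Both are legitimate routes, but be aware that each is itself a substantial fact: the invariant ring in question is generated by invariants of degrees $6,9,12$ (Aronhold--type), and knowing that the degree-$9$ generator is irreducible is roughly as hard as the statement you are proving; likewise, computing $\deg\sigma_4=9$ directly (say via a Chern-class or resolution argument) is not a one-liner. So your write-up is an accurate roadmap rather than a self-contained proof, which you acknowledge. If you want to make it fully self-contained, the cleanest path is probably to verify $\deg\sigma_4=9$ by an explicit tangent-cone or elimination computation at a carefully chosen point, or to invoke the known description of the $\SL_3^{\times 3}$-invariants with a reference.
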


If $\bfa=4$ and $p=\sum_{i=1}^4a_i \ts w_i\in A\ts B\ts C$, with 
$w_i\in B\ts C$, then the matrix representation of ${p}^\wedge_{A}$  
in block matrices is the following $(4\cdot\bfb,6\cdot\bfc)$ 
partitioned matrix 
\begin{equation}\label{equation_contraction_operator_4}
M_4(w_1,w_2,w_3,w_4):=\left(
\begin{array}{cccccc}
\underline{0}&w_3&-w_2&w_4&\underline{0}&\underline{0}\\
-w_3&\underline{0}&w_1&\underline{0}&-w_4&\underline{0}\\
w_2&-w_1&\underline{0}&\underline{0}&\underline{0}&w_4\\
\underline{0}&\underline{0}&\underline{0}&-w_1&w_2&-w_3\\
\end{array}\right).
\end{equation}

\subsection{Case \texorpdfstring{$(3+1,2+\bfb'',2+\bfc'')$}{(3+1,2+b'',2+c'')}}\label{sec_br_3_2_2_1_b_c}

Assume $\bfa'=3$, $\bfb'=\bfc'=2$ and $\bfa''=1$.

\begin{prop}\label{prop_br_case_3_2_2_1_2_2}
  For any $p'\in \CC^3\otimes\CC^2\otimes \CC^2$ and $p''\in \CC^1\otimes\CC^{\bfb''}\otimes \CC^{\bfc''}$
     the additivity of the border rank holds.
\end{prop}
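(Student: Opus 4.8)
The plan is to reduce the problem to a configuration small enough that Ottaviani's derivation of Strassen's equations (Proposition~\ref{prop_ottaviani_derivation_strassen}) determines $\br(p'\oplus p'')$ exactly, to be matched against the trivial bound $\br(p'\oplus p'')\le\br(p')+\br(p'')$.

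\emph{Reductions.} Since $p''\in\CC^1\otimes\CC^{\bfb''}\otimes\CC^{\bfc''}$ is a single matrix, restricting $B''$ and $C''$ to its column and row spaces changes neither $\br(p'')$ nor $\br(p'\oplus p'')$ by Lemma~\ref{lem_rank_independent_of_ambient}, so I may assume $p''$ is concise; then $\bfb''=\bfc''=k:=\br(p'')$ and $p''$ is the identity matrix $\sum_{i=1}^{k}b_i\otimes c_i$ (for $k=0$ there is nothing to prove). Likewise, if $p'$ is not concise in $\CC^3\otimes\CC^2\otimes\CC^2$, I restrict it to its support, a concise tensor in some $\CC^{m}\otimes\CC^{n}\otimes\CC^{q}$; here $m\le 2$ (as $m=3$ would force $n=q=2$ and $p'$ concise), one checks $m\le\max(n,q)$, and $\br(p')=\max(m,n,q)$. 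Then $p=p'\oplus p''$ is concise in $\CC^{m+1}\otimes\CC^{n+k}\otimes\CC^{q+k}$, so for $k\ge1$ the flattenings give $\br(p)\ge\max(m+1,n+k,q+k)\ge\max(n,q)+k=\br(p')+k$, and additivity follows. So from now on $p'$ is concise in $\CC^3\otimes\CC^2\otimes\CC^2$, hence $\br(p')=3$ by Example~\ref{example_degeneration_in_322}, and $p\in\CC^4\otimes\CC^{2+k}\otimes\CC^{2+k}$; if $k=1$ then $p$ is concise in $\CC^4\otimes\CC^3\otimes\CC^3$, so $\br(p)\ge4=\br(p')+\br(p'')$ and we are done.

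\emph{The main case $k\ge2$.} Here $4\le 2+k$, so Proposition~\ref{prop_ottaviani_derivation_strassen} applies with $\bfa=4$: if $r:=\br(p)$ then $\rk(p^\wedge_A)\le 3r$. Picking a basis $a_1,a_2,a_3$ of $A'$ and $a_4$ of $A''$ and writing $p=\sum_{i=1}^{4}a_i\otimes w_i$, the slices $w_1,w_2,w_3$ are supported in the top-left $2\times2$ block of the $(2+k)\times(2+k)$ matrices, $w_4$ is the identity on the bottom-right $k\times k$ block, and $p^\wedge_A=M_4(w_1,w_2,w_3,w_4)$ as in~\eqref{equation_contraction_operator_4}. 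Decomposing the row space $\CC^4\otimes B$ and the column space $\CC^6\otimes C$ of $M_4$ along $B=\CC^2\oplus\CC^k$ and $C=\CC^2\oplus\CC^k$, the matrix $M_4$ becomes block diagonal, with no mixing of the $\CC^2$ and $\CC^k$ parts since each $w_i$ ($i\le 3$) vanishes off the $\CC^2\otimes\CC^2$ corner and $w_4$ vanishes off the $\CC^k\otimes\CC^k$ corner. The $\CC^k$-block equals $M_4(0,0,0,\mathrm{Id}_k)$, of rank $3k$; the $\CC^2$-block equals $M_4(w_1,w_2,w_3,0)$, itself block diagonal with blocks the $6\times6$ matrix $M_3(w_1,w_2,w_3)$ of~\eqref{equation_contraction_operator} and the $2\times6$ concatenation $[\,w_1\mid w_2\mid w_3\,]$. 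Hence
\[
   \rk(p^\wedge_A)=\rk M_3(w_1,w_2,w_3)+\rk[\,w_1\mid w_2\mid w_3\,]+3k .
\]
Conciseness of $p'$ forces $\rk[\,w_1\mid w_2\mid w_3\,]=\dim B'=2$, and a direct computation on the two $\GL\times\GL\times\GL$-orbit representatives of concise tensors in $\CC^3\otimes\CC^2\otimes\CC^2$ — the generic one, where $\rk M_3=6$, and the degenerate one of Example~\ref{example_degeneration_in_322}, where $\rk M_3=5$ — shows $\rk M_3(w_1,w_2,w_3)\ge5$. Therefore $3r\ge 5+2+3k=3k+7$, so $r\ge k+3=\br(p')+\br(p'')$; together with the reverse inequality this gives $\br(p)=\br(p')+\br(p'')$.

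The main obstacle is this last step: recognising that $M_4$ block-diagonalises and then extracting the bounds $\rk M_3(w_1,w_2,w_3)\ge5$ and $\rk[\,w_1\mid w_2\mid w_3\,]=2$ from the concrete structure of concise $3\times2\times2$ tensors. Ottaviani's inequality is only barely sharp enough here: keeping only $\rk M_3\ge4$, or discarding the contribution of $[\,w_1\mid w_2\mid w_3\,]$, would yield just $r\ge k+2$. The preceding reductions are routine, modulo the small dimension bookkeeping indicated above.
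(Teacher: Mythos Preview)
Your proof is correct and follows essentially the same route as the paper: both use Ottaviani's contraction operator $p^\wedge_A$ on $\CC^4\otimes\CC^{2+k}\otimes\CC^{2+k}$, block-diagonalise $M_4$ along the splitting $B=\CC^2\oplus\CC^k$, $C=\CC^2\oplus\CC^k$, and conclude $\rk M_4=\rk M_3(w_1',w_2',w_3')+2+3k\ge 7+3k$, forcing $\br(p)\ge k+3$. The only differences are cosmetic: the paper invokes Lemma~\ref{lemma_more_degenerate} to reduce to the single degenerate orbit of Example~\ref{example_degeneration_in_322} before computing $\rk M_3=5$, whereas you check both orbits directly (getting $\rk M_3\in\{5,6\}$); and you handle the non-concise and $k=1$ cases by explicit flattening bounds rather than by citing Corollary~\ref{cor_additivity_brank_very_small_a_b_c}, which is arguably cleaner since it sidesteps the hypothesis $\bfa\le\bfb,\bfc$ in Proposition~\ref{prop_ottaviani_derivation_strassen}.
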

\begin{proof}
  We can assume $p''$ is concise, so that $\br(p'')=\bfb''=\bfc''$.
  Also if $p'$ is not concise, then Corollary~\ref{cor_additivity_brank_very_small_a_b_c} shows the claim.
  So suppose $p'$ is concise and thus $\br(p')=3$.

We can write $p'=a_1\otimes w'_1+a_2\otimes w'_2+a_3\otimes w'_3$ and $p''=a_4\otimes w''_4$, where $w'_1\dots, w'_3$ are $2\times2$ matrices and $w_4''$ is an invertible $\bfb''\times \bfb''$ matrix.

As for $p'$, by Example~\ref{example_degeneration_in_322} and Lemma~\ref{lemma_more_degenerate} 
  we can choose the more degenerate tensor, which has the following normal form:
$$
 w'_1=\left(\begin{array}{cc} 1&0\\0&0\end{array}\right),
 w'_2=\left(\begin{array}{cc} 0&1\\0&0\end{array}\right),
 w'_3=\left(\begin{array}{cc} 0&0\\1&0\end{array}\right).
$$
Write $p=\sum_{i=1}^4 a_i\ts w_i$, where $w_i$ are the following $(2+\bfb'',2+\bfb'')$ partitioned matrices 
$$
w_i=\left(\begin{array}{cc} w'_i&\underline{0}\\ \underline{0}&\underline{0} \end{array}\right), i=1,2,3, \ 
w_4=\left(\begin{array}{cc} \underline{0}&\underline{0}\\
 \underline{0}&w''_4 \end{array}\right).
$$

We use the same notation as in Section~\ref{sec_Strassen_equations}. 
We claim that the matrix representing the contraction operator $p^\wedge_A$, 
   denoted by $M_4(w_1,w_2,w_3,w_4)$ as in \eqref{equation_contraction_operator_4}, has rank $7+3\bfb''$. 
We conclude that $\br(p) \ge 3+\bfb''=\br(p')+\br(p'')$
   by Proposition~\ref{prop_ottaviani_derivation_strassen} showing the addivitity.

In order to prove the claim, we observe that $M_4(w_1,w_2,w_3,w_4)$ can be transformed via permutations of rows and columns into the following $(6+3\bfb''+2+\bfb'',6+3\bfb''+2+2+2+3\bfb'')$-partitioned matrix
$$
\left(
\begin{array}{cccccc}
M_3(w'_1,w'_2,w'_3)&\underline{0}&\underline{0}&\underline{0}&\underline{0}&\underline{0}\\
\underline{0}&N&\underline{0}&\underline{0}&\underline{0}&\underline{0}\\
\underline{0}&\underline{0}&-w'_1&w'_2&-w'_3&\underline{0}\\
\underline{0}&\underline{0}&\underline{0}&\underline{0}&\underline{0}&\underline{0}\\
\end{array}\right),
$$
where $N$ is the following $3\bfb''\times3\bfb''$ matrix
$$
N=\left(\begin{array}{ccc}
w_4''&\underline{0}&\underline{0}\\
\underline{0}&-w_4''&\underline{0}\\
\underline{0}&\underline{0}&w_4''
\end{array}\right).
$$
One can compute that the rank of  $M_3(w'_1,w'_2,w'_3)$ equals $5$. 
Moreover, since $\rk(N)=3\bfb''$ and $\rk((-w'_1,w'_2,-w'_3))=2$, we conclude the proof of the claim.
\end{proof}

\subsection{Case \texorpdfstring{$(3+1,3+\bfb'',3+\bfc'')$}{(3+1,3+b'',3+c'')}}\label{sec_br_3_3_3_1_b_c}

Recall our usual setting: $p'\in A'\otimes B'\otimes C'$, $p''\in A''\otimes B''\otimes C''$,
$\bfa':=\dim A'$, etc.  (Notation~\ref{notation_2}).
In this subsection we are going to prove the following case of additivity of the border rank.
\begin{prop}\label{prop_br_case_3_3_3_1_1_1}
   The additivity of the border rank holds for $p'\oplus p''$ if $\bfa'=\bfb'=\bfc'=3$, and 
      $p'$ is concise and $\bfa''=1$.
\end{prop}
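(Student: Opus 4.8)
As always we may first reduce to the case where $p''$ is concise in $A''\otimes B''\otimes C''$: shrinking $B'',C''$ to the minimal subspaces does not change $\br(p'\oplus p'')$ by Lemma~\ref{lem_rank_independent_of_ambient}, and then $\bfb''=\bfc''$ and the single slice $w_4''$ of $p''$ is an invertible $\bfb''\times\bfb''$ matrix, so $\br(p'')=\bfb''$. If $\bfb''=0$ there is nothing to prove, so assume $\bfb''\ge 1$. Since $p'$ is concise in $\CC^3\otimes\CC^3\otimes\CC^3$ it is not contained in any $\CC^2\otimes\CC^3\otimes\CC^3$, hence $\br(p')\in\{3,4,5\}$. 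Because the inequality $\br(p'\oplus p'')\le\br(p')+\br(p'')$ always holds, it suffices to prove $\br(p'\oplus p'')\ge\br(p')+\bfb''$.

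The key move is to pass from the $4$-dimensional first factor to a $3$-dimensional one, where Ottaviani's contraction bound (Proposition~\ref{prop_ottaviani_derivation_strassen}) is sharper. Fix a line $L\subset A$ with $L\cap A'=0$ and $A''\not\subset L$ (for instance $L=\langle a_3+a_4\rangle$ for suitable basis vectors $a_3\in A'$, $a_4\in A''$), and let $\pi\colon A\to\bar A:=A/L$; then $A'\hookrightarrow A\to\bar A$ is an isomorphism, so $\bar\bfa=3$. Under this identification the projected tensor becomes
\[
  \pi(p)=p' + a_*\otimes w_4''\in \bar A\otimes (B'\oplus B'')\otimes(C'\oplus C''),
\]
with $a_*=\pi(a_4)\ne 0$, where $p'$ sits in the $\bar A\otimes B'\otimes C'$ block and $a_*\otimes w_4''$ in the $\bar A\otimes B''\otimes C''$ block; its $\bar A$-slices are the block-diagonal matrices $\bar w_i=\mathrm{diag}(w_i',\lambda_i w_4'')$, where $w_1',w_2',w_3'$ are the $A'$-slices of $p'$ and $a_*=\sum_i\lambda_i\bar a_i$. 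I would now use $\br(p)\ge\br(\pi(p))$ (border rank does not increase under projections) and bound $\br(\pi(p))$ from below via Proposition~\ref{prop_ottaviani_derivation_strassen}, which applies since $\bar\bfa=3\le\bfb=\bfc=3+\bfb''$. The contraction matrix $M_3(\bar w_1,\bar w_2,\bar w_3)$ from \eqref{equation_contraction_operator} is, after reordering the ``prime'' and ``bis'' rows and columns, block-diagonal: the prime--prime block is $M_3(w_1',w_2',w_3')$, which equals $(p')^{\wedge}_{A'}$, the bis--bis block is $M_3(\lambda_1 w_4'',\lambda_2 w_4'',\lambda_3 w_4'')=\Lambda\otimes w_4''$ with $\Lambda=\big(\begin{smallmatrix}0&\lambda_3&-\lambda_2\\-\lambda_3&0&\lambda_1\\\lambda_2&-\lambda_1&0\end{smallmatrix}\big)$, and the off-diagonal blocks vanish. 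Since $a_*\ne 0$ the skew-symmetric matrix $\Lambda$ has rank $2$, and $w_4''$ is invertible, so
\[
  \rk\big(\pi(p)^{\wedge}_{\bar A}\big)=\rk\big((p')^{\wedge}_{A'}\big)+2\bfb''.
\]

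To finish, I would combine this with the following fact about concise tensors in $\CC^3\otimes\CC^3\otimes\CC^3$: $\rk\big((p')^{\wedge}_{A'}\big)\ge 2\br(p')-1$, equivalently $\big\lceil \rk((p')^{\wedge}_{A'})/2\big\rceil=\br(p')$. This is exactly the statement that the border rank of a $3\times 3\times 3$ tensor is read off Strassen's/Ottaviani's equations: $\br(p')=5$ forces $\det((p')^{\wedge}_{A'})\ne 0$ by Proposition~\ref{prop_strassen_degree9}, i.e.\ rank $9$; $\br(p')=4$ forces $p'\notin\sigma_3$, whence rank $\ge 7$ because the $7\times7$ minors of $M_3$ cut out $\sigma_3$; and $\br(p')=3$ forces $p'\notin\sigma_2$ for a concise $p'$, whence rank $\ge 5$. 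Granting this, Proposition~\ref{prop_ottaviani_derivation_strassen} gives
\[
  \br(p)\ \ge\ \br(\pi(p))\ \ge\ \Big\lceil \tfrac{\rk(\pi(p)^{\wedge}_{\bar A})}{2}\Big\rceil
  \ =\ \Big\lceil \tfrac{\rk((p')^{\wedge}_{A'})}{2}\Big\rceil+\bfb''\ =\ \br(p')+\bfb'',
\]
which is what we wanted. The main obstacle is precisely the last ingredient, the lower bound $\rk((p')^{\wedge}_{A'})\ge 2\br(p')-1$ for \emph{concise} $p'$: the cases $\br(p')\in\{4,5\}$ follow cleanly from the cited defining equations of $\sigma_3$ and $\sigma_4$, but the case $\br(p')=3$ requires knowing that conciseness already forces $\rk((p')^{\wedge}_{A'})\ge 5$ (equivalently, that $\{\rk M_3\le 4\}$ consists of non-concise tensors), which should be argued via the description of $\sigma_2$ of the triple Segre variety.
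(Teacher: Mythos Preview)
Your projection-plus-Ottaviani argument is exactly the paper's method for the case $\br(p')=5$, and your block computation $\rk(\pi(p)^\wedge_{\bar A})=\rk\bigl((p')^\wedge_{A'}\bigr)+2\bfb''$ matches the paper's. The paper, however, does \emph{not} attempt to push this single mechanism through all three border-rank cases; it splits, handling $\br(p')=3$ by Lemma~\ref{lemma_additivity_small_brank} and $\br(p')=4$ by a separate argument via the adjoint form of Strassen's equations.

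You have the obstacle backwards. The case $\br(p')=3$ that you flag as problematic is in fact the easy one: since $p'$ is concise with $\br(p')=\bfa'=3$ and $p''$ is concise with $\br(p'')=\bfa''=1$, Lemma~\ref{lemma_additivity_small_brank} gives additivity immediately, with no need to bound $\rk\bigl((p')^\wedge_{A'}\bigr)$. The genuine gap is at $\br(p')=4$. You assert that ``the $7\times7$ minors of $M_3$ cut out $\sigma_3$'', but that is \emph{not} what Proposition~\ref{prop_strassen_adj}\ref{item_strassen_adj_3} says: it cuts out $\sigma_3$ by the vanishing of $f_3(x,y,z)=x\adj(y)z-z\adj(y)x$, a different system of (degree-$4$) equations, and nothing in the paper supplies the implication $\rk M_3\le 6\Rightarrow\br\le 3$ that your inequality $\rk\bigl((p')^\wedge_{A'}\bigr)\ge 7$ would require. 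So the case you call ``clean'' is the one not justified by the cited results.

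The paper avoids this issue for $\br(p')=4$ by working directly with the adjoint equations rather than the Ottaviani matrix: assuming for contradiction $\br(p)\le\bfb''+3$, Proposition~\ref{prop_strassen_adj}\ref{item_strassen_adj_more} forces $f_{\bfb''+3}\equiv 0$ on $W$; one then checks that $\adj(y'+y'')$ is block-diagonal with blocks $\det(y'')\adj(y')$ and $\det(y')\adj(y'')$, so evaluating $f_{\bfb''+3}(x',y'+y'',z')$ and using $\det(y'')\ne 0$ yields $f_3(x',y',z')=0$ on $W'$, contradicting $\br(p')=4$ via Proposition~\ref{prop_strassen_adj}\ref{item_strassen_adj_3}. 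Your unified approach is more elegant if the missing set-theoretic statement about the $7\times7$ minors can be supplied, but as written it rests on an uncited lemma precisely where you thought it was solid.
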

\begin{proof}
   By replacing $B''$ and $C''$ with smaller spaces
      we can assume $p''$ is also concise and in particular $\bfb''=\bfc''$.
   If $\br(p') = 3$ then Lemma~\ref{lemma_additivity_small_brank} implies the claim. 
   On the other hand, by Terracini's Lemma, $\br(p')\le 5$.
   Thus it is sufficient to treat the cases $\br(p')=4$ and $\br(p')=5$.

   Let $\{a_1,a_2,a_3\}$ be a basis of $A'$ and let $\{a_4\}$ be a basis of $A''\simeq \CC$. 
   Write 
    \begin{equation}\label{equation_p'_(3,3,3)}
       p'=a_1\ts w'_1+a_2\ts w'_2+a_3\ts w'_3,
    \end{equation}
   where $w'_1,w'_2,w'_3\in W':=p'((A')^\ast)\subset B'\ts C'$ are $3\times3$ matrices.
   Similarly, let 
   $$
      p=a_1\ts w_1+a_2\ts w_2+a_3\ts w_3+a_4\ts w_4,
   $$
   where $w_1,w_2,w_3,w_4\in W:=p({A}^\ast)\subset B\ts C$ are $(3+\bfb'',3+\bfb'')$ partitioned matrices:
   \begin{equation}\label{equation_slicing_border}
      w_i=\left(
      \begin{array}{cc}
         w'_i          & \underline{0} \\
         \underline{0} &            0  \\
      \end{array}\right), \ i=1,2,3, \text{ and }
      w_4=\left(
      \begin{array}{cc}
        \underline{0}  & \underline{0}\\
        \underline{0}  &            w_4''
      \end{array}\right).
    \end{equation}
    
We now analyse the two cases $\br(p') =4$ and $\br(p') =5$ separately.
    
\paragraph{The additivity holds if the border rank of $p'$ is equal to four}
Assume by contradiction that $\br(p)\le \bfb''+3 = \br(p')+\br(p'') -1$.
By Proposition~\ref{prop_strassen_adj}\ref{item_strassen_adj_more}, 
   we obtain the following equations: $f_{\bfb''+3}(x',y'+y'',z')=\underline{0}$,
   for every $x',y',z'\in W'= p'\left((A')^*\right)$ and $0\ne y''\in  W''= p''\left((A'')^*\right)$.
We can see that $\adj(y'+y'')$ is the following $(3+\bfb'',3+\bfb'')$ partitioned matrix
$$
\adj(y'+y'')=
\left(
\begin{array}{cc}
\det(y'') \adj (y')& \underline{0} \\
\underline{0}& \det(y')\adj(y'')\\
\end{array}\right).
$$
Therefore we have 
$$
x' \adj(y'+y'') z' =
\left(
\begin{array}{cc}
\det(y'') x'\adj (y')z'& \underline{0} \\
\underline{0}& 0 \\
\end{array}\right).
$$
Since $p''$ is concise, $\det(y'')\ne 0$, and 
   thus from the vanishing of $f_{\bfb''+3}(x', y'+y'', z')$ we also obtain that $f_3(x',y',z')=0$.
Therefore $\br(p')\le 3$ by Proposition~\ref{prop_strassen_adj}\ref{item_strassen_adj_3}, a contradiction.

\paragraph{The additivity holds if the border rank of $p'$ is equal to five}

Consider the projection $\pi: A\ts B\ts C \to A'\ts B\ts C$ given by 
\begin{align*}
a_i&\mapsto a_i,\ i=1,2,3\\
a_4&\mapsto a_1+a_2+a_3.
\end{align*}  
Consider $\bar{p}:=\pi(p)\in A'\ts B\ts C$  and write $\bar{p}=a_1\otimes \bar{w}_1+a_2\otimes \bar{w}_2+a_3\otimes \bar{w}_3$, where, for $i=1,2,3$, 
$\bar{w}_i$ is the $(3+\bfb'',3+\bfb'')$ partitioned matrix
 $$
\bar{w}_i=\left(\begin{array}{cc}
w'_i&0\\
0&w''_4
\end{array}\right).
$$
We claim that $\rk({\bar{p}}^\wedge_{A'})=9+2\bfb''$.
Indeed, by swapping both rows and columns of $M_3(\bar{w}_1,\bar{w}_2,\bar{w}_3)$ 
   (see Equation~\ref{equation_contraction_operator}) 
   we obtain the following $(9+3\bfb'',9+3\bfb'')$ partitioned matrix
$$
\left(\begin{array}{cc}
{p'}^\wedge_{A'}&\underline{0}\\
\underline{0}&M_3(w_4'',w_4'',w_4'')
\end{array}\right).
$$
Since $\br(p')=5$, the matrix ${p'}^\wedge_{A'}$ has rank $9$, by Proposition~\ref{prop_strassen_degree9}. Moreover  $M_3(w_4'',w_4'',w_4'')$ has rank $2\bfb''$.
Therefore, by Proposition~\ref{prop_ottaviani_derivation_strassen}, we obtain $\br(\bar{p})\ge 5+ \bfb''$.
We conclude by observing that $\br(p)\ge \br(\bar{p})$.
\end{proof}

This concludes the proof of Theorem~\ref{thm_additivity_border_rank_in_dimension_4}, as all possible splittings 
$\bfa= \bfa'+\bfa''$, $\bfb= \bfb'+\bfb''$, $\bfc= \bfc'+\bfc''$ with $\bfa, \bfb,\bfc\le 4$
  are covered either by Corollary~\ref{cor_additivity_brank_very_small_a_b_c}
  or one of Propositions~\ref{prop_br_case_3_2_2_1_2_2} or \ref{prop_br_case_3_3_3_1_1_1}.

   \begin{table}[htb]
   \[
    \begin{array}{|c|c|c|c|c|}
    \hline
     \# & (\bfa', \bfb', \bfc') & (\bfa'', \bfb'', \bfc'') & \br(p') & \br(p'') \\
     \hline
 1. &  3, 2, 2 &  2, 3, 2 &        3 & 3 \\
 2. &  3, 3, 2 &  2, 2, 3 &        3 & 3 \\
 3. &  3, 3, 3 &  2, 2, 2 &     4, 5 & 2 \\
 4. &  4, 2, 2 &  1, 2, 2 &        4 & 2 \\
 5. &  4, 2, 2 &  1, 3, 3 &        4 & 3 \\
 6. &  4, 3, 2 &  1, 2, 2 &        4 & 2 \\
 7. &  4, 3, 3 &  1, 1, 1 &        5 & 1 \\
 8. &  4, 3, 3 &  1, 2, 2 &        5 & 2 \\
 9. &  4, 4, 3 &  1, 1, 1 &     5, 6 & 1 \\
10. &  4, 4, 4 &  1, 1, 1 &  5, 6, 7 & 1 \\
\hline
    \end{array}
   \]
    \caption{The list of pairs of concise tensors and their border ranks that should be checked to determine 
                the additivity of the border rank for $\bfa, \bfb, \bfc \le 5$. 
             This list contains all pairs of concise tensors not covered by 
                 Corollary~\ref{cor_additivity_brank_very_small_a_b_c}, 
                or Proposition~\ref{prop_br_case_3_2_2_1_2_2},
                or Proposition~\ref{prop_br_case_3_3_3_1_1_1},
                together with their possible border ranks,
                excluding the cases covered by Lemma~\ref{lemma_additivity_small_brank}.   
             The maximal possible values of border ranks above have been obtained 
                from~\cite[Sect.~4]{abo_ottaviani_peterson_segre}.}   \label{tab_cases_for_br_up_to_dim_5}
\end{table}

One could analyse the additivity for $\bfa, \bfb, \bfc \le 5$ 
   (so for the bound one more than in Theorem~\ref{thm_additivity_border_rank_in_dimension_4}) 
by checking all 10 possible cases listed in Table~\ref{tab_cases_for_br_up_to_dim_5}.
We conclude the article by solving also Case $3$ from the table.
\begin{example}
    If $p'\in \CC^3\otimes \CC^3\otimes \CC^3$ and $p''\in \CC^2\otimes \CC^2\otimes \CC^2$ are both concise, 
      then the additivity of the border rank holds for $p'\oplus p''$.
    Indeed, by Example~\ref{example_222_degenerates_to_122} 
      there exists $q''\in \CC^1\otimes \CC^2\otimes \CC^2$ more degenerate than $p''$, 
      but of the same border rank. 
    By Lemma~\ref{lemma_more_degenerate} it is enough to prove the additivity for $p'\oplus q''$. 
    This is provided by Proposition~\ref{prop_br_case_3_3_3_1_1_1}.
\end{example}

\section*{Acknowledgments}

We are enormously grateful to Joseph Landsberg for introducing us 
   to this topic and numerous discussions and explanations. 
We also thank Michael Forbes, Mateusz Micha{\l}ek, Artie Prendergast-Smith, Zach Teitler, and Alan Thompson 
   for reference suggestions and their valuable comments.
We are also greatful the referees and the journal editors for their suggestions that have helped to improve the results and presentation.

The research on this project was spread across a wide period of time.
It commenced once E.~Postinghel was a postdoc at IMPAN in Warsaw (Poland, 2012-2013) under the project ``Secant varieties, computational complexity, and toric degenerations'' realised within the Homing Plus programme of Foundation for Polish Science, cofinanced from European Union, Regional Development Fund.

Also our collaboration in years 2014-2019 was possible during many meetings, 
   in particular those that were related to special programmes, such as:
   the thematic semester ``Algorithms and Complexity in Algebraic Geometry'' at 
      Simons Institute for the Theory of Computing (2014), 
   the Polish Algebraic Geometry mini-Semester (miniPAGES, 2016),
   and the thematic semester Varieties: Arithmetic and Transformations (2018).
   The latter two events were supported by the grant 346300 for IMPAN
      from the Simons Foundation and the matching 2015-2019 Polish MNiSW fund.
   We are grateful to the participants of these semesters for numerous inspiring discussions 
      and to the sponsors for supporting our participations.
      
     We are grateful to Loughborough University for hosting our collaboration in May 2017 and to Copenhagen University for hosting us in January 2019.
     
In addition, J.~Buczy{\'n}ski is supported by 
    the Polish National Science Center project ``Algebraic Geometry: Varieties and Structures'', 
       2013/08/A/ST1/00804, 
    the scholarship ``START'' of the Foundation for Polish Science 
    and a scholarship of Polish Ministry of Science. 
    
    E.~Postinghel was supported by a grant of the Research Foundation - Flanders (FWO) between 2013-2016 and is supported by the EPSRC grant no.~EP/S004130/1 from late 2018.

Finally, the paper is also a part of the activities of the AGATES research group.

\bibliography{references}
\bibliographystyle{siamplain}

\end{document}